\numberwithin{equation}{section}
\newtheorem{theorem}{Theorem}[section]
\newtheorem{corollary}[theorem]{Corollary}
\newtheorem{lemma}[theorem]{Lemma}
\newtheorem{proposition}[theorem]{Proposition}
\newtheorem{question}[theorem]{Question}
\theoremstyle{definition}
\newtheorem{definition}[theorem]{Definition}
\theoremstyle{remark}
\newtheorem{remark}[theorem]{Remark}
\newcommand{\C}{\mathbb{C}}
\newcommand{\N}{\mathbb{N}}
\newcommand{\Q}{\mathbb{Q}}
\newcommand{\R}{\mathbb{R}}
\newcommand{\Z}{\mathbb{Z}}
\newcommand{\TT}{\mathcal{T}}
\newcommand{\CC}{\mathfrak{C}}
\newcommand{\XX}{\mathfrak{X}}
\newcommand{\lkd}{{\rm lk}^{\downarrow}}
\DeclareMathOperator{\AAut}{AAut}
\DeclareMathOperator{\Aff}{Aff}
\DeclareMathOperator{\Area}{Area}
\DeclareMathOperator{\Aut}{Aut}
\DeclareMathOperator{\BS}{BS}
\DeclareMathOperator{\BB}{BB}
\DeclareMathOperator{\diag}{diag}
\DeclareMathOperator{\F}{F}
\DeclareMathOperator{\FP}{FP}
\DeclareMathOperator{\Flag}{Flag}
\DeclareMathOperator{\GL}{GL}
\DeclareMathOperator{\id}{id}
\DeclareMathOperator{\HNN}{HNN}
\DeclareMathOperator{\Homeo}{Homeo}
\DeclareMathOperator{\SL}{SL}
\newcommand{\defeq}{\mathrel{\mathop{:}}=}
\renewcommand{\epsilon}{\varepsilon}
\title[Simple groups separated by
homological finiteness properties]{Infinitely presented simple groups separated by
homological finiteness properties}
\author[C. Llosa Isenrich]{Claudio Llosa Isenrich}
\address{Karlsruhe Institute of Technology, Englerstr.\ 2, 76131 Karlsruhe, Germany}
\email{claudio.llosa@kit.edu}
\author[E. Schesler]{Eduard Schesler}
\address{Karlsruhe Institute of Technology, Englerstr.\ 2, 76131 Karlsruhe, Germany}
\email{eduardschesler@googlemail.com}
\author[X. Wu]{Xiaolei Wu}
\address{Shanghai Center for Mathematical Sciences, Jiangwan Campus, Fudan University, No.2005 Songhu Road, Shanghai, 200438, P.R. China}
\email{xiaoleiwu@fudan.edu.cn}
\subjclass[2010]{Primary 20E32; Secondary 20E08}
\keywords{Simple groups, finiteness properties}
\begin{document}
\begin{abstract}
Given a finitely generated linear group $G$ over $\Q$, we construct a simple group $\Gamma$ that has the same finiteness properties as $G$ and admits $G$ as a quasi-retract.
As an application, we construct a simple group of type $\FP_{\infty}$ that is not finitely presented.
Moreover we show that for every $n \in \N$ there is a simple group of type $\FP_n$ that is neither finitely presented nor of type $\FP_{n+1}$.
Since our simple groups arise as R\"over--Nekrashevych groups, this answers a question of Zaremsky.
\end{abstract}
\maketitle

\section{Introduction}

A group is of type $\F_n$ (resp.\ $\F$) if it admits a classifying space with finite $n$-skeleton (resp.\ finite skeleton).
These so-called (homotopical) finiteness properties were introduced by Wall~\cite{Wall65} and generalize the properties of being finitely generated, which is equivalent to being of type $\F_1$, and being finitely presented, which is equivalent to being of type $\F_2$.
Since then, finiteness properties are typically among the first properties that one aims to determine when one is interested in a group.
Apart from a few notable exceptions~\cite{BuxKoehlWitzel13}, it is usually the case that a finitely presented group that arises in nature, is of type $\F_{\infty}$, i.e.\ of type $\F_n$ for every $n$.  Thus, it is common to say that groups which are not $\F_{\infty}$ have \emph{exotic finiteness properties}. The first examples of finitely presented groups of type $\F_n$ and not $\F_{n+1}$ were constructed by Stallings for $n=2$ \cite{Stallings63} and Bieri for all $n\geq 3$ \cite{Bieri1976}. Since then examples of groups of type $\F_n$ and not $\F_{n+1}$ have been produced with various additional properties, see e.g.~\cite{SkipperWitzelZaremsky19,Schesler23Sigma,LlosaIsenrichPy24} for some recent instances of this.
Even more exotic finiteness properties can be obtained by considering a homological analogue of the properties $\F_n$.
These were introduced by Bieri~\cite{Bieri81}, who defined a group to be of type $\FP_n(R)$ (resp.\ $\FP(R)$) for a commutative ring $R$, if the trivial $RG$-module $R$ admits a resolution by projective $RG$-modules that is finitely generated up to degree $n$ (resp.\ finitely generated in all degrees and of finite length).
It can be easily verified that a group of type $\F_n$ is also of type $\FP_n(R)$ for every $R$ and that the properties $\F_1$ and $\FP_1(R)$ coincide as soon as $R \neq 0$.
However, the question whether $\FP_n(R)$ implies $\F_n$ for $R \neq 0$ remained open until the seminal work of Bestvina and Brady~\cite{BestvinaBrady97}, in which they constructed a group of type $\FP(\Z)$ that is not of type $\F_2$.
Moreover they showed that for every $n \in \N$ there exists a group of type $\FP_n(\Z)$ that is neither of type $\FP_{n+1}(\Z)$ nor of type $\F_2$.
Recall that the restriction to groups that are not of type $\F_2$ makes the situation more interesting here since for groups of type $\F_2$ the properties $\F_n$ and $\FP_n(\Z)$ coincide.
Until today, the known sources of infinitely presented groups of type $\FP_2(\Z)$ are very limited, see e.g.~\cite{KrophollerLearySoroko20,Kropholler22}, and there is barely a naturally occurring class of groups known that contains such groups.
The following result in particular tells us that simple groups are among these classes and that in fact simple groups admit the same variety of finiteness properties as the groups of Bestvina and Brady.
To make this result more precise, we recall that the flag complex $\Flag(\Gamma)$ of a graph $\Gamma$ is the simplicial complex, with $k$-simplices the $(k+1)$-cliques in $\Gamma$.
We will further denote by $V(\Gamma)$ and $E(\Gamma)$ the vertex and edge sets of $\Gamma$.

\begin{theorem}\label{mainthm:BB-type-simple-groups}
For every finite graph $\Gamma$ there is a simple group $G_{\Gamma}$ with the following finiteness properties, where $R\neq 0$ is any commutative unital ring:
    \begin{enumerate}
        \item $G_{\Gamma}$ is of type $\F_n$ if and only if $\Flag(\Gamma)$ is $(n-1)$-connected;
        \item $G_{\Gamma}$ is of type $\FP_n(R)$ if and only if $\Flag(\Gamma)$ is homologically $(n-1)$-connected over $R$;
        \item $G_{\Gamma}$ is $\F_\infty$ if and only if $\Flag(\Gamma)$ is contractible;
        \item $G_{\Gamma}$ is $\FP_\infty(R)$ if and only if $\Flag(\Gamma)$ is $R$-acyclic.
    \end{enumerate}
\end{theorem}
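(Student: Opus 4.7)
My strategy is to apply the paper's main construction (as announced in the abstract) to the Bestvina--Brady group $H_\Gamma$ associated to $\Flag(\Gamma)$, namely the kernel of the homomorphism $A_\Gamma \to \Z$ from the right-angled Artin group on $\Gamma$ sending every standard generator to $1$. The celebrated theorem of Bestvina and Brady states exactly that $H_\Gamma$ is of type $\F_n$ (resp.\ $\FP_n(R)$) if and only if $\Flag(\Gamma)$ is $(n-1)$-connected (resp.\ homologically $(n-1)$-connected over $R$), and the $\F_\infty$ and $\FP_\infty$ versions follow because $\Flag(\Gamma)$ is finite-dimensional. Thus the finiteness properties of $H_\Gamma$ are already precisely those demanded of $G_\Gamma$, and the remaining task is to replace $H_\Gamma$ by a simple group with the same finiteness properties.

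To invoke the main construction, I would verify that $H_\Gamma$ is a finitely generated linear group over $\Q$ whenever $\Flag(\Gamma)$ is connected. Linearity is inherited from $A_\Gamma$, which embeds into some $\GL_n(\Z)$ via the standard faithful representation of a RAAG (e.g.\ Humphries' or Hsu--Wise's embedding). Finite generation of $H_\Gamma$ corresponds precisely to connectedness of $\Flag(\Gamma)$, which is the case $n=1$ of the claimed equivalence. The paper's main construction then produces a simple group $G_\Gamma$ with the same finiteness properties as $H_\Gamma$ (in fact admitting $H_\Gamma$ as a quasi-retract), and combining this with the Bestvina--Brady characterization yields the four equivalences in the statement for connected $\Flag(\Gamma)$.

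The main obstacle I foresee is the case that $\Flag(\Gamma)$ is disconnected, since then $H_\Gamma$ is infinitely generated and the construction does not apply directly to it. However in this range all four equivalences require $G_\Gamma$ to fail $\F_n$ and $\FP_n(R)$ for every $n\geq 1$, so it is enough to exhibit any infinitely generated simple group; a clean choice is a direct limit of the simple groups produced by the main construction applied to Bestvina--Brady groups of full subgraphs of $\Gamma$ corresponding to the connected components of $\Flag(\Gamma)$, with ascent induced by inclusions of components. The genuinely non-trivial input is thus the interaction of the main construction with the Bestvina--Brady characterization in the connected case, and once the quasi-retract statement of the abstract's main theorem is available as a black box, the transfer of finiteness properties in both directions is immediate.
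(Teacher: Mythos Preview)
Your proposal is correct and follows essentially the same approach as the paper: reduce to the Bestvina--Brady group $\BB_\Gamma$, invoke its linearity over $\Q$, and apply Theorem~\ref{introthm:linear-groups-as-retracts} to obtain a simple group with the same finiteness properties. Two minor cosmetic differences: the paper obtains linearity via the Davis--Januszkiewicz embedding of $A_\Gamma$ into a right-angled Coxeter group (whose integral cosine matrix gives linearity over $\Z$) rather than a direct RAAG representation, and for disconnected $\Flag(\Gamma)$ the paper simply notes that the existence of some infinitely generated simple group is well known---your proposed direct-limit construction is unnecessary (and, as stated, ill-defined, since distinct connected components admit no inclusions between them).
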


It is well-known that for every $n\geq 1$ one can choose $\Gamma$ so that $\Flag(\Gamma)$ is homologically $(n-1)$-connected over $\mathbb{Z}$, but not homologically $n$-connected over $\mathbb{Z}$ and not simply connected.
There are also examples of finite acyclic complexes that are not simply connected \cite{BestvinaBrady97}. Thus, we obtain the following direct consequences of \Cref{mainthm:BB-type-simple-groups}.

\begin{corollary}\label{introcor:BB-type-simple-groups-n-not-n+1}
For every $n \in \N$ there is a simple group of type $\FP_{n}(\mathbb{Z})$ that is neither of type $\FP_{n+1}(\mathbb{Z})$ nor finitely presented.
\end{corollary}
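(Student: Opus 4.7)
The plan is to apply Theorem~\ref{mainthm:BB-type-simple-groups} with $R = \Z$. Unpacking parts (1) and (2), it suffices to exhibit, for each $n \in \N$ (with $n \geq 1$), a finite graph $\Gamma$ such that $\Flag(\Gamma)$ is homologically $(n-1)$-connected over $\Z$, fails to be homologically $n$-connected over $\Z$, and is not simply connected. These three conditions translate respectively into $G_\Gamma$ being of type $\FP_n(\Z)$, not of type $\FP_{n+1}(\Z)$, and not of type $\F_2$ (equivalently, not finitely presented, using that finite presentability and type $\F_2$ coincide).

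For $n=1$, I would take $\Gamma$ to be a cycle of length at least $4$, so that $\Flag(\Gamma)$ is homeomorphic to $S^1$: it is connected (so $G_\Gamma$ is finitely generated), has $H_1 \cong \Z$, and is not simply connected. For $n \geq 2$, I would start from a finite acyclic $2$-dimensional simplicial complex $Y$ with nontrivial fundamental group; such complexes are classical and are precisely the ingredient used by Bestvina--Brady in \cite{BestvinaBrady97}. Form the wedge $Y \vee S^n$, triangulate it as a finite simplicial complex, and let $\Gamma$ be the $1$-skeleton of its (second) barycentric subdivision, so that $\Flag(\Gamma)$ is the subdivision itself and is homotopy equivalent to $Y \vee S^n$. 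A direct computation of reduced homology and of $\pi_1$ of a wedge then shows that $\Flag(\Gamma)$ is $(n-1)$-acyclic over $\Z$, has $\tilde H_n \cong \Z$, and has fundamental group $\pi_1(Y) \neq 1$.

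Plugging this graph $\Gamma$ into Theorem~\ref{mainthm:BB-type-simple-groups} then immediately yields a simple group $G_\Gamma$ with all three asserted properties. The only non-formal ingredient, which would be the main obstacle in principle, is the existence of finite acyclic $2$-complexes with nontrivial fundamental group; however, this is classical (cf.\ the remark preceding the statement) and not something I would expect to reprove here. Hence the corollary is essentially immediate from the main theorem.
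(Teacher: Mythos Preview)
Your proposal is correct and follows essentially the same approach as the paper: the paper simply states that the existence of a finite graph $\Gamma$ with $\Flag(\Gamma)$ homologically $(n-1)$-connected over $\Z$, not homologically $n$-connected, and not simply connected is well-known, and then deduces the corollary directly from Theorem~\ref{mainthm:BB-type-simple-groups}. You spell out an explicit construction of such a $\Gamma$ (a long cycle for $n=1$, and the barycentric subdivision of $Y \vee S^n$ with $Y$ a finite acyclic non-simply-connected $2$-complex for $n \geq 2$), which is more detail than the paper provides but is exactly the standard justification underlying the paper's appeal to \cite{BestvinaBrady97}.
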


\begin{corollary}\label{introcor:BB-type-simple-groups-infty}
There is a simple group of type $\FP_\infty(\mathbb{Z})$ that is not finitely presented.
\end{corollary}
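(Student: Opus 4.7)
The plan is to deduce the corollary directly from \Cref{mainthm:BB-type-simple-groups}, which is essentially a matter of exhibiting a finite graph $\Gamma$ whose flag complex has the right topology. I would proceed in two short steps.

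First, I would invoke the well-known existence of a finite flag simplicial complex $X$ that is $\Z$-acyclic but not simply connected. A classical source is \cite{BestvinaBrady97} (cited in the excerpt just before the corollary): one can for instance take the presentation $2$-complex of a balanced presentation of a perfect non-trivial group, e.g.\ a binary icosahedral type presentation, and then pass to a flag triangulation of it by subdividing edges and $2$-cells sufficiently. The resulting complex is still $\Z$-acyclic (acyclicity is a homotopy invariant) and its fundamental group is unchanged, so it remains non-simply connected. I would then let $\Gamma$ be the $1$-skeleton of this flag complex, so that $\Flag(\Gamma) = X$.

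Second, I would feed this graph $\Gamma$ into \Cref{mainthm:BB-type-simple-groups}. By part (4), the simple group $G_{\Gamma}$ is of type $\FP_\infty(\Z)$ because $\Flag(\Gamma) = X$ is $\Z$-acyclic. On the other hand, since $X$ is not simply connected, it is not $1$-connected, so by part (1) with $n = 2$, the group $G_{\Gamma}$ is not of type $\F_2$, i.e.\ not finitely presented. This gives the desired simple group.

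There is essentially no obstacle here beyond remembering (or citing) the existence of a finite acyclic non-simply-connected flag complex; the heavy lifting has all been packaged into \Cref{mainthm:BB-type-simple-groups}. The only minor technical point worth stating explicitly is that the Bestvina--Brady examples can be made flag after subdivision, so that they are genuinely of the form $\Flag(\Gamma)$ for some finite graph $\Gamma$; this is standard and will not require any serious argument.
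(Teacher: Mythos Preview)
Your proposal is correct and follows essentially the same approach as the paper: the paper also deduces the corollary directly from \Cref{mainthm:BB-type-simple-groups} by invoking (via \cite{BestvinaBrady97}) the existence of a finite acyclic flag complex that is not simply connected. Your write-up simply spells out a bit more explicitly how such a complex arises and why it can be taken to be flag.
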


Independently and with different methods Bonn and Giersbach recently constructed non-discrete simple totally disconnected locally compact groups which are $\FP_2(\Z)$, but not compactly presented~\cite{BonnGiersbach25}.

In fact, we are neither aware of a way to transfer their methods to the discrete case, nor of a way to apply the methods in this paper to the non-discrete case.

\medskip

As the formulation of Theorem~\ref{mainthm:BB-type-simple-groups} already suggests, its proof is based on techniques that allow us to transfer the finiteness properties of the Bestvina--Brady group $\BB_{\Gamma}$ to the simple group $G_{\Gamma}$.
The following result, from which Theorem~\ref{mainthm:BB-type-simple-groups} will be deduced, sheds a bit more light on the role that is played by the group $\BB_{\Gamma}$ for the proof of Theorem~\ref{mainthm:BB-type-simple-groups}.

\begin{theorem}\label{introthm:linear-groups-as-retracts}
Let $H$ be a finitely generated subgroup of $\GL_n(\Q)$ for some $n \in \N$.
There exists a simple group $G$ that has the following properties:
\begin{enumerate}
\item $G$ has the same finiteness properties as $H$,
\item $H$ is a subgroup of $G$,
\item $G$ admits a quasi-retract onto $H$.
\end{enumerate}
\end{theorem}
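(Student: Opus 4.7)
The plan is to realise $G$ as a simple subgroup (typically the commutator subgroup) of a R\"over--Nekrashevych group $V_d(\mathcal G)$, where $\mathcal G$ is a carefully chosen self-similar extension of $H$ acting on a regular rooted $d$-ary tree $T_d$. Since the abstract asserts that the simple groups produced here \emph{are} R\"over--Nekrashevych groups, this is the natural framework.

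I would first embed $H$ as a self-similar group. Because $H$ is finitely generated, there is a finite set $S$ of primes with $H\le \GL_n(\Z[1/S])$. For each $p\in S$, the action of $\GL_n(\Q_p)$ on the tree of $p$-adic lattices, together with a choice of basepoint, gives a virtually self-similar action of $H$ on a regular rooted tree. Amalgamating over the primes in $S$---or using Nekrashevych's general embedding of finitely generated groups acting on profinite spaces into self-similar groups---one obtains a finitely generated self-similar group $\mathcal G\le \Aut(T_d)$ containing $H$. I would moreover arrange that $H$ acts non-trivially only on one first-level subtree; this structural feature is what later yields the quasi-retract.

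Next I would form $V_d(\mathcal G)$, the group of tree almost-automorphisms which on each leaf of a carrying forest act via an element of $\mathcal G$, and set $G\defeq [V_d(\mathcal G),V_d(\mathcal G)]$; Nekrashevych's simplicity theorem and its refinements guarantee that $G$ is simple under mild hypotheses on $\mathcal G$, and the inclusion $H\le G$ can be arranged after possibly replacing $H$ by a commutator copy. Finiteness properties transfer through the action of $G$ on a Stein--Farley-style polyhedral complex whose cell stabilisers are commensurable with finite direct products of copies of $\mathcal G$ and whose descending links are highly connected matching-type complexes; Brown's criterion then reduces $\F_n/\FP_n(R)$ for $G$ to the same property for $\mathcal G$, which by construction shares its finiteness properties with $H$.

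The quasi-retract $r\colon G\to H$ would be produced geometrically: given $g\in G$ viewed as an almost-automorphism, $r(g)$ records the $H$-component of its restriction to the distinguished first-level subtree stabilised by $H$, and standard word-length estimates along the Stein--Farley complex show that $r$ is Lipschitz and coarsely inverts the inclusion. The main obstacle is reconciling the self-similar embedding with simplicity: the known simplicity results for R\"over--Nekrashevych groups often require the self-similar data to be contracting or otherwise well-behaved, while arbitrary finitely generated linear $\Q$-groups need not admit a contracting self-similar action. Circumventing this---either by a direct verification of Nekrashevych's simplicity criterion for the almost-automorphism group without contraction, or by mixing $\mathcal G$ with an auxiliary contracting self-similar group that does not affect finiteness properties---will carry the bulk of the technical work.
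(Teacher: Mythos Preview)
Your overall architecture---build a self-similar group $\mathcal G$ related to $H$, form $V_d(\mathcal G)$, pass to the commutator, deduce finiteness properties via the Stein--Farley complex, and read off a quasi-retract from the leaf labels---is exactly the paper's framework. However, two of the steps you wave through are precisely the ones that carry the real content, and in each case your sketch heads in the wrong direction.

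\textbf{Construction of $\mathcal G$ with the correct finiteness properties.} You propose to obtain $\mathcal G$ by amalgamating actions on $p$-adic lattice trees over the primes in $S$, or by invoking a general Nekrashevych-type embedding, and then assert that ``by construction $\mathcal G$ shares its finiteness properties with $H$''. This is the crux, and neither suggestion gives it to you: generic self-similar extensions of $H$ can easily destroy finiteness (for instance $\Z[1/N]^n\rtimes H$ need not even be finitely generated when $H$ is trivial). The paper's answer is concrete and different. Embed $H\le\GL_n(\Z[1/N])$ with $H\cap D_n(\Q)=\{\id\}$, then set $\Gamma=\Z[1/N]^n\rtimes\langle H,\diag(N)\rangle$ inside $\Aff_n(\Z[1/N])$. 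This $\Gamma$ acts self-similarly on the rooted tree coming from the affine action on $\Z_p^n$ for a single prime $p\nmid N$, and fits into $1\to K_{n,N}\to\Gamma\to H\to 1$ with $K_{n,N}\cong\Z^n\ast_{v\mapsto Nv}$ of type $\F_\infty$; hence $\Gamma$ inherits exactly the finiteness properties of $H$. The states of an affine map $v\mapsto Av+b$ all have the same linear part $A$, so the projection $\Gamma\to H$ is \emph{persistent}: $r(g^u)=r(g)$ for every vertex $u$. This persistence is what makes your proposed quasi-retract $r([T_-,\sigma(g_1,\ldots,g_n),T_+])=r(g_1)$ well-defined under expansion; without it the map depends on the chosen representative triple.

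\textbf{The simplicity obstacle.} You identify contraction as the difficulty, but Nekrashevych's theorem already gives simplicity of $[V_d(\mathcal G),V_d(\mathcal G)]$ unconditionally. The genuine issue is that this commutator subgroup may have infinite index, in which case passing to it ruins the Stein--Farley argument and the quasi-retract. What is needed is finite abelianisation of $V_d(\mathcal G)$, and this is achieved not by mixing in a contracting group but by Zaremsky's diagonal embedding $f^{(m)}\colon\Gamma\hookrightarrow\Aut(\TT_{md})$, which for suitable even $m$ forces $V_{md}(f^{(m)}(\Gamma))_{\ab}$ to be finite while preserving the persistent retraction onto $H$.
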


\begin{remark}\label{introrem:algebraic-closure}
Recall that every finitely generated subgroup of $\GL_n(\overline{\Q})$, where $\overline{\Q}$ denotes the algebraic closure of $\Q$, can be embedded into $\GL_m(\Q)$ for an appropriate number $m \in \N$.
As a consequence, one could replace $\Q$ with $\overline{\Q}$ in Theorem~\ref{introthm:linear-groups-as-retracts}.
\end{remark}

Since every Bestvina--Brady group $\BB_{\Gamma}$, being a subgroup of a right-angled Artin group, is a subgroup of a right-angled Coxeter group~\cite{DavisJanuszkiewicz00}, it follows that $\BB_{\Gamma}$ is linear over $\mathbb{\Z}$ as the cosine matrix of a right-angled Coxeter group is integral.
Thus we can deduce Theorem~\ref{mainthm:BB-type-simple-groups} from Theorem~\ref{introthm:linear-groups-as-retracts} by considering the case where $H = \BB_{\Gamma}$, see Section~\ref{sec:applications}.
The proof of Theorem~\ref{introthm:linear-groups-as-retracts} is based on the R\"over--Nekrashevych construction (see Definition~\ref{def:roever-nekrashevich-groups}), which gained great popularity in the construction of finitely presented simple groups, especially in the context of the Boone--Higman conjecture, see~\cite{BelkBleakMatucciZaremsky23} for some background.
This construction takes as input a self-similar subgroup $H$ of the automorphism group of a $d$-adic rooted tree $T$ and associates to it a subgroup $V_d(H)$ of the homeomorphism group of the boundary of $T$, which is known as the R\"over--Nekrashevych group associated to $H$.
In order to realize the group $G$ from Theorem~\ref{introthm:linear-groups-as-retracts} as a R\"over-Nekrashevych group $V_d(H)$, there are three difficulties to overcome.
The first one is that $V_d(H)$ is not always simple.
In fact $V_d(H)$ can have infinite abelianization.
The second difficulty is that $H$ does not need to be a quasi-retract of $V_d(H)$.
This can be deduced from the structure of the R\"over--Nekrashevych group associated to Grigorchuk's group, which was shown to be finitely presented by R\"over~\cite{Roever99}.
Indeed, since Grigorchuk's group is not finitely presented, it cannot arise as a quasi-retract of a finitely presented group~\cite{Alonso94}.
The third and most dramatic difficulty is that there is no reason to believe that the group $H$ in Theorem~\ref{introthm:linear-groups-as-retracts} is self-similar, so that we cannot consider its corresponding R\"over-Nekrashevych group.
In fact, the question of the existence of self-similar structures in linear groups is not well understood and deserves further study, see~\cite{Kapovich12} for some results in this direction.
The fruitfulness of such a study was proven, for example, by Skipper, Witzel and Zaremsky~\cite{SkipperWitzelZaremsky19}, who found self-similar representations for a carefully chosen family $(H_n)_{n \in \N}$ of linear groups over global function fields, such that the corresponding R\"over-Nekrashevych groups $V_d(H_n)$ are of type $\F_n$ but not of type $\F_{n+1}$.
Moreover they showed that their groups $V_d(H_n)$ admit $H_n$ as a quasi-retract.
To overcome the first of the aforementioned difficulties of constructing such R\"over--Nekrashevych groups, they introduced the notion of coarse diagonality of a self-similar group $H$, which means that for every $h \in H$ and every level-$1$ state $h_i$ of $h$, the element $h^{-1}h_i$ has finite order, and showed that the R\"over--Nekrashevych group of a finitely generated coarsely diagonal self-similar group is virtually simple.
This rather restrictive property was relaxed by Zaremsky~\cite{Zaremsky25} to the notion of weak diagonality, which is still sufficient to deduce the virtual simplicity of $V_d(H)$, while only requiring that there is a generating set $S$ of $H$ such that, for all $s \in S$ and every level-$1$ state $s_i$ of $s$, the element $s^{-1}s_i$ has finite order in the abelianization of $H$.
To overcome the second of the aforementioned difficulties, Skipper, Witzel and Zaremsky~\cite{SkipperWitzelZaremsky19} introduced the notion of persistency of a self-similar group and combined it with the well-known notion of rationality, which allowed them to deduce the existence of a quasi-retract onto their linear groups $H_n$.
Finally, they twisted some natural self-similar action of $H_n$ that satisfied all of the required properties to deduce their result.
The first goal of this paper will be to show that a certain class of linear groups $H$ over $\Q$ admits self-similar representations for which one can completely abandon the assumptions of rationality, weak diagonality, and persistence, while still retaining that the corresponding R\"over--Nekrashevych group $V_d(H)$ is virtually simple, has the same finiteness properties as $H$, and admits $H$ as a quasi-retract, see Sections~\ref{sec:quasi-retracts}, ~\ref{sec:finite-abel},~\ref{sec:realizing-persistent-retracts}.
In the second step we produce a self-similar extension of an arbitrary finitely generated subgroup of $\GL_n(\Q)$, which ultimately allows us to get rid of the self-similarity assumption itself and thereby prove Theorem~\ref{introthm:linear-groups-as-retracts}, see Subsection~\ref{subsec:self-similar-extensions}.
Since after all the group $G$ in Theorem~\ref{introthm:linear-groups-as-retracts}, and hence the group in Corollary~\ref{introcor:BB-type-simple-groups-n-not-n+1}, is a R\"over--Nekrashevych group, and therefore Thompson-like in the sense of Skipper and Zaremsky~\cite[Definition 3.2]{SkipperZaremsky24}, we obtain an affirmative answer to the following question of Zaremsky~\cite[Question 113]{BurilloBuxNucinkis18}.

\begin{question}[Zaremsky] 
In the family of Thompson-like groups, can one find non-finitely presentable groups of type $\FP_2(\Z)$?
\end{question}

As a further application of Theorem~\ref{introthm:linear-groups-as-retracts}, we obtain that every coarse invariant of a finitely generated subgroup of $\GL_n(\Q)$ arises as a lower bound for the invariant of a simple group with the same finiteness properties.
An example of such a coarse invariant is provided by (higher dimensional) Dehn functions, which allows us to prove the following, see Corollary~\ref{cor:higher-dehn-fct-general}.

\begin{corollary}\label{introcor:higher-dehn-fct-general}
Every (high-dimensional) Dehn function of a subgroup of $\GL_n(\Q)$ arises as a lower bound on the corresponding Dehn function of a simple group with the same finiteness properties.
\end{corollary}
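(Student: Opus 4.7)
The plan is an immediate reduction to \Cref{introthm:linear-groups-as-retracts}, combined with the standard monotonicity of higher-dimensional Dehn functions under quasi-retracts. Given a finitely generated subgroup $H\le \GL_n(\Q)$ of type $\F_{k+1}$, so that the $k$-dimensional Dehn function $\delta_H^k$ is defined, I would apply \Cref{introthm:linear-groups-as-retracts} to obtain a simple group $G$ that has the same finiteness properties as $H$ and admits a quasi-retract onto $H$. In particular $G$ is itself of type $\F_{k+1}$, so $\delta_G^k$ is defined, and the content of the corollary reduces to the inequality $\delta_H^k \preceq \delta_G^k$.

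The remaining step is to invoke the general principle that a quasi-retract of $G$ onto $H$ forces $\delta_H^k \preceq \delta_G^k$ in the usual equivalence of functions $\N \to \N$. Heuristically: given an admissible filling of a $k$-sphere in $H$, one transports the sphere into $G$ via the quasi-isometric embedding $H\hookrightarrow G$, fills it there at cost $\delta_G^k$, and pushes the resulting $(k+1)$-ball back to $H$ via the quasi-retraction; since both maps are coarsely Lipschitz, volumes change only by a multiplicative constant. For $k=1$ this is essentially the classical quasi-isometry invariance of the Dehn function due to Alonso, and for $k\ge 2$ the analogous statement has been proven in the literature on higher-dimensional Dehn functions (Alonso--Wang--Pride, Bridson, and others). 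Applied to the pair $(H,G)$ produced above, it yields exactly the content of \Cref{introcor:higher-dehn-fct-general}.

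The only nontrivial point, and the step where I expect the main bookkeeping to lie, is in verifying that the precise notion of quasi-retract furnished by \Cref{introthm:linear-groups-as-retracts} matches the hypothesis under which the monotonicity statement for $\delta^k$ is available in the literature. This is not a genuine conceptual obstacle but a matter of compatibility of definitions, which I would address at the beginning of \Cref{sec:applications} by spelling out the conventions on quasi-retracts and on higher-dimensional Dehn functions and citing the corresponding result.
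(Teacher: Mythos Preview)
Your proposal is correct and matches the paper's own proof essentially verbatim: the paper simply invokes Theorem~\ref{introthm:linear-groups-as-retracts} together with the monotonicity of (higher) Dehn functions under quasi-retracts, which it records as Theorem~\ref{thm:quasi-retract-dehnfunc} citing Alonso and Alonso--Wang--Pride. The compatibility check you flag is handled in the paper exactly as you suggest, by stating the quasi-retract conventions and the relevant Dehn function inequality explicitly in Section~\ref{sec:quasi-retracts}.
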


Corollary~\ref{introcor:higher-dehn-fct-general} provides us with a tool that is potentially capable of producing finitely presented simple groups with very fast growing Dehn functions.
The question on the existence of such groups naturally arises in the light of the fact that finitely presented simple groups have solvable word problem and therefore a computable Dehn function so that it is natural to ask how fast it can be, see e.g.~\cite{ZaremskyMO} for a discussion on this initiated by Zaremsky.
However, despite of the fact that no recursive upper bound on Dehn functions of finitely presented linear groups is known so far,
though conjectured to exist by Gersten and Riley~\cite[Conjecture 8.7]{GerstenRiley05}, the fastest known Dehn function of a linear group is exponential.
As exponential functions also arise as high-dimensional Dehn functions of linear groups over $\Q$, we obtain the following consequence of Corollary~\ref{introcor:higher-dehn-fct-general}.

\begin{corollary}\label{introcor:higher-dehn-fct}
For every finite subset $I \subseteq \N$, there is a simple group of type $\F_{\infty}$ whose $n$-dimensional Dehn function is at least exponential for every $n \in I$.
\end{corollary}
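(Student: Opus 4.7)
The plan is to apply Corollary~\ref{introcor:higher-dehn-fct-general} to a carefully chosen finitely generated linear group $H \leq \GL_N(\Q)$ whose $n$-dimensional Dehn function is at least exponential for every $n \in I$, and of type $\F_\infty$.

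The first step will be, for each $n \in I$, to invoke from the literature a finitely generated subgroup $H_n \leq \GL_{k_n}(\Q)$ which is of type $\F_\infty$ and whose $n$-dimensional Dehn function is at least exponential. For $n = 1$ one may take $H_1 = \BS(1,2)$, which is linear over $\Q$, of type $\F_\infty$, and has exponential Dehn function; for $n \geq 2$ one cites known computations of higher-dimensional Dehn functions of linear groups over $\Q$, as already alluded to in the discussion preceding the corollary.

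Next, I would form $H := \prod_{n \in I} H_n$. Since $I$ is finite, $H$ embeds block-diagonally into $\GL_N(\Q)$ for $N = \sum_{n \in I} k_n$; a finite product of groups of type $\F_\infty$ is again of type $\F_\infty$; and each factor $H_n$ is a retract of $H$ via the canonical projection. Since the $n$-dimensional Dehn function of a group is bounded below by that of any of its retracts (provided both are of type $\F_{n+1}$), one obtains $\delta_H^{(n)} \succeq \delta_{H_n}^{(n)}$ for every $n \in I$, so the $n$-dimensional Dehn function of $H$ is at least exponential for every such $n$.

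Finally, applying Corollary~\ref{introcor:higher-dehn-fct-general} to $H$ will produce a simple group $G$ with the same finiteness properties as $H$, hence of type $\F_\infty$, and whose $n$-dimensional Dehn function is bounded below by that of $H$. This completes the argument. The main obstacle is the input from the literature in the first step: for each fixed $n$, one needs the existence of a finitely generated linear group over $\Q$ of type $\F_\infty$ whose $n$-dimensional Dehn function is at least exponential. Once this is available, the product construction and Corollary~\ref{introcor:higher-dehn-fct-general} handle the rest with no further difficulty.
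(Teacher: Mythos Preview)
Your proposal is correct and follows essentially the same approach as the paper: form the product $H=\prod_{n\in I}H_n$ of $\Q$-linear groups of type $\F_\infty$ with exponential $n$-dimensional Dehn function, and then apply Corollary~\ref{introcor:higher-dehn-fct-general} (equivalently Theorem~\ref{introthm:linear-groups-as-retracts} together with Theorem~\ref{thm:quasi-retract-dehnfunc}). The paper resolves the literature input you flagged by taking $H_n=\SL_{n+2}(\Z)$, which is of type $\F_\infty$ by Raghunathan and has exponential $n$-dimensional Dehn function by Leuzinger--Young; the only other cosmetic difference is that the paper composes the quasi-retraction $G\to H$ with the projection $H\to H_n$ directly, rather than first arguing that $\delta_H^{(n)}$ is exponential, but this amounts to the same thing.
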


A special case of Corollary~\ref{introcor:higher-dehn-fct} was obtained by Zaremsky~\cite{Zaremsky25}, who carefully calibrated certain actions of the Baumslag-Solitar groups $\BS(1,n) \leq \GL_2(\Q)$ on an appropriate rooted tree to make them rational, weakly diagonal, and persistent. This allowed him to obtain $\BS(1,n)$ as a quasi-retract of its R\"over--Nekrashevych group and therefore to deduce the existence of finitely presented simple groups whose Dehn function is bounded from below by an exponential function.

As another consequence of our considerations, we obtain the first known examples of self-similar groups that are separated by homological finiteness properties, see Proposition~\ref{prop:BB-type-selfsim-groups}.

\begin{proposition}\label{introprop:BB-type-selfsim-groups}
For every $n \in \N$ there is a self-similar group of type $\FP_{n}(\mathbb{Z})$ that is neither of type $\FP_{n+1}(\mathbb{Z})$ nor finitely presented.
Moreover there is a self-similar group of type $\FP_{\infty}(\mathbb{Z})$ that is not finitely presented.
\end{proposition}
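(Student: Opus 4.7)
The plan is to apply the self-similar extension construction of Subsection~\ref{subsec:self-similar-extensions} to suitably chosen Bestvina--Brady groups, and then deduce that the resulting self-similar extensions have the desired finiteness properties.

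First, for each $n \in \N$ I would select a finite graph $\Gamma$ whose flag complex $\Flag(\Gamma)$ is homologically $(n-1)$-connected over $\Z$ but neither homologically $n$-connected nor simply connected; such $\Gamma$ is known to exist by standard constructions. By the theorem of Bestvina--Brady, $\BB_\Gamma$ is then of type $\FP_n(\Z)$ but neither $\FP_{n+1}(\Z)$ nor finitely presented. For the $\FP_\infty$ part of the statement, I would instead pick $\Gamma$ so that $\Flag(\Gamma)$ is a finite acyclic non-simply-connected complex, as in~\cite{BestvinaBrady97}, so that $\BB_\Gamma$ is of type $\FP_\infty(\Z)$ but not finitely presented.

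Second, as recalled in the introduction, each $\BB_\Gamma$ is a subgroup of a right-angled Artin group, which embeds into a right-angled Coxeter group whose cosine matrix is integral; this provides an embedding $\BB_\Gamma \hookrightarrow \GL_m(\Z) \leq \GL_m(\Q)$ for some $m$. Thus $\BB_\Gamma$ satisfies the hypotheses of the construction in Subsection~\ref{subsec:self-similar-extensions}, which I would apply to produce a self-similar group $\tilde{H}$ containing $\BB_\Gamma$ as a subgroup. By design, $\tilde{H}$ is self-similar, so it only remains to check its finiteness properties.

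The main obstacle, and the step I would work out carefully, is to verify that $\tilde{H}$ has the same finiteness properties as $\BB_\Gamma$. Since the proof of Theorem~\ref{introthm:linear-groups-as-retracts} propagates the finiteness properties of $H = \BB_\Gamma$ all the way up to $V_d(\tilde{H})$, the intermediate self-similar extension $\tilde{H}$ must also share these properties; otherwise the transfer from $\tilde{H}$ to its R\"over--Nekrashevych group could not recover them. More concretely, I would expect $\tilde{H}$ to appear as a group containing $\BB_\Gamma$ as a finite-index subgroup, or as a semidirect product of $\BB_\Gamma$ with a group of type $\F_\infty$, so that standard preservation results for $\FP_n(\Z)$ and non-finite-presentability under commensurability or well-behaved extensions apply. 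Combining this with the self-similarity built into $\tilde{H}$ yields the proposition.
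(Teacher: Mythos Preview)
Your approach is correct and is essentially the paper's own proof: the paper simply invokes Proposition~\ref{prop:realizing-G-as-persistent-retract}, which already packages the self-similar extension together with the statement that it has the same finiteness properties as the input group, and then feeds in the Bestvina--Brady groups (linear over $\Z$, with the appropriate finiteness behaviour).

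The only comment is that what you flag as the ``main obstacle'' is not an obstacle at all once you cite Proposition~\ref{prop:realizing-G-as-persistent-retract} directly. Your indirect argument---that $\tilde H$ must inherit the finiteness properties because otherwise the passage to $V_d(\tilde H)$ could not recover them---is logically shaky (the upward transfer via Proposition~\ref{prop:positive-finiteness-properties} uses the finiteness properties of $\tilde H$ as input, not as output). Your second, concrete expectation is the right one and is exactly what the paper does: $\tilde H = K_{n,N} \rtimes \BB_\Gamma$ with $K_{n,N}$ of type $\F_\infty$ (Lemma~\ref{lem:finiteness-properties}), so the extension result bounds the finiteness properties of $\tilde H$ below by those of $\BB_\Gamma$, and the retraction $\tilde H \to \BB_\Gamma$ bounds them above.
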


Finally, it might be worth mentioning that Theorem~\ref{introthm:linear-groups-as-retracts} seems to push the current techniques for constructing R\"over--Nekrashevych groups with interesting finiteness properties via linear groups, which began in Scott~\cite{Scott84a} and evolved e.g.\ in~\cite{SkipperWitzelZaremsky19,Zaremsky25embedding,Zaremsky25}, to their limits.
As a natural next step in this direction, we propose the following question.

\begin{question}\label{quest:linear-groups-as-retracts}
Let $H$ be a finitely generated subgroup of $\GL_n(\C)$ for some $n \in \N$.
Does there exist a simple group $G$ that has the following properties?
\begin{enumerate}
\item $G$ has the same finiteness properties as $H$,
\item $H$ is a subgroup of $G$,
\item $G$ admits a quasi-retract onto $H$.
\end{enumerate}
\end{question}

As indicated above, an answer to this question will likely require completely new techniques.
We take the opportunity to formulate our second question that naturally arises within the framework of R\"over--Nekrashevych groups.

\begin{question}\label{quest:embedding}
Does every finitely generated linear group embed into a finitely generated self-similar group?
\end{question}

We conclude with the following general question. There is no a priori reason why simple groups of type $\FP_2(\Z)$ should have solvable word problem. So in the context of the first construction of infinitely presented simple groups of type $\FP_2(\Z)$ and the recent interest in the Boone--Higman Conjecture \cite{BelkBleakMatucciZaremsky23}, it is natural to ask:
\begin{question}\label{ques:hml-BH-conj}
Does every countable group embed in a simple group of type $\FP_2(\Z)$?
\end{question}
Recall that Leary proved in \cite{Leary2018} that every countable group embeds in a $\FP_2(\Z)$ group. On the other hand, it is a classical theorem of Hall \cite{Hall1974} that every countable group embeds in a finitely generated simple group, see also \cite{Gorjuvskin74,Schupp76,BelkZaremsky2022} for some different proofs.

\subsection*{Acknowledgments.}
CLI was partially supported by the DFG projects 281869850 and 541703614. XW is a member of LMNS and is supported by NSFC No.12326601. He thanks Fan Wu for an inspiring discussion that leads to Question \ref{ques:hml-BH-conj}. The authors are grateful to Roman Sauer for a number of helpful discussions. 

\section{Background on R\"over--Nekrashevych groups}

We introduce R\"over--Nekrashevych groups, following the exposition in \cite{SkipperWitzelZaremsky19} and \cite{BelkMatucci2024}. For a more detailed introduction to the subject, we refer to \cite{Nekrashevych05, Nekrashevych18-2}.

\subsection{The R\"over--Nekrashevych group $V_d(G)$}

For $d \in \N$, let $\TT_d$ be the infinite rooted $d$-ary tree whose vertex set $V(\TT_d)$ is given by the set $X_d^{\ast}$ of words over the alphabet $X_d=\left\{1,\ldots, d\right\}$ and whose (directed) edges are given by the pairs $(u,ux)$ with $u\in V(\TT_d)$ and $x \in X_d$.
In other words, $\TT_d$ is the Cayley graph of the free monoid $X_d^{\ast}$ with respect to $X_d$.
For $f\in \Aut(\TT_d)$ and $u\in V(\TT_d)$ there is a unique $f^u\in \Aut(\TT_d)$ with $f(uv)= f(u) f^u(v)$ for every $v\in X_d^{\ast}$. We call $f^u$ the \emph{state} of $f$ at $u$.
Note that $f$ fixes the root of $\TT_d$.
As a consequence, $f$ restricts to an action on the \emph{level}-$n$ vertices of $\TT_d$, which are represented by words of length $n$ over $X_d$.
In the case where $n=1$, this provides us with a permutation $\rho(f) \in S_d$ in the symmetric group $S_d$ on $d$ elements via $\rho(f)(x)=f(x)$.
If no ambiguity is possible, we will also write $\rho(f)$ to denote the element of $\Aut(\TT_d)$ that is given by $\rho(f)(xv)=f(x)v$ for every $x \in X_d$ and every $v \in X_d^{\ast}$.
Automorphisms of the latter type are called \emph{rooted} and can be combined with the states of $f$ to decompose $f$ via the so-called \emph{wreath recursion}, which is given by the isomorphism
\[
\Aut(\TT_d)\to S_d\wr \Aut(\TT_d),\ f\mapsto \rho(f)(f_1,\ldots,f_d),
\]
where $(f_1,\ldots,f_d)$ acts on $\TT_d$ by mapping a word of the form $xv$ to $xf_x(v)$.
Using the wreath recursion, we can define a subset $S \subseteq \Aut(\TT_d)$ to be \emph{self-similar} if for every $s\in S$ and $u\in V(\TT_d)$ we have $s^u\in S$.
A subgroup $G \leq \Aut(\TT_d)$ is called \emph{self-similar} if it is self-similar as a subset.
If $G$ admits a finite generating set $S$ that is self-similar, then $G$ is said to be an \emph{automata group}.
We say that an element $f\in \Aut(\TT_d)$ is \emph{finite state}, if its set of states is finite and that a subgroup $G\leq \Aut(\TT_d)$ is \emph{finite state}, if all its elements are finite state.
Note that a subgroup $G \leq \Aut(\TT_d)$ is an automata group if and only if it is finite state and self-similar.
When dealing with groups that act on rooted trees, it is often useful to consider for each $u\in V(\TT_d)$ the embedding $\iota_u \colon \Aut(\TT_d) \to \Aut(\TT_d)$ which maps an element $f \in \Aut(\TT_d)$ to the automorphism that acts by $\iota_u(uw)=uf(w)$ on words with prefix $u$ and trivially on all other words. 
The groups we are interested in act on the boundary $\partial \TT_d=X_d^{\omega}$ of $\TT_d$, which is the $d$-ary Cantor set $\CC_d=X_d^{\omega}$ of infinite words in $X_d$. For $u\in X^{\ast}$, we will denote by
\[
\CC_d(u) \defeq \left\{uv\mid v\in X_d^{\omega}\right\} \subseteq \CC_d
\]
the cone of elements of $\CC_d$ starting with $u$.
Note that $\Aut(\TT_d)$ embeds in $\Aut(\partial \TT_d)$ via its natural action on the set of finite prefixes of an element $u\in X_d^{\omega}$.
Regarding this, we will sometimes view $\Aut(\TT_d)$ as a subgroup of $\Homeo(\partial \TT_d)$ via this embedding.
A finite subtree $T$ of $\TT_d$ will be called \emph{complete} if it is rooted, i.e.\ contains the root $\emptyset$ of $\TT_d$ and has the property that whenever we have $u, ux \in V(T)$ for some $x\in X_d$, then we also have $uy \in V(T)$ for every $y\in X_d$. A vertex of a finite complete subtree $T$ is called a \emph{leaf}, if it has valence $1$. Note that by the choice of the alphabet $X_d$, there is a natural lexicographic order on the leaves of a finite complete subtree of $\TT_d$.

\begin{definition}\label{def:almost-automorphisms}
Let $T_{-}$ and $T_{+}$ be complete rooted subtrees of $\TT_d$ with the same number of leaves. Denote by $u_1, \ldots, u_n$ the ordered leaves of $T_{-}$ and by $v_1,\ldots, v_n$ the ordered leaves of $T_{+}$. Let $f_1,\ldots, f_n \in \Aut(\TT_d)$ and let $\sigma \in S_n$ be a permutation.
Then we write
\[
\left[T_{-}, \sigma(f_1,\ldots, f_n),T_+\right]
\]
to denote the element in $\Homeo(\partial \TT_d)$ that maps $\CC_d(v_i)$ to $\CC_d(u_{\sigma(i)})$ via $v_iw\mapsto u_{\sigma(i)}f_i(w)$.
Elements of $\Homeo(\partial \TT_d)$ that arise in that way will be called \emph{almost automorphisms} of $\TT_d$.
The group of all almost automorphisms of $\TT_d$ will be denoted by $\AAut(\TT_d)$.
\end{definition}

\begin{definition}\label{def:roever-nekrashevich-groups}
    Let $G\leq \Aut(\TT_d)$ be a self-similar group. The \emph{R\"over--Nekrashevych group} associated to $G$ is the subgroup of $\AAut(\TT_d)$ that is given by
    \[
        V_d(G):= \left\{\left[T_-,\sigma(f_1,\ldots,f_n),T_+\right] \in \AAut(\TT_d) \mid f_1,\ldots, f_n\in G\right\}.
    \]
\end{definition}

Note that while as a set $V_d(G)$ makes sense even if $G$ is not self-similar, the self-similarity condition guarantees that this is actually a subgroup. For $G=\left\{1\right\}$ we obtain the Higman--Thompson group $V_d(G)=V_d$, which is well-known to be of type $F_{\infty}$.
Let $C$ be the unique complete subtree of $\TT_d$ with one root and $d$ leaves.
Using $C$, we define an embedding of a self-similar group $G$ into its R\"over--Nekrashevych group by
\[
\iota_1 \colon G \to V_d(G),\ g \mapsto (C, \mathrm{id}(g,1,\cdots, 1),C).
\]
We collect some well-known properties of R\"over--Nekrashevych groups.

\begin{proposition}\label{prop:prpty-Vd(G)}
Let $G\leq \Aut(\TT_d)$ be self-similar. 
Then the following hold:
    \begin{enumerate}
        \item $V_d(G)=\left\langle \iota_1(G), V_d\right\rangle$ and in particular $V_d(G)$ is finitely generated if $G$ is finitely generated;
        \item the commutator subgroup $[V_d(G),V_d(G)]$ is simple;
    \end{enumerate}
\end{proposition}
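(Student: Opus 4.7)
The plan for part (1) is to establish the non-trivial inclusion $V_d(G) \subseteq \langle \iota_1(G), V_d \rangle$, since the reverse inclusion holds by definition. Given an arbitrary element $h = [T_-, \sigma(f_1, \ldots, f_n), T_+] \in V_d(G)$, I will factor it as
\[
h = [T_-, \sigma, T_+] \cdot [T_+, \id(f_1, \ldots, f_n), T_+],
\]
noting that the first factor lies in $V_d$ by construction. The second factor I will then split as a product of ``single-cone plug-in'' elements $[T_+, \id(1, \ldots, 1, f_i, 1, \ldots, 1), T_+]$, each of which equals $\iota_{v_i}(f_i)$, where $v_i$ is the $i$-th leaf of $T_+$. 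The crucial step is then to show $\iota_v(f) \in \langle \iota_1(G), V_d \rangle$ for every vertex $v \in V(\TT_d)$ and every $f \in G$. I will do this by exhibiting an explicit element $\pi \in V_d$ (a suitable ``cone transposition'') such that $\pi\,\iota_1(f)\,\pi^{-1} = \iota_v(f)$, verified directly from the cone-shift formula. When $v$ has $1$ as a prefix, such a single transposition does not exist; I will instead produce $\pi$ as a composition of two, first moving $\iota_1(f)$ to $\iota_j(f)$ for some top-level cone $j$ disjoint from $\CC_d(v)$, and then moving $\iota_j(f)$ to $\iota_v(f)$. Finite generation of $V_d(G)$ then follows at once from the classical finite generation of the Higman--Thompson group $V_d$ combined with finite generation of $G$.

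The plan for part (2) is to follow the classical Higman--Brown--Nekrashevych strategy for simplicity of commutator subgroups of Thompson-like groups. Let $N \neq \{1\}$ be a normal subgroup of $[V_d(G), V_d(G)]$ containing a non-trivial element $\gamma$. I will first observe that $V_d(G)$ acts minimally on the Cantor set $\CC_d = \partial \TT_d$: since $V_d \leq V_d(G)$ acts transitively on the collection of cylinder cones, minimality is immediate. Using this minimality together with conjugation by elements of $V_d$, I will produce a non-trivial element of $N$ whose support is contained in a prescribed small cylinder $\CC_d(u)$, exploiting that after sufficient subdivision of $T_{\pm}$ any non-trivial element of $\AAut(\TT_d)$ moves some cone completely off itself.

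Finally, I will invoke a commutator-shifting argument: since $V_d$ provides enough conjugation room to place such small-support elements into arbitrarily many pairwise disjoint cylinders, and elements with disjoint supports commute, I can realize any commutator $[a,b] \in [V_d(G), V_d(G)]$ as a product of conjugates of the small-support element, thereby forcing $N = [V_d(G), V_d(G)]$. The main obstacle I anticipate lies in the interplay between the permutation $\sigma$ and the plugged-in $G$-factors of a general almost automorphism, which can obscure where the support of a conjugate ultimately lands; I would overcome this by refining $T_{\pm}$ through the wreath recursion (keeping all plugged-in factors inside $G$ by self-similarity) until supports are fully localized within a single cone. For a cleaner exposition, I would alternatively invoke directly the general simplicity theorem of Nekrashevych~\cite{Nekrashevych18-2} for R\"over--Nekrashevych groups, which packages precisely this commutator-shifting argument.
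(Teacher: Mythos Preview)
Your proposal is correct. The paper's own proof consists of nothing more than two citations to Nekrashevych~\cite{Nekrashevych18-2}: Lemma~5.11 for (1) and Theorem~4.7 for (2). Your approach is therefore not different in spirit but simply more explicit.

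For (1), your direct factorisation
\[
[T_-,\sigma(f_1,\dots,f_n),T_+]=[T_-,\sigma,T_+]\cdot\prod_i \iota_{v_i}(f_i)
\]
together with the conjugation $\pi\,\iota_1(f)\,\pi^{-1}=\iota_v(f)$ by a cone-swapping $\pi\in V_d$ is exactly the argument underlying Nekrashevych's lemma; your two-step workaround when $1$ is a prefix of $v$ is the right fix and is needed. This buys self-containment at essentially no cost.

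For (2), the commutator-shifting/small-support sketch you give is the standard Higman-style argument that Nekrashevych packages in his Theorem~4.7, and you yourself note that one can simply invoke that theorem. Since a clean execution of the sketch requires some care (normality is only in $[V_d(G),V_d(G)]$, not in $V_d(G)$, so the conjugating elements must be taken from the commutator subgroup; one typically uses that $V_d\leq V_d(G)$ has perfect commutator subgroup acting with enough transitivity on cones), citing Nekrashevych---as the paper does---is the cleaner option here.
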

\begin{proof}
    For (1) see \cite[Lemma 5.11]{Nekrashevych18-2} and (2) is \cite[Theorem 4.7]{Nekrashevych18-2}.
\end{proof}

\begin{proposition}[{\cite[Theorem 4.8]{Nekrashevych18-2}}]\label{prop:abelianization-Vd(G)}
    Let $G\leq \Aut(\TT_d)$ be self-similar and let $\pi:G\to G_{ab}=G/[G,G]$ be the abelianization morphism. Then the abelianization $V_d(G)_{ab}$ of $V_d(G)$ can be described as follows:
    \begin{enumerate}

        \item If $d$ is even, then $V_d(G)_{ab}$ is isomorphic to the quotient of $G_{ab}$ by the relations $\pi(g)=\sum_{i=1}^d \pi(g_i)$ for all $g=\rho(g)(g_1,\ldots,g_d)\in G$.
        \item If $d$ is odd, then $V_d(G)_{ab}$ is isomorphic to the quotient of $\Z/2\Z \oplus G_{ab}$ by the relations $\pi(g)={\rm sign}(\rho(g)) \oplus \sum_{i=1}^d \pi(g_i)$ for all $g=\rho(g)(g_1,\ldots,g_d)\in G$.
  \end{enumerate}
\end{proposition}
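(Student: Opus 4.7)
The plan is to combine Proposition~\ref{prop:prpty-Vd(G)}(1) with the known abelianization of the Higman--Thompson group $V_d$ and reduce the claim to a computation controlled by the wreath recursion. Since $V_d(G) = \langle \iota_1(G), V_d\rangle$, the two inclusions induce a surjection $V_d^{\ab} \oplus G_{\ab} \twoheadrightarrow V_d(G)_{\ab}$. Classically $V_d^{\ab}=0$ for $d$ even, while $V_d^{\ab}\cong\Z/2\Z$ for $d$ odd, detected by the parity of the $S_n$-permutation in the normal form. The task then reduces to identifying the kernel of this surjection.

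I would first collect two auxiliary facts. For each leaf $u$ of a complete subtree $T$ of $\TT_d$ there is an embedding $\iota_u^T\colon G \to V_d(G)$ that acts as $g$ on $\CC_d(u)$ and fixes everything else; any two such embeddings agree modulo commutators, because an element of $V_d$ swapping the cones $\CC_d(u)$ and $\CC_d(u')$ conjugates one into the other, so in $V_d(G)_{\ab}$ the class of $\iota_u^T(g)$ depends only on $[g] \in G_{\ab}$. Second, the wreath recursion $g=\rho(g)(g_1,\ldots,g_d)$ lifts, by expanding $\iota_1(g)$ along the tree $C$ of depth one, to the equality
\[
\iota_1(g) \;=\; r \cdot \prod_{i=1}^{d} \iota_i^C(g_i)
\]
in $V_d(G)$, where $r\in V_d$ is the rooted automorphism associated to $\rho(g)$. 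Abelianizing this identity and applying the first fact gives $\pi(g)=[r]_{V_d^{\ab}} + \sum_{i=1}^d \pi(g_i)$. For $d$ odd, $[r]_{V_d^{\ab}} = {\rm sign}(\rho(g))$, while for $d$ even it vanishes. Hence the relations in the statement hold in $V_d(G)_{\ab}$, producing a surjection from the quotient described in the proposition onto $V_d(G)_{\ab}$.

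The main obstacle is the converse direction: showing these relations are sufficient. My approach would be to construct the inverse explicitly. For $d$ odd, define a candidate map
\[
\Phi \colon V_d(G) \to \Z/2\Z \oplus G_{\ab}, \quad \left[T_-, \sigma(f_1,\ldots,f_n), T_+\right] \mapsto {\rm sign}(\sigma)\oplus \sum_{i=1}^n \pi(f_i),
\]
with the analogous map without the $\Z/2\Z$ factor for $d$ even, and verify that $\Phi$ is invariant under the two fundamental moves on normal forms: simultaneous caret expansion at a matched pair of leaves, which triggers a wreath recursion of the corresponding $f_i$, and composition of two elements in normal form. The first move is absorbed exactly by the relations imposed in the target; the second reduces to the standard Higman--Thompson normal-form calculus. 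Once $\Phi$ is shown to be a well-defined homomorphism, it automatically annihilates commutators and provides the sought inverse, completing the proof.
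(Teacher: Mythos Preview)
The paper does not give its own proof of this proposition; it is stated with a citation to \cite[Theorem 4.8]{Nekrashevych18-2} and used as a black box. Your outline is essentially the standard argument behind Nekrashevych's result: establish the relations by expanding a local copy of $g$ via its wreath recursion (using that all ``one-cone'' embeddings $\iota_u^T(g)$ are $V_d$-conjugate, hence equal in the abelianization), and then build the inverse by the explicit assignment $[T_-,\sigma(f_1,\ldots,f_n),T_+]\mapsto \mathrm{sign}(\sigma)\oplus\sum_i\pi(f_i)$ and check invariance under simple expansions and multiplicativity. One small point worth making explicit in your write-up: for the sign to be stable under a simple expansion one needs that inflating a single leaf to a caret changes the number of inversions of $\sigma$ by a multiple of $d-1$, which is even precisely when $d$ is odd; this is exactly what makes the $\Z/2\Z$ component survive for odd $d$ and disappear for even $d$. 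With that observation added, your sketch is complete and matches the cited proof.
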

\begin{remark}
    Since the relations in \Cref{prop:abelianization-Vd(G)} are compatible with the group operation, to determine $V_d(G)_{ab}$ it suffices to quotient $G_{ab}$ (resp.\ $\Z/2\Z\oplus G_{ab}$) by the relations induced by a generating set for $G$.
\end{remark}

\subsection{Expansions of triples}

Different triples $(T_-, \sigma(f_1,\ldots, f_n), T_+)$ can define the same element $[T_-, \sigma(f_1,\ldots, f_n), T_+]\in\AAut(\TT_d)$. To give a precise criterion for when this is the case, we introduce expansions, following \cite{Zaremsky21} and also \cite{WuWuZhaoZhou25}.
Expansions will play an important role in the construction of a Stein--Farley complex the R\"over--Nekrashevych groups act on and thus in the proof of their finiteness properties. For the proof of finiteness properties we will need to consider the more general situation of triples $(F_-,\sigma(f_1,\ldots,f_n),F_+)$, where $F_-$ and $F_+$ are $d$-ary rooted complete forests with $n$ leaves, that is, ordered sequences of finitely many finite $d$-ary rooted complete trees. We will write $1_n$ to denote the \emph{trivial forest with $n$ roots}. We will now assume that all forests and trees are finite, rooted and complete.
A ($d$-ary) \emph{caret} is a tree with $d$ leaves.
Let $F'$ and $F$ be forests.
We call $F'$ a \emph{simple expansion} of $F$ if $F'$ is obtained from $F$ by attaching a single caret along its root to one of the leaves of $F$.
We call $F'$ an \emph{expansion} of $F$ if it is obtained from $F$ by a sequence of simple expansions.
An expansion $F'$ of $F$ will be called \emph{elementary} if $F'$ is obtained from $F$ by a sequence of simple expansions of leaves of $F$.
Similarly, a forest $F$ is called \emph{elementary} (resp.\ \emph{simple}) if it is an elementary (resp.\ simple) expansion of a trivial forest.
Let $F_J^{(n)}$ denote the elementary $n$-rooted forest with $|J|$ carets whose roots correspond to the elements of $J\subseteq \left\{1,\ldots,n\right\}$.
With these notions, we can define a \emph{simple expansion} of a triple $(F_-,\sigma(f_1,\ldots,f_n),F_+)$ as a triple of the form $$(F'_-,\sigma'(f_1,\ldots,f_{i-1},f_i^{1},\ldots, f_i^{d} 
,f_{i+1},\ldots,f_n),F'_+)$$
for some $i$, where $F'_+$ (resp.\ $F'_-$) is a simple expansions of $F_+$ (resp.\ $F_-$) in the $i$-th (resp.\ $\sigma(i)$-th) leaf, $\rho(f_i)(f_i^1,\ldots,f_i^d)$ is the wreath recursion of $f_i$, and $\sigma'\in S_{n+d-1}$ is the permutation induced by $\sigma$ and $\rho(f_i)$.\footnote{To give a precise definition of $\sigma'$, label the ordered leaves of $T_+'$ by $(1,\ldots, i-1,i1, \ldots, id, i+1,\ldots, n)$ and the ordered leaves of $T_-'$ by $(1,\ldots, \sigma(i)-1,\sigma(i)1,\ldots, \sigma(i)d,\sigma(i)+1,\ldots, n)$.
Then $\sigma'(j)=\sigma(j)$ for $j\neq i$ and $\sigma'(ik)=\sigma(i)(\rho(f_i)(k))$ for $j=i$.} As above we can define \emph{expansions} and \emph{elementary expansions} of triples as sequences of simple expansions. We will further call a triple $(F_-,\sigma(f_1,\ldots,f_n),F_+)$ a \emph{contraction} of a triple $(F'_-,\sigma'(f'_1,\ldots,f'_{n'}),F'_+)$ if $(F'_-,\sigma'(f'_1,\ldots,f'_{n'}),F'_+)$ is an expansion of $(F_-,\sigma(f_1,\ldots,f_n),F_+)$. We can now make precise when two triples represent the same element of $\AAut(\TT_d)$.

\begin{lemma}[{\cite[Lemma 2.11]{SkipperWitzelZaremsky19} and also \cite[Lemma 2.3]{LeBoudec2017}}]
    Two triples $[T_-,\sigma(f_1,\ldots,f_n),T_+]$ and $[U_-,\nu(g_1,\ldots,g_n),U_+]$ represent the same element of $\AAut(\TT_d)$ if and only if they have a common expansion.
\end{lemma}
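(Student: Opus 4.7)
The plan is to handle the two directions separately: the ``if'' direction reduces to a single application of the wreath recursion, while the ``only if'' direction is proved by refining both triples to a common source tree and then showing the resulting triples are literally equal.

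For the ``if'' direction it suffices, by induction and transitivity, to prove that a single simple expansion of a triple represents the same element of $\AAut(\TT_d)$ as the original. Expanding $[T_-,\sigma(f_1,\ldots,f_n),T_+]$ at the $i$-th leaf $v_i$ of $T_+$ (and correspondingly at $u_{\sigma(i)}$ on the $-$ side) introduces the states $f_i^1,\ldots,f_i^d$ coming from the wreath recursion $f_i=\rho(f_i)(f_i^1,\ldots,f_i^d)$. The identity $f_i(xw)=\rho(f_i)(x)f_i^x(w)$ for $x\in X_d$ and $w\in X_d^{\omega}$ shows that the prescription $v_iw\mapsto u_{\sigma(i)}f_i(w)$ on the cone $\CC_d(v_i)$ agrees on every subcone $\CC_d(v_ix)$ with the prescription $v_ixw\mapsto u_{\sigma(i)}\rho(f_i)(x)f_i^x(w)$ of the expanded triple, and nothing changes on the other cones. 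Hence both triples define the same almost automorphism.

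For the ``only if'' direction, suppose the two triples represent the same element $\phi\in\AAut(\TT_d)$. Let $T'_+$ be the smallest complete rooted subtree of $\TT_d$ containing both $T_+$ and $U_+$; this is obtained from each of $T_+$ and $U_+$ by a finite sequence of simple expansions at leaves. Using the ``if'' direction, expand both triples so that their source tree becomes $T'_+$, and denote the results by $[V_-,\tau(h_1,\ldots,h_N),T'_+]$ and $[W_-,\mu(k_1,\ldots,k_N),T'_+]$, where $v_1,\ldots,v_N$ are the ordered leaves of $T'_+$. By \Cref{def:almost-automorphisms}, $\phi$ maps $\CC_d(v_i)$ bijectively onto both the cone at the $\tau(i)$-th leaf of $V_-$ and the cone at the $\mu(i)$-th leaf of $W_-$. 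Two basic cones of $\CC_d$ are either disjoint or equal, so these cones coincide, which forces the corresponding leaves of $V_-$ and $W_-$ to be equal words in $X_d^{\ast}$. As a complete rooted subtree is determined by its leaf set, $V_-=W_-$; the lexicographic ordering then gives $\tau=\mu$, and matching the resulting actions on $\CC_d(v_i)$ pointwise yields $h_i=k_i$ in $\Aut(\TT_d)$. Thus the two expansions coincide as triples and $T'_+$ witnesses the required common expansion.

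The main obstacle is the bookkeeping: a simple expansion on the $+$ side triggers a simple expansion on the $-$ side together with a reshuffle of the middle entries via $\rho(f_i)$ and the states $f_i^1,\ldots,f_i^d$, and one must verify that iterating such expansions from $T_+$ and $U_+$ independently can be arranged to land at the common tree $T'_+$. Once this combinatorics is tracked, both directions reduce to formalities: the ``if'' direction is the wreath recursion, and the ``only if'' direction is the observation that an almost automorphism is uniquely determined by its action on any finite partition of $\CC_d$ into basic cones.
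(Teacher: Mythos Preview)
Your argument is correct, and in fact the paper does not prove this lemma at all: it is stated with a citation to \cite[Lemma 2.11]{SkipperWitzelZaremsky19} and \cite[Lemma 2.3]{LeBoudec2017} and no proof is supplied. Your approach---reducing the ``if'' direction to the wreath recursion identity and handling the ``only if'' direction by expanding both source trees to the common refinement $T'_+ = T_+ \cup U_+$ and reading off equality of the resulting triples from the action of $\phi$ on cones---is exactly the standard one in those references.

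One small inaccuracy: the sentence ``Two basic cones of $\CC_d$ are either disjoint or equal'' is not true as stated (the correct trichotomy is disjoint or one contained in the other). However, you do not actually need this fact: the cone at the $\tau(i)$-th leaf of $V_-$ and the cone at the $\mu(i)$-th leaf of $W_-$ are both literally equal to $\phi(\CC_d(v_i))$, so their equality is immediate. You might simply delete that clause.
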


Motivated by the latter result, we will denote the equivalence class of a triple $(F_-,\sigma(f_1,\ldots,f_n),F_+)$ with respect to sequences of expansions and contractions by $[F_-,\sigma(f_1,\ldots,f_n),F_+]$, which we will often shorten by writing $[F_-, (\sigma,f),F_+]$.
An advantage of expansions is that given two equivalence classes $[F_-,(\sigma,f),F_+]$ and $[U_-,(\nu,g),U_+]$ such that $F_+$ and $U_-$ have the same number of roots, there are expansions $[F'_-,(\sigma',f'),F_+']$ and $[U'_-,(\nu',g'),U_+']$ with $F_+'=U'_-$. This allows us to introduce a groupoid structure on the set $\mathcal{F}_d$ of equivalence classes of triples by defining the product
\[
    [F_-,(\sigma,f),F_+] [U_-,(\nu,g),U_+]:=[F'_-,(\sigma',f')\circ(\nu',g'), U_+'],
\]
which restricts to the group structure on $\AAut(\TT_d)$ when restricting to triples where both forests are trees. Restricting to triples $[F_-,(\sigma,f),F_+]$ such that the states of $f$ are in $G$ defines a subgroupoid $\mathcal{F}_d(G)$. Then $\mathcal{F}_d$ is the special case when $G=\Aut(\TT_d)$.
We finally have all ingredients to define the subsets
\[
    \mathcal{F}_{d,n,m}(G):=\left\{[F_-,(\sigma,f),F_+]\in \mathcal{F}_d\mid  \mbox{ $F_-$ $n$-rooted and $F_+$ $m$-rooted}\right\},
\]
which we will use in the next section to prove positive results on the finiteness properties of $V_d(G)$.

\section{Finiteness properties of R\"over--Nekrashevych groups}

In this section, we will run the standard Stein--Farley complex argument to prove that the homological and homotopical finiteness properties of $V_d(G)$ are at least as good as those of $G$. To our knowledge this has only been done for the homotopical finiteness properties $F_n$ in the literature. However, the proof adapts readily to the homological case by applying the homological version of Brown's criterion. Readers familiar with this approach can skip straight to Section \ref{sec:quasi-retracts}.

\subsection{A poset structure on $\mathcal{F}_d(G)$}

A \emph{split} is an element of $\mathcal{F}_d(G)$ of the form $[F,(\id,\id),1]$, where $1=1_n$ for some $n\in \N$ and $\id$ denotes the identity in $S_n$, respectively $n$ copies of the identity in $\Aut(\TT_d)$. We can equip $\mathcal{F}_d(G)$ with a poset structure by defining $[U_-,(\nu,g),U_+]\leq  [U_-,(\nu,g),U_+] [F,(\id,\id),1]$ for any split such that the product on the right is well-defined. It is an easy and standard check that this is indeed a well-defined poset structure and that we have equality if and only if $F=1_n$.
We call a split \emph{elementary} (resp.\ \emph{simple})  if $F$ is elementary (resp.\ simple). For $x,y\in \mathcal{F}_d(G)$, we write $x\preccurlyeq y$ if $y=xu$ for an elementary split $u$.
Note that we obtain an order preserving left action of $V_d(G)$ on the subset of the poset $\mathcal{F}_d(G)$ defined by
\[
    \mathcal{F}_{d,1}(G):=\bigsqcup_{n\geq 1} \mathcal{F}_{d,1,n}(G),
\]
on which we define a height function $h \colon \mathcal{F}_{d,1}(G)\to \mathbb{N}$ that takes an element $[T,(\sigma,f),F]\in \mathcal{F}_{d,1,n}(G)$ to $n$.
Note also that the left $V_d(G)$-action and the poset structure descend to a well-defined action and poset structure on the set
\[
    \mathcal{P}_d(G):= \bigsqcup_{n\geq 1} \mathcal{F}_{d,1,n}(G)/(S_n\wr G),
\]
where $S_n\wr G$ acts on the right on $\mathcal{F}_{d,1,n}(G)$ via its embedding
\[
\sigma(f_1,\ldots,f_n) \mapsto [1_n,\sigma(f_1,\ldots,f_n),1_n]
\]
into $\mathcal{F}_{d,n,n}(G)$.
The left $V_d(G)$-action on $\mathcal{P}_d(G)$ further induces an cellular action on the geometric realization $|\mathcal{P}_d(G)|$ of the poset and we can extend the height function $h$ affine linearly to an invariant height function on $h \colon |\mathcal{P}_d(G)|\to [0,\infty)$.
For $x,y\in \mathcal{P}_d(G)$ we write
\[
[x,y]=\left\{z\in \mathcal{P}_d(G)\mid x\leq z \leq y\right\}
\]
to denote the closed interval of vertices between $x$ and $y$ (and analogously $(x,y)$, $(x,y]$ and $[x,y)$). 
If $x\preccurlyeq y$, then the elements of $[x,y]$ form a Boolean lattice, since the process of performing the simple splits making up an elementary split is commutative. Thus, in this case the subcomplex of $|\mathcal{P}_d(G)|$ spanned by the vertices of $[x,y]$ naturally forms a cube with vertices the elements of $[x,y]$. One can check that any two such cubes intersect in a cube, implying that the subcomplex of $|\mathcal{P}_d(G)|$ consisting of simplices of the form $x_0< x_1 < \ldots < x_n$ and $x_0\preccurlyeq x_n$ admits the structure of a cube complex.
We will denote this cube complex by $\XX_d(G)$ and call it the \emph{Stein--Farley complex} of $V_d(G)$. We obtain an induced height function $h: \XX_d(G)\to [0,\infty)$ which is a Morse function in the sense of Bestvina and Brady \cite{BestvinaBrady97}.

\subsection{Homotopical and homological finiteness properties of $V_d(G)$}
The homotopical finiteness properties $F_n$ of $V_d(G)$ were shown in~\cite[Section 4]{SkipperWitzelZaremsky19} to be at least as good as those of $G$.
Here we will explain how a proof of this fact given in \cite[Section 3]{WuWuZhaoZhou25} for the homotopical finiteness properties $F_n$ in the case $d=2$ in conjunction with a homological version of Brown's lemma readily generalizes to a proof of the following result.

\begin{proposition}
\label{prop:positive-finiteness-properties}
    Let $G\leq \Aut(\TT_d)$ be a self-similar group. If $G$ is of type $\FP_n(R)$ for some unital commutative ring $R$, then $V_d(G)$ is of type $\FP_n(R)$.   
\end{proposition}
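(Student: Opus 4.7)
The strategy is to apply the homological Brown criterion to the action of $V_d(G)$ on the Stein--Farley complex $\XX_d(G)$ constructed above, filtered by the sublevel sets $\XX_d(G)^{\leq k} := h^{-1}([0,k])$ of the height function $h$. Four ingredients are required: (i) $\XX_d(G)$ is contractible; (ii) each $\XX_d(G)^{\leq k}$ has finitely many $V_d(G)$-orbits of cells; (iii) the cell stabilizers are of type $\FP_n(R)$; and (iv) the descending links with respect to $h$ are homologically $(n-1)$-connected over $R$ on all sufficiently high levels.

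Contractibility in (i) is the standard Stein--Farley fact, whose proof does not use any finiteness hypothesis on $G$ and is established in this generality in \cite{SkipperWitzelZaremsky19,WuWuZhaoZhou25}; it carries over unchanged. For (ii), vertices of $\XX_d(G)$ of height $n$ are represented by classes in $\mathcal{F}_{d,1,n}(G)/(S_n\wr G)$, and since the left $V_d(G)$-action absorbs the tree component of a representative once the forest shape is fixed, and there are only finitely many shapes of $n$-rooted complete forests with at most $k$ leaves, finiteness of the orbit set on each $\XX_d(G)^{\leq k}$ follows. Higher-dimensional cubes are handled analogously.

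For (iii), the stabilizer in $V_d(G)$ of a height-$n$ vertex is isomorphic to $S_n \wr G$, acting on the vertex by permuting the $n$ cones associated to the leaves and applying elements of $G$ coordinatewise via the self-similar structure. Stabilizers of higher-dimensional cubes are intersections of vertex stabilizers and are therefore finite-index subgroups of some $S_m \wr G$. Since the class $\FP_n(R)$ is closed under finite direct products (via K\"unneth), under extensions by finite groups, and under passage to finite-index sub- and supergroups, the hypothesis that $G$ is of type $\FP_n(R)$ propagates to all cell stabilizers. This is the only place in the argument where the hypothesis on $G$ enters directly.

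For (iv), the descending link at a height-$n$ vertex of $\XX_d(G)$ coincides as an abstract simplicial complex with the descending link used in the proof for the Higman--Thompson group $V_d$ itself, since it depends only on the combinatorics of simple splits at the leaves and is insensitive to $G$. Its connectivity grows without bound with $n$, as shown in \cite{SkipperWitzelZaremsky19,WuWuZhaoZhou25}, and since homotopical $(n-1)$-connectivity implies homological $(n-1)$-connectivity over any ring $R$, condition (iv) is met for all sufficiently large $k$. The expected point of friction is ingredient (iii): the bookkeeping for the stabilizer identification and the verification that finite-index subgroups of $S_m\wr G$ inherit type $\FP_n(R)$. Once this is in place, the proof reduces to substituting the homological Brown lemma for its homotopical counterpart in the argument of \cite[Section~3]{WuWuZhaoZhou25}, generalizing straightforwardly from $d=2$ to arbitrary $d$.
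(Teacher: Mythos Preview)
Your proposal is correct and follows essentially the same route as the paper: apply the homological Brown criterion to the Stein--Farley complex filtered by height, checking contractibility, cocompactness of sublevel sets, finiteness properties of stabilizers, and high connectivity of descending links.

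One inaccuracy worth flagging in (iv): the descending link at a height-$n$ vertex is \emph{not} insensitive to $G$ and does not coincide with the descending link for $V_d$ itself. Its vertices are classes $[1_n,(\sigma,f),F_J]$ modulo $S_{n-(d-1)|J|}\wr G$, so the group labels genuinely enter the combinatorics and the complex $\mathcal{E}_{d,n}(G)$ is larger when $G$ is nontrivial. What the cited references actually show is that there is a simplicial map $\mathcal{E}_{d,n}(G)\to\mathcal{M}_{d,n}$ onto the $G$-independent matching complex which is a complete join, and one then transfers the connectivity of $\mathcal{M}_{d,n}$ back to $\mathcal{E}_{d,n}(G)$ via \cite[Proposition~3.5]{HatcherWahl10}. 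Since you defer to the references for this step, the slip does not break your argument, but your stated reason for the connectivity is not the correct one.
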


The approach pursued in \cite{WuWuZhaoZhou25} is a standard approach which has been applied to many Thompson-like groups and our proof is a straight-forward generalization of theirs. We will thus only sketch the main steps of their proof and refer to their work for further details. Analogues of some of the results in this section were already proven in the context of homotopical finiteness properties of cloning systems, see~\cite[Section 3]{SkipperZaremsky24}.
For the reader not familiar with this approach, we also refer to \cite{Zaremsky21} for a detailed proof in the special case of Thompson's group $F$ that illustrates the main ideas. The proof of finiteness properties in \cite{SkipperWitzelZaremsky19} relies on previous work of Witzel \cite{Witzel19}; it is written in the context of homotopical finiteness properties, but ultimately employs Brown's criterion, which also holds for homological finiteness properties. Thus, a careful check of those two works should also show \Cref{prop:positive-finiteness-properties}. 
We start by recalling Brown's criterion in the homotopical and homological version.

\begin{proposition}\label{prop:Browns-criterion}
    Let $G$ be a group, let $X$ be a contractible affine $G$-CW complex and let $R\neq 0$ be a unital commutative ring. Assume that there is $n \in \mathbb{N}$ such that all cell stabilisers of $k$-cells are $\F_{n-k}$ (resp.\ $\FP_{n-k}(R)$). Let $h \colon X \to \R$ be a $G$-invariant height function. Assume that for every $a\in \R$ the sublevel set $X^{\leq a}=h^{-1}((-\infty,a])$ is $G$-cocompact and that there exists some $r\in \R$ such that the descending link of every vertex $x\in X$ with $h(x)\geq r$ is $(n-1)$-connected (resp.\ homologically $(n-1)$-connected over $R$). Then $G$ is of type $F_n$ (resp.\ of type $\FP_n(R)$). 
\end{proposition}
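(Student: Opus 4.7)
The plan is to combine Bestvina--Brady discrete Morse theory with the skeleton-based form of Brown's finiteness criterion. Since $X$ is contractible and its sublevel sets $X^{\leq a}$ are $G$-cocompact and exhaust $X$, it suffices to show that for $a$ sufficiently large the subspace $X^{\leq a}$ is $(n-1)$-connected (resp.\ $R$-homologically $(n-1)$-connected), and then feed it into the classical Brown criterion for groups acting cocompactly on a sufficiently connected complex with nice stabilisers.

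First, I would use the height function to analyse the passage from $X^{\leq a}$ to $X^{\leq b}$ for $r \leq a \leq b$. By the Bestvina--Brady Morse lemma, the homotopy type of $X^{\leq t}$ changes only as $t$ crosses values of the form $h(v)$ with $v$ a vertex, and at such a value the transition $X^{\leq t-\epsilon}\hookrightarrow X^{\leq t+\epsilon}$ is obtained up to homotopy by coning off the descending link $\lkd(v)$. For $t \geq r$ each $\lkd(v)$ is $(n-1)$-connected (resp.\ $R$-homologically $(n-1)$-connected), so every such cone attachment is an $n$-connected cofibration in the homotopical case, and induces isomorphisms in $H_k(-;R)$ for $k \leq n-1$ together with a surjection in $H_n(-;R)$ in the homological case, as one reads off the long exact sequence of the pair $(X^{\leq t+\epsilon}, X^{\leq t-\epsilon})$. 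Composing over all critical values in $(a,b]$ shows that the inclusion $X^{\leq a}\hookrightarrow X^{\leq b}$ has the same connectivity properties.

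Next, I would pass to the limit. Since $X=\bigcup_{b\geq r} X^{\leq b}$ is contractible and each map in the directed system is $(n-1)$-connected (resp.\ homologically so), a standard direct limit argument---using compactness of spheres for the homotopical case and exactness of filtered colimits of $R$-modules for the homological case---shows that $X^{\leq a}$ itself is $(n-1)$-connected (resp.\ $R$-homologically $(n-1)$-connected) for every $a \geq r$.

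Finally, fix such an $a$ and set $Y:=X^{\leq a}$. Then $Y$ is a $G$-cocompact $G$-CW complex with the required connectivity, and each $k$-cell has stabiliser of type $\F_{n-k}$ (resp.\ $\FP_{n-k}(R)$); this is precisely the set-up for Brown's criterion. In the homological case one runs the equivariant cellular chain complex $C_\ast(Y;R)$, whose low-degree homology vanishes by the previous step and whose terms are finite direct sums of permutation modules $R[G/G_\sigma]$; splicing in finite-type projective $RG_\sigma$-resolutions of $R$ for each cell stabiliser $G_\sigma$ yields a projective $RG$-resolution of $R$ that is finitely generated through degree $n$, so $G$ is of type $\FP_n(R)$. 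The homotopical version proceeds analogously by inductively attaching classifying spaces of stabilisers onto $Y/G$ to build a $K(G,1)$ with finite $n$-skeleton. The main technical obstacle is making the Morse-theoretic analysis rigorous in the affine $G$-CW setting, since the sublevel sets $X^{\leq t}$ are not subcomplexes; this can be handled either by deformation-retracting each affine cell onto the face on which $h$ attains its minimum, or by running the attachment argument directly with affine cells as in the original Bestvina--Brady paper.
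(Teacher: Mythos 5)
Your argument is correct and is precisely the standard combination of Bestvina--Brady Morse theory (to show the sublevel sets $X^{\leq a}$ with $a\geq r$ are $(n-1)$-connected, resp.\ $R$-homologically $(n-1)$-connected) with Brown's criterion for cocompact actions with stabilisers of the appropriate finiteness type, which is exactly what the paper's proof does --- except that the paper merely cites this combination (pointing to references for the homotopical version and noting that the homological analogues of all required results appear in Brown's and Bestvina--Brady's papers) rather than spelling it out. Your degree bookkeeping in the long exact sequence and your handling of the affine-cell issue are consistent with those sources, so there is no gap.
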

\begin{proof}
    This is a well-known consequence of combining Brown's criterion \cite{Brown87b} and Bestvina--Brady Morse theory \cite{BestvinaBrady97}, see e.g.~\cite[Proposition 5.15]{GenevoisLonjouUrech22} and \cite[Theorem 5.1]{BuxLlosaIsenrichWu24} for homotopical versions. For the homological versions one checks that homological analogues of all results required for the homotopical version appear in \cite{Brown87b} and  \cite{BestvinaBrady97}.
\end{proof}

To deduce the finiteness properties of $V_d(G)$ and thus prove \Cref{prop:positive-finiteness-properties}, we will apply \Cref{prop:Browns-criterion} to  $h: \XX_d(G)\to \R$. We will sketch the main steps of the argument, following \cite[Section 3]{WuWuZhaoZhou25}.

\begin{proposition}\label{prop:simple-connectivity-and-cocompactness}
    For any self-similar group $G\leq \Aut(\TT_d)$ the complexes $|\mathcal{P}_d(G)|$ and $\XX_d(G)$ satisfy the following properties:
    \begin{enumerate}
        \item $|\mathcal{P}_d(G)|$ and $\XX_d(G)$ are contractible
        \item for every $n\in \mathbb{N}$ the $V_d(G)$-action on the subcomplex $\XX_d(G)^{\leq n}$ spanned by the vertices of height $\leq n$ is cocompact and transitive on vertices of a fixed height.
    \end{enumerate}
\end{proposition}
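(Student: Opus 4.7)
The plan is to handle the two statements in turn: first establish contractibility of both complexes via the standard Stein--Farley strategy, and then use the groupoid structure on $\mathcal{F}_d(G)$ to analyze the $V_d(G)$-action on sublevel sets.

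For the contractibility of $|\mathcal{P}_d(G)|$, the key point is that the poset $\mathcal{P}_d(G)$ is directed: given any two classes $x, y$ with representatives $[T_x,(\sigma_x,f_x),F_x]$ and $[T_y,(\sigma_y,f_y),F_y]$, one produces a common upper bound by performing sufficiently deep simultaneous splits of $F_x$ and $F_y$ until they agree. Since any finite collection of vertices admits a common upper bound, the geometric realization is a filtered colimit of cones and hence contractible.

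The contractibility of $\XX_d(G)$ is more delicate, since this cube complex contains only those simplices $x_0 < \cdots < x_k$ of $|\mathcal{P}_d(G)|$ for which $x_0\preccurlyeq x_k$. Following the approach of~\cite[Section 3]{WuWuZhaoZhou25} and~\cite[Section 4]{SkipperWitzelZaremsky19}, I would show that the inclusion $\XX_d(G)\hookrightarrow|\mathcal{P}_d(G)|$ is a homotopy equivalence by collapsing the non-cubical simplices via a Morse-theoretic argument based on the canonical smallest elementary-split ancestor of each vertex. I expect this step to be the main technical ingredient, although it is by now a routine adaptation of arguments appearing in the Thompson-like literature.

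For vertex-transitivity at each height, the groupoid structure gives a clean proof. Given two height-$n$ vertices $x, y \in \mathcal{P}_d(G)$, lift them to representatives $\tilde x, \tilde y \in \mathcal{F}_{d,1,n}(G)$. Their $F_+$-components are both $n$-rooted forests and hence admit a common expansion, so the groupoid product $g \defeq \tilde y \cdot \tilde x^{-1}$ is well-defined and lies in $\mathcal{F}_{d,1,1}(G) = V_d(G)$; by construction $g\cdot\tilde x = \tilde y$, which descends to $g\cdot x = y$ in the quotient by $S_n\wr G$.

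For cocompactness of $\XX_d(G)^{\leq n}$, transitivity at each height yields at most $n$ vertex orbits. Every cube arises from an interval $[x,y]$ with $x\preccurlyeq y$ and $h(y)\leq n$, and after translating $x$ to a canonical representative of its height $k\leq n$ by an element of $V_d(G)$, the possible $y$'s correspond to elementary forests $F$ with $y = x\cdot[F,(\id,\id),1]$ of height at most $n$. Such $F$ have at most $n$ leaves and hence form a finite set, which gives finitely many cube orbits in $\XX_d(G)^{\leq n}$ and thus cocompactness of the action.
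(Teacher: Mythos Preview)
Your proposal is essentially correct and follows the same standard Stein--Farley strategy as the paper, which itself only sketches the argument with references to \cite{WuWuZhaoZhou25}. Your treatment of vertex-transitivity via the groupoid product and the ensuing finiteness of cube orbits is exactly the argument the paper alludes to.

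Two points deserve sharpening. First, your directedness step (``split $F_x$ and $F_y$ until they agree'') is really just a restatement of the claim rather than an argument; the actual reason is that for lifts $\tilde x,\tilde y\in\mathcal{F}_{d,1}(G)$ the groupoid element $\tilde x^{-1}\tilde y$ can be written as $[E_-,(\tau,g),E_+]$, and then $\tilde x\cdot[E_-,(\id,\id),1]$ and $\tilde y\cdot[E_+,(\id,\id),1]$ represent a common upper bound after passing to $\mathcal{P}_d(G)$. Second, the contractibility of $\XX_d(G)$ in the paper (and in \cite{WuWuZhaoZhou25,SkipperWitzelZaremsky19}) is \emph{not} obtained by a Morse-theoretic collapse of non-cubical simplices; rather one shows, using Quillen's conical contractibility, that for $x<y$ with $x\not\preccurlyeq y$ the open interval $(x,y)$ is contractible---the key auxiliary object being the \emph{largest} $z$ with $x\preccurlyeq z\leq y$, not a ``smallest elementary-split ancestor''---and then observes that $|\mathcal{P}_d(G)|$ is built from $\XX_d(G)$ by coning off such intervals. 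Your phrasing suggests a different mechanism than the one actually used.
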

\begin{proof}[Sketch of proof]
   The proofs are standard. We thus only indicate the main steps and refer to \cite[Section 3.3]{WuWuZhaoZhou25} and the references therein for details. The contractibility of $|\mathcal{P}_d(G)|$ follows from the fact that the poset $(\mathcal{P}_d(G),\leq)$ is directed, that is, for any $x,y\in \mathcal{P}_d(G)$ there is a $z\in \mathcal{P}_d(G)$ with $z\geq x,y$ (see \cite[Lemma 3.6]{WuWuZhaoZhou25}). To see the contractibility of $\XX_d(G)$ one first uses an argument based on a result of Quillen to show that intervals $(x,y)$ with $x\leq y$ and $x\not\preccurlyeq y$ are contractible (see \cite[Proposition 3.8]{WuWuZhaoZhou25}). Since $|\mathcal{P}_d(G)|$ can be obtained from $\XX_d(G)$ by attaching intervals along contractible subspaces, this implies that the inclusion $\XX_d(G)\hookrightarrow |\mathcal{P}_d(G)|$ is a homotopy equivalence (see \cite[Proposition 3.9]{WuWuZhaoZhou25}), proving (1). For (2) one checks explicitly that the $V_d(G)$-action on $\XX_d(G)$ has a single orbit of vertices of height $k\in \mathbb{N}$ for every $k$, which readily implies the assertion (see \cite[Proposition 3.7]{WuWuZhaoZhou25}). 
\end{proof}

The only place where the finiteness properties of $G$ enter the proof of the finiteness properties of $V_d(G)$ is via the finiteness properties of cell stabilisers. We thus give a detailed proof of the computation of the cell stabilisers.

\begin{proposition}\label{prop:cell-stabilisers}
    Let $G\leq \Aut(\TT_d)$ be self-similar of type $F_m$ (resp.\ $\FP_m(R)$). Then the stabiliser of a cube $[x,y]$ in $\XX_d(G)$ with $h(x)=n$ is a finite index subgroup of $S_n\wr G$ and, in particular, of type $F_m$ (resp.\ $\FP_m(R)$).
\end{proposition}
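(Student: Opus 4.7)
The plan is threefold: first, identify the vertex stabiliser of a height-$n$ vertex $x$ with $S_n \wr G$ via a conjugation isomorphism; second, reduce the stabiliser of the cube $[x,y]$ to the intersection of the vertex stabilisers of $x$ and $y$; third, use the combinatorics of elementary splits to show that the extra condition of fixing $y$ cuts out a finite-index subgroup of $\mathrm{stab}(x) \cong S_n \wr G$.

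For the vertex stabiliser, fix a representative $\tilde x = [T,(\sigma,f),1_n] \in \mathcal F_{d,1,n}(G)$ of $x$ and consider
\[
\Phi_{\tilde x} \colon S_n \wr G \to V_d(G), \qquad s \mapsto \tilde x \cdot s \cdot \tilde x^{-1},
\]
where $S_n \wr G$ is identified with $\{[1_n,\tau(h_1,\ldots,h_n),1_n] : \tau \in S_n,\ h_i \in G\}$ inside the groupoid $\mathcal F_d(G)$ and all products are groupoid products. Self-similarity of $G$ ensures that the resulting element has states in $G$, and since the top and bottom of $\tilde x \cdot s \cdot \tilde x^{-1}$ are both the tree $T$, it lies in $V_d(G)$. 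The map $\Phi_{\tilde x}$ is an injective group homomorphism. For surjectivity onto $\mathrm{stab}_{V_d(G)}(x)$, note that $g \in V_d(G)$ fixes $x$ if and only if $g \cdot \tilde x$ lies in the $(S_n \wr G)$-orbit of $\tilde x$, i.e.\ $g \cdot \tilde x = \tilde x \cdot s$ for a unique $s$, equivalently $g = \Phi_{\tilde x}(s)$. Hence $\mathrm{stab}_{V_d(G)}(x) \cong S_n \wr G$.

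Since $V_d(G)$ acts by order-preserving automorphisms of $\mathcal P_d(G)$ and the cube $[x,y]$ corresponds to a Boolean interval with unique minimum $x$ and unique maximum $y$, an element stabilises the cube setwise iff it fixes both $x$ and $y$; thus $\mathrm{stab}([x,y]) = \mathrm{stab}(x) \cap \mathrm{stab}(y)$. The relation $x \preccurlyeq y$ is encoded by a unique subset $J \subseteq \{1,\ldots,n\}$ via the elementary split $\tilde y = \tilde x \cdot [F_J^{(n)},(\id,\id),1_m]$ with $m = n + (d-1)|J|$; uniqueness of $J$ follows because in the normalised representative $\tilde y = [T_{\sigma(J)},(\sigma',f'),1_m]$ the top tree $T_{\sigma(J)}$ is obtained from $T$ by attaching carets at the positions $\sigma(J)$, and the right $S_m \wr G$-action does not alter the top tree. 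Computing $\Phi_{\tilde x}(s) \cdot \tilde y = \tilde x \cdot s \cdot [F_J^{(n)},(\id,\id),1_m]$ by expanding $s = [1_n,\tau(h_1,\ldots,h_n),1_n]$ to match $F_J^{(n)}$ via the wreath recursion yields a triple of the form $[T_{\sigma\tau(J)}, \ldots, 1_m]$, where $\tau$ denotes the $S_n$-image of $s$. This agrees with $\tilde y$ modulo the right $S_m \wr G$-action iff $\sigma\tau(J) = \sigma(J)$, i.e.\ iff $\tau$ preserves $J$ setwise; the $G$-valued components $h_i$ of $s$ can always be absorbed by the right action. The setwise stabiliser of $J$ in $S_n$ is $S_J \times S_{\{1,\ldots,n\}\setminus J}$, of index $\binom{n}{|J|}$, so its preimage in $S_n \wr G$ is a finite-index subgroup.

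To conclude, $G^n$ inherits $\F_m$ (resp.\ $\FP_m(R)$) from $G$ by stability under finite direct products, so $S_n \wr G = G^n \rtimes S_n$ does too since $G^n$ has finite index in it, and therefore so does any finite-index subgroup. The principal technical obstacle is the groupoid bookkeeping in step three: one must carefully track how the leaf permutation $\tau$ interacts with the caret structure of $F_J^{(n)}$ after expansion, and verify that the $h_i \in G$ are always compensated by the right $S_m \wr G$-action on $\tilde y$ irrespective of their values, so that the only genuine constraint for fixing $y$ is the purely combinatorial condition $\tau(J) = J$.
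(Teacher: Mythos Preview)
Your proof is correct and follows essentially the same approach as the paper's own argument: identify the vertex stabiliser with $S_n\wr G$ via groupoid conjugation, reduce the cube stabiliser to $\mathrm{stab}(x)\cap\mathrm{stab}(y)$, and then verify by the same expansion bookkeeping that the constraint imposed by fixing $y$ is precisely $\tau(J)=J$, yielding a finite-index subgroup. Your write-up is in fact slightly more explicit than the paper's (you justify why setwise stabilising the cube forces fixing both endpoints, record the exact index $\binom{n}{|J|}$, and spell out why $S_n\wr G$ inherits the finiteness properties of $G$), but the strategy and computations are the same.
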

\begin{proof}
    The proof is analogous to the proofs of \cite[Lemma 3.10 and 3.11]{WuWuZhaoZhou25}. We first prove the assertion for vertices.
    Let $x\in \XX_d(G)$ be a vertex with $h(x)=n$. Then $x$ is represented by an element of the form $[T,(\sigma,f),F]\in \mathcal{F}_{d,1,n}(G)$. Let $\gamma=[T_-,(\nu,g),T_+]\in V_d(G)$ with $\gamma x = x$. Equivalently, there is an element $[1_n,(\mu,h),1_n] \in S_n \wr G$ such that the following identity holds in $\mathcal{F}$:
    \begin{align*}
        [T,(\sigma,f),F]&= [T_-,(\nu,g),T_+][T,(\sigma,f),F][1_n,(\mu,h),1_n],
    \end{align*}
    which in turn means that $\gamma$ has the form
    \begin{equation}\label{eq:point-stabiliser}
        \gamma=[T_-,(\nu,g),T_+] = [T,(\sigma,f),F][1_n,(\mu,h)^{-1}, 1_n][F,(\sigma,f)^{-1},T].
    \end{equation}
    We deduce that ${\rm Stab}_{V_d(G)}(x) \cong S_n\wr G$, which is of type $\F_m$ (resp.\ $\FP_m(R)$) if and only if $G$ is.

    Now assume that $[x,y]$ is a cube in $\XX_d(G)$ with $h(x)=n$. Then $${\rm Stab}_{V_d(G)}([x,y])={\rm Stab}_{V_d(G)}(x)\cap {\rm Stab}_{V_d(G)}(y)\leq {\rm Stab}_{V_d(G)}(x) \cong S_n\wr G$$ and we will show that this inclusion is of finite index. Since $x\preccurlyeq y$, there is a representative $[T,(\sigma,f),F]$ for $x$ and an elementary forest $F_J=F_J^{(n)}$, such that
    \[
    [T,(\sigma,f),F][F_J,(\id,\id),1_{n+(d-1)k}]
    \]
    is a representative for $y$. We argue as above that any $\gamma\in {\rm Stab}_{V_d(G)}(x)$ is as in \Cref{eq:point-stabiliser}. If $\gamma y = y$ there has to be some $(\alpha,r)\in S_{n+(d-1)k}\wr G$ such that (after left multiplying by $[F,(\sigma,f)^{-1},T]$) we have 
    \begin{align*}
        &\ [1_n,(\mu,h)^{-1},1_n][F_J,(\id,\id),1_{n+(d-1)k}]\\=&\ [F_J,(\id,\id),1_{n+(d-1)k}]\cdot [1_{n+(d-1)k},(\alpha,r),1_{n+(d-1)k}]\\
        =&\ [F_J,(\alpha,r),1_{n+(d-1)k}].
    \end{align*}
    This is the case if and only if $\mu(J)=J$. Thus,
    \[
        {\rm Stab}_{V_d(G)}([x,y])\cong \left\{(\mu,h)\in S_n\wr G\mid \mu(J)=J\right\},
    \]
    which is a finite index subgroup of $S_n\wr G$.
\end{proof}

The following result is the last remaining ingredient that we need in order to prove Proposition~\ref{prop:positive-finiteness-properties}.

\begin{proposition}\label{prop:connectivity-of-descending-links}
Let $G\leq \Aut(\TT_d)$ be self-similar. Then for every $k\in \N$ there is a number $n_k\in \N$ such that for every $x\in \XX_d(G)$ with $h(x)\geq n_k$ the descending link $\lkd (x)$ is $k$-connected.
\end{proposition}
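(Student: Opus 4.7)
The plan is to identify the descending link $\lkd(x)$ combinatorially and show that it does not depend, up to isomorphism, on either the self-similar group $G$ or the specific representative of $x$, so that its connectivity reduces to a purely combinatorial question about a complex built from the $d$-ary forest and permutation data. First I would fix a vertex $x \in \XX_d(G)$ of height $n$, choose a representative $[T, (\sigma, f), F]$ for $x$, and analyze the poset of $y \in \mathcal{P}_d(G)$ with $y \preccurlyeq x$ strictly. Because the interval between such a $y$ and $x$ is Boolean, $\lkd(x)$ is the order complex of the poset of proper elementary contractions of $x$.

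Next I would show that this poset is independent of $G$. An elementary contraction corresponds to choosing, after a suitable common expansion, a set $J$ of carets in the top forest that are simultaneously deleted together with the wreath-recursion combination of the corresponding states and block of the permutation. The self-similarity of $G$ guarantees that whenever we combine states $g_1, \ldots, g_d \in G$ through a rooted permutation we obtain an element of $G$, since every such combined element arose as an expansion of an element of $G$ in the first place; in particular, the combinatorics of allowed contractions depend only on $n$ and $d$, not on $G$ or on $(\sigma, f)$. This lets me identify $\lkd(x)$ with a standard $d$-ary matching-type complex $\mathcal{M}_n^{(d)}$: its vertices are the positions at which a single caret can be contracted, and its simplices are collections of mutually compatible (i.e.\ leaf-disjoint) such positions, equipped with their associated Boolean poset structure.

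Finally, I would invoke the known high connectivity of these complexes. The corresponding statement is proven in \cite{WuWuZhaoZhou25} for $d=2$ and in \cite[Section 4]{SkipperWitzelZaremsky19} for general $d$, giving a linear lower bound on the connectivity of $\mathcal{M}_n^{(d)}$ as a function of $n$. This means that for every $k \in \N$ one can find $n_k$ such that $\mathcal{M}_n^{(d)}$ is $k$-connected whenever $n \geq n_k$, which yields the proposition.

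The main obstacle I expect is the bookkeeping in step two: correctly verifying that the permutation part $\sigma$ and the state data $f$ do not obstruct the identification $\lkd(x) \cong \mathcal{M}_n^{(d)}$. In particular, one must check that left multiplication by $[T, (\sigma, f)^{-1}, T]$ (or, after expansion, the analogous element) carries the descending link at $x$ to the descending link at the vertex of height $n$ represented by $[1_n, (\id, \id), 1_n]$, thereby trivializing the permutation/state decoration. Once this reduction is made, the connectivity statement is purely combinatorial and follows from the cited literature.
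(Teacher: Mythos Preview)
Your proposal contains a genuine gap in step two. The claim that ``self-similarity of $G$ guarantees that whenever we combine states $g_1,\ldots,g_d \in G$ through a rooted permutation we obtain an element of $G$'' is false. Self-similarity is a \emph{downward} closure condition: it says that the states $g^u$ of any $g \in G$ lie in $G$. It does \emph{not} say that for arbitrary $g_1,\ldots,g_d \in G$ and $\rho \in S_d$ the element $\rho(g_1,\ldots,g_d)$ lies in $G$. For a concrete counterexample, take $G = \langle a \rangle \leq \Aut(\TT_2)$ the binary adding machine, where $a = \sigma(1,a)$; then $(a,1)$ (with trivial root permutation) is not in $G$.

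Because of this, your identification $\lkd(x) \cong \mathcal{M}_n^{(d)}$ fails. A vertex of the descending link records not just which $d$ leaves are grouped into a caret, but also the permutation--state data $\rho(g_1,\ldots,g_d)$ decorating that caret, taken modulo those decorations that actually come from an element of $G$ via wreath recursion. The fibre over a $d$-subset is therefore the coset space $(S_d \ltimes G^d)/G$, which is typically infinite. The paper's proof handles this by exhibiting the surjection $\pi \colon \mathcal{E}_{d,n}(G) \to \mathcal{M}_{d,n}$ as a \emph{complete join} (preimages of simplices are joins of preimages of vertices) and then invoking \cite[Proposition~3.5]{HatcherWahl10} to transfer the weak Cohen--Macaulay property of $\mathcal{M}_{d,n}$ up to $\mathcal{E}_{d,n}(G)$. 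Your reduction to the transitivity of the $V_d(G)$-action on height-$n$ vertices is correct and useful, but it only identifies all descending links at height $n$ with $\mathcal{E}_{d,n}(G)$; it does not collapse $\mathcal{E}_{d,n}(G)$ to the matching complex. To repair the argument you need to insert the complete-join step (or an equivalent device) between your identification of $\lkd(x)$ and the connectivity of $\mathcal{M}_{d,n}$.
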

\begin{proof}[Sketch of proof]

In analogy to \cite[Section 3.4]{WuWuZhaoZhou25} this follows by reduction to the connectivity properties of a certain matching complex $\mathcal{M}_{d,n}$. The reduction step is completely analogous to the one in \cite{WuWuZhaoZhou25}, modulo checking that one can replace $2$-element sets by $d$-element sets everywhere (in particular it is independent of the finiteness properties of $G$). We will thus only indicate the main steps of the reduction argument and refer the reader to \cite[Section 3.4]{WuWuZhaoZhou25} for details.
One first proves that for $x\in \XX_d(G)$ with $h(x)=n$ the descending link is isomorphic to the flag complex $\mathcal{E}_{d,n}(G)$ whose $(|J|-1)$-simplices are elements of the form
\[
[1_n,(\sigma,f),F_J^{(n-(d-1)|J|)}]\in \mathcal{F}_{d,n,n-(d-1)|J|}(G)/(S_{n-(d-1)|J|}\wr G)
\]
with face inclusion
\[
[1_n,(\sigma,f),F_J^{(n-(d-1)|J|)}]\subseteq [1_n,(\sigma',f'),F_{J'}^{(n-(d-1)|J'|)}]
\]
if and only if
\[
[1_n,(\sigma',f'),F_{J'}^{(n-(d-1)|J'|)}]\leq [1_n,(\sigma,f),F_J^{(n-(d-1)|J|)}].
\]

Then one verifies that there is a well-defined map to the matching complex $\mathcal{M}_{d,n}$ of $d$-element subsets of $\left\{1,\ldots,n\right\}$, which takes the $(|J|-1)$-simplex $$[1_n,(\sigma,f),F_J^{(n-(d-1)|J|)}]$$ to the $(|J|-1)$-simplex \footnote{Recall that the matching complex $\mathcal{M}_{d,n}$ is the simplicial complex whose vertices are the $d$-element subsets of $\left\{1,\ldots,n\right\}$ and whose $(k-1)$-simplices are $k$-tuples of pairwise disjoint $d$-element subsets of $\left\{1,\ldots,n\right\}$.} \footnote{Note that there is a slight difference here with respect to the map in \cite{WuWuZhaoZhou25}. This is because we are following the convention in \cite{SkipperWitzelZaremsky19}, where the permutation $\sigma$ maps leaves of $F_J^{(n-(d-1)|J|)}$ to $1_n$ while \cite{WuWuZhaoZhou25} map leaves of $1_n$ to $F_J^{(n-(d-1)|J|)}$. }
\[
	\left\{\left\{\sigma(i_1),\ldots,\sigma(i_d)\right\}\mid \mbox{ there is a caret in $F_J^{(n-(d-1)|J|)}$ with leaves $\left\{i_1,\ldots, i_d\right\}$}\right\}.
\]
For the details on the above steps of the proof, we refer to the discussion in the beginning of \cite[Section 3.4]{WuWuZhaoZhou25}.

Next one shows (see \cite[Lemmas 3.13 and 3.14]{WuWuZhaoZhou25}) that the map $\pi: \mathcal{E}_{d,n}(G)\to \mathcal{M}_{d,n}$ is a complete join, that is, $\pi$ is surjective and for every simplex $\alpha=\left\{v_0,\ldots, v_k\right\}$ of $\mathcal{M}_{d,n}$ the preimage is $\pi^{-1}(\alpha)=\pi^{-1}(v_0)\ast\ldots\ast\pi^{-1}(v_k)$. 
One can now deduce connectivity properties of $\mathcal{E}_{d,n}(G)$ from those of $\mathcal{M}_{d,n}$ by applying \cite[Proposition 3.5]{HatcherWahl10}. For this one needs to show that $\mathcal{M}_{d,n}$ is weakly Cohen--Macaulay of dimension going to $\infty$ as $n\to \infty$ (see \cite[Proof of Corollary 3.15]{WuWuZhaoZhou25} for the case $d=2$).
This is well-known (see e.g. \cite[Theorem 4.1 and Corollary 4.2]{BjornerLovaszetal94}).
For the readers convenience we include a brief proof based on the work of Belk and Forrest \cite{BelkForrest19}. By definition, for every simplex $\alpha$ of $\mathcal{M}_{d,n}$, every vertex $v\in \mathcal{M}_{d,n}$ is connected to all but at most $d$ vertices of $\alpha$. Moreover, the maximal dimension of a simplex in $\mathcal{M}_{d,n}$ is $\left\lfloor \frac{n}{d}\right\rfloor -1$.
It follows that the complex $\mathcal{M}_{d,n}$ is $(\left\lfloor \frac{n}{d}\right\rfloor -1,d)$-grounded in the sense of \cite[Definition 4.8]{BelkForrest19} and thus $(\left\lfloor \frac{n-d}{d^2}\right\rfloor -1)$-connected by \cite[Theorem 4.9]{BelkForrest19}.
Moreover, the link of a $k$-simplex in $\mathcal{M}_{d,n}$ is isomorphic to $\mathcal{M}_{d,n-dk}$, thus $(\left\lfloor \frac{n-dk-d}{d^2}\right\rfloor -1)$-connected. 
In particular, $\mathcal{M}_{d,n}$ is weakly Cohen--Macaulay of dimension $\left\lfloor \frac{n-d}{d^2}\right\rfloor -1$. Thus, \cite[Proposition 3.5]{HatcherWahl10} implies that for every $x\in \N$ with $h(x)=n$ the descending link $\lkd(x)\cong \mathcal{E}_{d,n}(G)$ is $(\left\lfloor \frac{n-d}{d^2}\right\rfloor -1)$-connected. In particular, the connectivity goes to $\infty$ as $n\to \infty$, completing the proof. 

\end{proof}

\begin{proof}[Proof of \Cref{prop:positive-finiteness-properties}]
    By \Cref{prop:simple-connectivity-and-cocompactness,prop:cell-stabilisers,prop:connectivity-of-descending-links}, this is now a direct consequence of applying \Cref{prop:Browns-criterion} to $h: \XX_d(G)\to [0,\R)$. 
\end{proof}

\section{Quasi-retracts and Dehn functions}\label{sec:quasi-retracts}

Recall first that a group $Q$ is called a \emph{retract} of a group $G$ if there is a pair of group homomorphisms
\[ Q \overset{i}{\hookrightarrow} G \overset{r}{\twoheadrightarrow} Q\]
such that $r\circ i$ is the identity on $Q$. Suppose $Q$ is a retract of $G$. Then if $G$ is of type $\F_n$, so is $Q$, see for example \cite[Proposition 4.1]{Bux04}.  The same holds if one replaces retract by quasi-retract. Let us make this precise.
Recall that a function $f\colon X\to Y$ between two metric spaces is said to be \emph{coarse Lipschitz}  if there exist constants $C,D>0$ so that 
\[
d(f(x),f(x')) \leq Cd(x,x')+D \text{ for all } x,x'\in X.
\]
For example, any homomorphism between finitely generated groups is coarse Lipschitz with respect to the word metrics.
A function $\rho \colon X\to Y$ is said to be a quasi-retraction if it is coarse Lipschitz and there exists a coarse Lipschitz function $\iota \colon Y\to X$ and a constant $E>0$ so that $d(\rho\circ \iota(y),y)\leq E$ for all $y\in Y$. If such a function exists, $Y$ is said to be a quasi-retract of $X$. 

\begin{theorem}\cite[Theorem 8]{Alonso94}\label{thm-quasi-re-fin}
Let $G$ and $Q$ be finitely generated groups such that $Q$ is a quasi-retract of $G$ with respect to word metrics corresponding to some finite generating sets of $G$ and $Q$.
Then
\begin{enumerate}
    \item if $G$ is of type $\F_n$, so is $Q$,
    \item if $G$ is of type $\FP_n(R)$ for some unital commutative ring $R\neq 0$, so is $Q$.
\end{enumerate}
\end{theorem}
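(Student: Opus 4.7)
The plan is to extend the classical argument for honest retracts to the coarse setting, by transporting highly connected complexes from $G$ to $Q$ via the coarse Lipschitz maps $\iota \colon Q \to G$ and $\rho \colon G \to Q$. Recall that $G$ is of type $\F_n$ (resp.\ $\FP_n(R)$) if and only if $G$ admits a free cellular action on an $(n-1)$-connected CW complex $X$ (resp.\ a homologically $(n-1)$-connected $R$-acyclic-in-low-degrees complex) whose $n$-skeleton is $G$-cocompact; this is the setup for the criterion recalled in Proposition~\ref{prop:Browns-criterion}.

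Starting from such a $G$-CW complex $X$, I would build a $Q$-CW complex $Y$ with the analogous properties. First, I would equip $Q$ with a Rips-type thickening $P_R(Q)$ of large enough parameter $R$, on which $Q$ acts freely and cocompactly and which is contractible for $R$ sufficiently large relative to the finiteness properties we wish to transfer. Then, using $\iota$, I would form a $Q$-equivariant mapping-cylinder-style construction $Y$ combining $P_R(Q)$ with $X$: at each $q \in Q$ one glues in a controlled neighborhood of $\iota(q)$ in $X$, and $Q$ acts diagonally via its action on $P_R(Q)$ and (through $\iota$) on neighborhoods in $X$. The key point is that every sphere in $Y$ of dimension $\leq n-1$ can be pushed into $X$ along $\iota$, filled there using the $(n-1)$-connectedness of $X$, and the filling pulled back $Q$-equivariantly to $Y$; the inequality $d(\rho \circ \iota(y),y) \leq E$ together with the coarse Lipschitz property of $\rho$ gives precisely the control needed to absorb the defect between $\rho \circ \iota$ and the identity into the thickness of $P_R(Q)$.

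The main obstacle is that $\iota$ and $\rho$ are only coarse Lipschitz maps, not group homomorphisms, so no straightforward equivariant pullback is available. To handle this, one replaces point images by neighborhoods of bounded diameter (the thickness governed by the coarse Lipschitz constants and $E$) and verifies $Q$-cocompactness of $Y^{(n)}$ from the $G$-cocompactness of $X^{(n)}$ together with the fact that $Q$-orbits on $P_R(Q)$ are finite. Once $Y$ is verified to be $(n-1)$-connected (resp.\ homologically $(n-1)$-connected over $R$) with a $Q$-cocompact $n$-skeleton, the homotopical (resp.\ homological) version of Brown's criterion in Proposition~\ref{prop:Browns-criterion} yields that $Q$ is of type $\F_n$ (resp.\ $\FP_n(R)$), completing the proof.
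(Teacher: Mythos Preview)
The paper's own ``proof'' is not a proof at all: it defers entirely to Alonso's original argument in \cite{Alonso94} for part~(1) and for part~(2) with $R=\Z$, and then observes that the $\FP_n(R)$ case over a general ring follows because the sole ring-dependent ingredient in Alonso's proof, namely Brown's criterion \cite[Theorem~2.2]{Brown87b}, is valid over any commutative ring.

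Your proposal, which attempts an independent argument, has two genuine gaps. First, the claim that the Rips complex $P_R(Q)$ is ``contractible for $R$ sufficiently large'' is false for a general finitely generated group $Q$; this is a characterization of hyperbolicity, and it fails already for any group that is not finitely presented. More to the point, high connectivity of the Rips complexes of $Q$ at large scale is essentially equivalent to $Q$ having good finiteness properties, which is precisely what you are trying to prove --- so invoking it is circular. Second, the phrase ``$Q$ acts diagonally via its action on $P_R(Q)$ and (through $\iota$) on neighborhoods in $X$'' does not define an action: $\iota \colon Q \to G$ is only coarse Lipschitz, not a group homomorphism, so $Q$ has no action on $X$ or on any piece of it via $\iota$. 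Without a genuine cellular $Q$-action on your complex $Y$, Brown's criterion (Proposition~\ref{prop:Browns-criterion}) cannot be applied. The workable route, which is what Alonso actually does, avoids both issues by using a characterization of $\F_n$ and $\FP_n$ in terms of coarse filling conditions for spheres in Cayley complexes; such conditions transfer directly along pairs of coarse Lipschitz maps satisfying $d(\rho\circ\iota,\id)\leq E$, with no need to produce a $Q$-equivariant complex out of $X$.
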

\begin{proof}
   In \cite[Theorem 8]{Alonso94}, assertion (2) is only stated for $R=\mathbb{Z}$. However, the only place where $R=\mathbb{Z}$ is used in the proof is when \cite[Theorem 2.2]{Brown87b} is invoked. Since \cite[Theorem 2.2]{Brown87b} holds for arbitrary rings $R$ (see Brown's Remark (1) right after its statement in \cite{Brown87b}), one checks readily that so does \cite[Theorem 8]{Alonso94} with the same proof. This has also been observed in \cite[Theorem 5.5]{CastellanoCorobCook20} in the more general context of compactly generated tdlc groups.
\end{proof}

Another interesting consequence of quasi-retracts is that they provide a lower bound for Dehn functions. Given a finitely presented group $G$ with finite presentation $\langle  X \mid R \rangle$, let $w$ be a word in the free group $F(X)$ generated by elements in $X$. If $w=1$ in $G$, we can write $w$ as finite product of conjugates of elements of relations from $R$ and their inverses, i.e.
\[ w = \prod_{i=1}^n w_ir_iw_i^{-1}\]
for some $r_i\in R^{\pm 1}$, $w_i\in F(X)$. Define $\Area(w)$ to be the minimal such $n$. The function \
\[ \delta_G(n) = \max \{ \Area(w) \mid |w|\leq n, w=1 ~\text{in}~ G\}\]
is called the \emph{Dehn function} of $G$.

The Dehn functions associated to different finite presentations of the same group are $\cong$ equivalent in the following sense: given functions $f_1,f_2:\mathbb{N} \to [0,\infty)$, one writes $f_1\preceq f_2$ if there is a constant $K>0$ such that $f_1(n) \leq Kf_2(Kn)+Kn+K$, and one writes $f_1\cong f_2$ if $f_1\preceq f_2$ and $f_2 \preceq f_1$.

For a null-homotopic word $w$, one can interpret $\Area(w)$ in terms of the (cellular) filling area for the loop  described by $w$, based at the identity, in the presentation complex corresponding to $\langle X \mid R\rangle$. If $G$ is of type $\F_{r+1}$, one can generalise this geometric viewpoint and define higher Dehn functions $\delta^{(k)}_G(n)$ for $1\leq k \leq r$ in terms of filling volumes of cellular $k$-spheres in universal covers of classifying spaces with finite $(r+1)$-skeleton. We refer to \cite{AlonsoWangPride99} for a precise definition, since we will not require it here. Instead we will only use known facts about Dehn functions, including the following behaviour under quasi-retracts.

\begin{theorem}[{\cite{Alonso90},\cite[Corollary 5]{AlonsoWangPride99}}]\label{thm:quasi-retract-dehnfunc}
Let $G$ and $Q$ be groups of type $\F_{n+1}$ such that $Q$ is a quasi-retract of $G$ with respect to word metrics corresponding to some finite generating sets. Then $\delta^{(k)}_Q \preceq \delta^{(k)}_G$ for $1\leq k \leq n$.
\end{theorem}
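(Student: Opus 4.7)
The plan is to transfer fillings from $G$ to $Q$ using the quasi-retract data. Fix finite generating sets of $Q$ and $G$, and coarse Lipschitz maps $\iota\colon Q\to G$ and $\rho\colon G\to Q$ with $d(\rho\circ\iota(q),q)\leq E$ for all $q\in Q$. Choose classifying spaces $X_Q$ and $X_G$ with finite $(n+1)$-skeleton (available since both groups are of type $\F_{n+1}$), and work in their universal covers $\widetilde X_Q,\widetilde X_G$, where the Dehn functions $\delta^{(k)}$ are computed via minimal volumes of cellular $(k+1)$-fillings of cellular $k$-spheres.

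First I would treat the ordinary Dehn function $(k=1)$ in order to fix the pattern. Given a null-homotopic word $w$ in $Q$ with $|w|\leq m$, I would trace the sequence of its prefix vertices in $\widetilde X_Q$, apply $\iota$ to each, and connect consecutive images by a geodesic path in $\widetilde X_G$; since $\iota$ is coarse Lipschitz, this produces a word $\iota_\ast(w)$ in $G$ of length at most $Cm+D$ that is still null-homotopic in $G$ (after closing up a uniformly bounded gap using a fixed element representing the basepoint correction). It therefore bounds a cellular disk in $\widetilde X_G$ of area at most $\delta^{(1)}_G(Cm+D)$. I would then push this disk back via $\rho$: each $2$-cell of $\widetilde X_G$ has boundary of bounded combinatorial length, so its image under $\rho$ is a loop of bounded length in $\widetilde X_Q$, which can be filled by a uniformly bounded number of $2$-cells by finiteness of the $2$-skeleton of $X_Q$. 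Concatenating these local fillings, and correcting the discrepancy $\rho\circ\iota_\ast(w)$ versus $w$ by a uniformly bounded-area annulus (again using $d(\rho\iota(q),q)\leq E$ and the finite $2$-skeleton), I obtain a cellular filling of $w$ of area at most $K\delta^{(1)}_G(Km)+Km+K$, giving $\delta^{(1)}_Q\preccurlyeq\delta^{(1)}_G$.

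For $1\leq k\leq n$ the argument proceeds by the same three-step scheme: lift a cellular $k$-sphere $S$ in $\widetilde X_Q$ to a cellular $k$-sphere $\iota_\ast(S)$ in $\widetilde X_G$ of comparable $k$-volume using a coarse lift that is then straightened cellularly skeleton by skeleton; fill $\iota_\ast(S)$ in $\widetilde X_G$ by a cellular $(k+1)$-chain of volume $\delta^{(k)}_G$; then push the filling back to $\widetilde X_Q$ via $\rho$ and straighten it cell-by-cell, using that the image of the boundary of each $(k+1)$-cell of $\widetilde X_G$ under $\rho$ has bounded $k$-volume in $\widetilde X_Q$ and hence admits a cellular $(k+1)$-filling of uniformly bounded volume by finiteness of the $(n+1)$-skeleton of $X_Q$. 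The main obstacle is precisely this last cellular straightening: $\iota$ and $\rho$ are only coarsely Lipschitz, not cellular or equivariant, so the pushed filling is not a priori a cellular chain. The entire content of the proof lies in showing that the cost of cellularizing the image of a single cell is bounded by a universal constant, so that the total bookkeeping multiplies volumes only by a constant and scales arguments by the Lipschitz constants of $\iota$ and $\rho$, yielding the desired inequality $\delta^{(k)}_Q\preccurlyeq\delta^{(k)}_G$.
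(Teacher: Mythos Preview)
The paper does not supply its own proof of this theorem: it is quoted verbatim from Alonso~\cite{Alonso90} and Alonso--Wang--Pride~\cite[Corollary~5]{AlonsoWangPride99} and used as a black box. Your sketch is precisely the strategy of those references --- transfer a $k$-sphere from $\widetilde X_Q$ to $\widetilde X_G$ via $\iota$, fill it there, push the filling back via $\rho$, and cellularize at bounded cost per cell --- so there is nothing to compare against within this paper, and your approach aligns with the cited sources.

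One comment on content: you correctly flag that the cellularization of the pushed-back filling is where the entire difficulty lies, and your justification (``finiteness of the $(n+1)$-skeleton of $X_Q$'') is the right one, but it is worth being explicit that the uniform bound on the volume of fillings of pushed-back cell boundaries uses two facts simultaneously: contractibility of $\widetilde X_Q$ ensures such fillings exist at all, while cocompactness of the $G$-action on $\widetilde X_G$ (finitely many cell types) together with the fixed coarse Lipschitz constants of $\rho$ ensures there are only finitely many combinatorial types of $k$-sphere to fill, whence a uniform volume bound. This is exactly what Alonso--Wang--Pride make precise.
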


As noted in the introduction, a self-similar group $G$ does not need to be a quasi-retract of $V_d(G)$.
In~\cite{SkipperWitzelZaremsky19,Zaremsky25}, the authors deal with this fact by introducing a set of conditions that guarantees that $G$ is a quasi-retract of $V_d(G)$ and then find groups that satisfy these conditions.
In this section we follow a different approach.
Instead of realizing a given group $G$ as a quasi-retract of $V_d(G)$, we produce a self-similar split extension $\Gamma$ of $G$ for which the natural retraction $\Gamma \rightarrow G$ can be precomposed with some map $V_d(\Gamma) \rightarrow \Gamma$ such that the resulting map $V_d(\Gamma) \rightarrow G$ is a quasi-retraction.
We start by introducing the following notion.

\begin{definition}\label{def:persistent-retracts}
Let $G \leq \Aut(\TT_d)$ be a self-similar group and let $r \colon G \rightarrow Q$ be a retraction.
We say that $r$ is \emph{persistent} if for every $g$ and every $u\in V(\TT_d)$ we have $r(g) = r(g^u)$, i.e.\ the image of a state of $g$ does not depend on the vertex at which the state is defined.
\end{definition}

We proceed by showing how persistent retractions can be used to construct quasi-retractions.
To this end, recall that we can view $V_d$ naturally as the subgroup of $V_d(G)$ with trivial labeling, i.e.\ $V_d = V_d(\{1\})$.
Recall also that $C$ denotes the unique tree with one root and $d$ leaves, and we have an embedding $\iota_1 \colon G \to V_d(G)$ by:
\[
h \mapsto (C,\mathrm{id}(h,1,\cdots, 1),C).
\]

\begin{lemma}\label{lem:pers-retr-implies-quasi-retr}
Let $G \leq \Aut(\TT_d)$ be a finitely generated self-similar group and let $r \colon G \rightarrow Q$ be a persistent retraction.
Then the R\"over--Nekrashevych group $V_d(G)$ admits $Q$ as a quasi-retract.
\end{lemma}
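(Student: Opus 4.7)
The plan is to construct a section $\iota \colon Q \to V_d(G)$ and a coarse Lipschitz map $\rho \colon V_d(G) \to Q$ with $\rho\circ\iota = \id_Q$ explicitly, using the persistence hypothesis. Since $Q$ is a retract of the finitely generated group $G$, it is itself finitely generated, so the restriction $\iota \defeq \iota_1|_Q$ is a homomorphism between finitely generated groups and is thus Lipschitz with respect to any word metrics. For $\rho$, we define it by reading off the image under $r$ of the label sitting at a distinguished leaf. Concretely, for a representative $[T_-,\sigma(f_1,\ldots,f_n),T_+]$ of $v\in V_d(G)$, let $j_0\in\{1,\ldots,n\}$ denote the index of the unique leaf of $T_-$ of the form $1^k$ (the ``leftmost'' leaf along the branch $1^\omega$), and set
\[
\rho(v)\defeq r\bigl(f_{\sigma^{-1}(j_0)}\bigr)\in Q.
\]

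The first task will be to verify that $\rho$ is well defined on equivalence classes of triples. Since equivalences are generated by simple expansions, it suffices to check that a simple expansion at some source position $i$ leaves $\rho(v)$ unchanged. If $\sigma(i)\neq j_0$, the leftmost leaf of $T_-$ and its source label are untouched. If $\sigma(i)=j_0$, the leftmost leaf becomes $1^{k+1}$ and, chasing the new permutation $\sigma'$, the new source leaf carries the label $f_{\sigma^{-1}(j_0)}^{x^*}$ for some $x^*\in X_d$; persistence then gives $r(f_{\sigma^{-1}(j_0)}^{x^*})=r(f_{\sigma^{-1}(j_0)})$. The identity $\rho\circ\iota=\id_Q$ is immediate from the formula: the leftmost leaf of the copy of $C$ is its first leaf, with source label $q$, so $\rho(\iota(q))=r(q)=q$.

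The remaining and most technical step will be to show that $\rho$ is coarse Lipschitz. Picking a finite generating set $\iota_1(S_G)\cup S_V$ of $V_d(G)$, where $S_G$ generates $G$ and $S_V$ generates $V_d$, it suffices to show that right-multiplication by any such generator $s$ moves $\rho(v)$ by a uniformly bounded amount in $Q$. After refining $v$ and $s$ so that the source of $v$ agrees with the target of $s$, the wreath-style composition formula gives, at the source leaf corresponding to the leftmost target leaf, a new label of the form $f^v_{i_0}\cdot f^s_{k_0}$. For $s\in S_V$ the labels are trivial so $\rho(vs)=\rho(v)$; for $s=\iota_1(g_s)\in\iota_1(S_G)$ the relevant $f^s_{k_0}$ is either $1$ or a state of $g_s$, and persistence delivers $\rho(vs)\in\{\rho(v),\rho(v)\cdot r(g_s)\}$. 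In either case $d_Q(\rho(v),\rho(vs))\leq \max_{g\in S_G}|r(g)|_Q$, so $\rho$ is Lipschitz with this constant.

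The main obstacle is the bookkeeping in the third step: tracking how permutations combine and how refinement propagates labels through the groupoid product. The persistence condition is exactly the algebraic input that makes this bookkeeping harmless, since any label that appears after refinement differs from the original only by passing to a state, and $r$ does not distinguish such labels. Taken together, these three verifications will exhibit the pair $(\iota,\rho)$ as the desired quasi-retraction.
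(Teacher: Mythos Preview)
Your proposal is correct and follows essentially the same approach as the paper: define $\rho$ by applying $r$ to a distinguished label, verify well-definedness using persistence under simple expansion, and check the Lipschitz condition on the generators $\iota_1(S_G)\cup S_V$ using persistence again. The only (harmless) difference is a dual choice of distinguished position: the paper sets $\rho([T_-,\sigma(f_1,\ldots,f_n),T_+])=r(f_1)$ (the label at the first leaf of $T_+$) and checks \emph{left} multiplication by generators, whereas you take $r(f_{\sigma^{-1}(1)})$ (the label at the $T_+$-leaf mapping to the leftmost leaf of $T_-$, noting $j_0=1$ always) and check \emph{right} multiplication; each choice makes the corresponding multiplication transparent because the tree on the tracked side does not change.
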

\begin{proof}

We first define a map $\rho: V_d(G) \to Q$ and then prove that it is a quasi-retract. Given any element $x = [T_-, \sigma(g_1,\cdots,g_n), T_+]$, let $\rho(x) := r(g_1)$. To prove that $\rho$ is well-defined, it suffices to show that $\rho(x)$ is invariant under elementary expansion of $[T_-, \sigma(g_1,\cdots,g_n), T_+]$. If the elementary expansion is performed at the first leaf, this is the case since $r$ is persistent. If the elementary expansion is not performed at the first leaf, $g_1$ does not change, so $\rho$ is also well-defined in this case.

We now proceed to show that $\rho$ is a quasi-retract. Since $Q$ is a retraction of the finitely generated group $G$, we can write $G = \ker(r) \rtimes Q$ and $Q$ is finitely generated by some finite generating set $S_Q$. We can choose a finite generating set $S_G$ of $G$ such that $S_G\setminus S_Q \subseteq \ker(r) $. Let $S_{V_d}$ be a finite generating set of $V_d$.  By Proposition \ref{prop:prpty-Vd(G)} (1), $V_d(G)$ is generated by $S_{V_d}$ and $\iota_1(S_G)$. It suffices to show that the map $\rho: V_d(G) \to Q$ is coarse Lipschitz with respect to the corresponding word metrics on $V_d(G)$ and $Q$, where as in \cite{SkipperWitzelZaremsky19} we work with left Cayley graphs. To see this, observe that persistence of $r$ and a straight-forward computation analogous to the one in the proof of \cite[Proposition 5.5]{SkipperWitzelZaremsky19} yield:
 \begin{enumerate}
    \item  $\rho( \iota_1((h,s)) x)\in \{\rho(x),  s\rho(x)\}$ for all $s\in S_Q, h\in \ker(r)$ and $x\in V_d(G)$,  and
    \item  $\rho(yx) = \rho(x)$ for any $y\in V_d$ and $x\in V_d(G)$.
\end{enumerate}
It follows that $\rho$ is non-expanding and hence coarse Lipschitz. 
\end{proof}

\section{Creating finite abelianization}\label{sec:finite-abel}
In this section, we will apply a construction by Zaremsky \cite{Zaremsky25embedding} to a finitely generated self-similar group $G\leq \Aut(\TT_d)$ to produce an embedding $j: G\to \Aut(\TT_{md})$ so that $V_{md}(j(G))$ has finite abelianization. We will then show that this construction preserves persistent retractions. 

We start by explaining Zaremsky's construction. For $g\in G$, let $g=\rho(g)(g_1,\ldots,g_d)$ be its wreath recursion. Given $m\in \N$, define a morphism 
\[
f^{(m)}: G \to \Aut(\TT_{md}),~ g\mapsto f_g^{(m)} 
\]
via the wreath recursion
\[
f_g^{(m)}=\rho(g)^{\oplus m}(f_{g_1}^{(m)},\ldots, f_{g_{d}}^{(m)},f_{g_1}^{(m)}\ldots,f_{g_{d}}^{(m)},\ldots, f_{g_1}^{(m)},\ldots,f_{g_{d}}^{(m)}),
\]
where $\rho(g)^{\oplus m}\in S_{md}$ is given by
\[
\rho(g)^{\oplus m}(kd+i)=kd+\rho(g)(i) \text{ for } 0\leq k \leq d-1 \text{ and } 1\leq i \leq d.
\]
The morphism $f^{(m)}$ is clearly well-defined and has self-similar image. It is injective, since the induced $G$-action restricts to the original $G$-action on the subtree $\TT_d\subset \TT_{md}$ defined by the inclusion $\left\{1,\ldots,d\right\}\hookrightarrow \left\{1,\ldots, md\right\}$.

\begin{lemma}\label{lem:finite-abelianization}
    Let $G\leq \Aut(\TT_d)$ be a finitely generated self-similar group. Then there is some $m\in \N$ such that $V_{md}(f^{(m)}(G))_{ab}$ is finite.
\end{lemma}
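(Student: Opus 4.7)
The plan is to apply \Cref{prop:abelianization-Vd(G)} to $H = f^{(m)}(G) \leq \Aut(\TT_{md})$ and translate the resulting relations back to $G_{ab}$ via the isomorphism $f^{(m)}(G)_{ab} \cong G_{ab}$. By the definition of $f^{(m)}$, the wreath recursion of $f_g^{(m)}$ has its $md$ level-$1$ states given by $m$ copies of each of $f_{g_1}^{(m)}, \ldots, f_{g_d}^{(m)}$. Hence \Cref{prop:abelianization-Vd(G)} presents $V_{md}(f^{(m)}(G))_{ab}$ as a quotient of $G_{ab}$ (with an extra $\Z/2\Z$ summand when $md$ is odd) by the relations
\[
\pi(g) \;=\; (\mathrm{sign}\ \mathrm{term}) \;+\; m\sum_{i=1}^d \pi(g_i)
\]
as $g$ ranges over $G$ (or by the remark following \Cref{prop:abelianization-Vd(G)}, over a finite generating set).

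The next step is to package the right-hand side as an endomorphism. Define
\[
\tilde\Sigma \colon G \to G_{ab},\qquad g \mapsto \sum_{i=1}^d \pi(g_i),
\]
where $g = \rho(g)(g_1,\ldots,g_d)$ is the wreath recursion. I would verify that $\tilde\Sigma$ is a homomorphism by the direct computation $(gh)_i = g_{\rho(h)(i)} h_i$, which yields
\[
\tilde\Sigma(gh) \;=\; \sum_i \pi(g_{\rho(h)(i)}) + \sum_i \pi(h_i) \;=\; \tilde\Sigma(g) + \tilde\Sigma(h)
\]
since $\rho(h)$ permutes the index set. Thus $\tilde\Sigma$ factors through an endomorphism $\Sigma \colon G_{ab} \to G_{ab}$, and the relations above may be rewritten as $(I - m\Sigma)(\pi(g)) = (\text{sign term})$.

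Choosing $m$ to be even eliminates the sign issue, and leaves us with $V_{md}(f^{(m)}(G))_{ab} \cong \mathrm{coker}(I - m\Sigma)$. Since $G$ is finitely generated, $G_{ab}$ is a finitely generated abelian group, so $V \defeq G_{ab} \otimes \Q$ is a finite-dimensional $\Q$-vector space on which $\Sigma$ induces an endomorphism $\Sigma_\Q$ with characteristic polynomial in $\Q[x]$. In particular, $\Sigma_\Q$ admits only finitely many rational eigenvalues. I would then choose $m \in \N$ even and large enough that $1/m$ avoids this finite set; both conditions exclude only finitely many positive integers. For such an $m$, the endomorphism $I - m\Sigma_\Q$ has nonzero determinant, hence is invertible on $V$, so the cokernel of $I - m\Sigma$ on $G_{ab}$ is a finitely generated abelian group of rank $0$, i.e.\ finite. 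This gives the desired conclusion.

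The main obstacle is the verification that the ``state sum'' $\tilde\Sigma$ is well-defined as a homomorphism on $G$; everything after that is linear algebra over $\Q$. One should also take care that by working with $f^{(m)}(G)$ rather than $G$, the relevant wreath-recursion data in \Cref{prop:abelianization-Vd(G)} really is $m$ copies of each state of $g$, which is exactly what converts the ambient map $I - \Sigma$ into the parametrized family $I - m\Sigma$ whose spectrum we can tune.
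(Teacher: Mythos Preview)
Your proof is correct and follows essentially the same approach as the paper: both apply \Cref{prop:abelianization-Vd(G)} to $f^{(m)}(G)$, translate the relations to $G_{ab}$ via the $m$-fold repetition of states, and then choose an even $m$ avoiding the finitely many eigenvalues of the resulting ``state sum'' operator so that $I - m\Sigma$ becomes invertible over $\Q$. The only cosmetic difference is that you work intrinsically with the endomorphism $\Sigma$ of $G_{ab}$ (and verify it is well-defined), whereas the paper lifts to a free abelian group on a finite generating set and argues with the matrix $I_r - mA$ there.
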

\begin{proof}
    This follows from Zaremsky's proof of \cite[Theorem 1.1]{Zaremsky25embedding}. We include his argument here for the readers convenience. Denote by $\pi: G \to G_{ab}$ the abelianization map. By \Cref{prop:abelianization-Vd(G)}, for $m$ even, we have
    \[
        V_{md}(f^{(m)}(G))_{ab} \cong G_{ab}/\left\langle \pi(g) - m \cdot (\pi(g_1)+\ldots + \pi(g_d))\mid g\in G\right\rangle.
    \]
    The right side is a quotient of the abelian group
    \begin{align*}
        & = \oplus_{i=1}^r \Z \pi(a_i)/\left\langle \pi(a_i) - m \cdot (\pi(a_{i,1}) + \ldots + \pi(a_{i,d}))\mid 1\leq i \leq r \right\rangle\\
        &= \Z^r/ {\rm Im}(I_r-m\cdot A),
    \end{align*}
    where $A$ is the $(r\times r)$-matrix whose $i$-th column is $\pi(a_{i,1}) + \ldots + \pi(a_{i,d})$ expressed with respect to the basis $\left\{\pi(a_1),\ldots,\pi(a_r)\right\}$ of $\oplus_{i=1}^r \Z \pi(a_i)$. Since $A$ only has finitely many eigenvalues, there is some $m\in \N$ such that ${\rm Im}(I_r-mA)\leq \Z^r$ has finite index and thus so that $V_{md}(f^{(m)}(G))_{ab}$ is finite.
\end{proof}

\begin{lemma}\label{lem:pers-retr-inv-under-doubling}
Let $G \leq \Aut(\TT_d)$ be a self-similar group and let $r \colon G \rightarrow Q$ be a persistent retraction.
Then $r^{(m)} \colon f^{(m)}(G) \rightarrow Q,\ f^{(m)}_g \mapsto r(g)$ is a persistent retraction of $f^{(m)}(G) \leq \Aut(\TT_{md})$.
\end{lemma}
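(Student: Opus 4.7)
My plan is to verify three things in sequence: that $r^{(m)}$ is a well-defined group homomorphism, that it is a retraction onto $Q$, and that it is persistent in the sense of Definition~\ref{def:persistent-retracts}. The first two are essentially formal consequences of the injectivity of $f^{(m)}$ (established in the paragraph preceding Lemma~\ref{lem:finite-abelianization}) together with the retraction property of $r$, so the heart of the lemma is the persistence claim, which will follow from a short induction on the level using the explicit wreath recursion.

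For the first point, since $f^{(m)} \colon G \to \Aut(\TT_{md})$ is injective, the assignment $f_g^{(m)} \mapsto r(g)$ is well-defined on $f^{(m)}(G)$, and it agrees with the composition $r \circ (f^{(m)})^{-1}$, hence is a group homomorphism. For the second point, I pick a section $\iota \colon Q \hookrightarrow G$ of $r$ and observe that $f^{(m)} \circ \iota \colon Q \to f^{(m)}(G)$ satisfies $r^{(m)} \circ (f^{(m)} \circ \iota)(q) = r(\iota(q)) = q$, so $r^{(m)}$ is a retraction.

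For persistence, I analyze the states of $f_g^{(m)}$ at arbitrary vertices of $\TT_{md}$. By the wreath recursion
\[
f_g^{(m)} = \rho(g)^{\oplus m}\bigl(f_{g_1}^{(m)}, \ldots, f_{g_d}^{(m)}, f_{g_1}^{(m)}, \ldots, f_{g_d}^{(m)}, \ldots, f_{g_1}^{(m)}, \ldots, f_{g_d}^{(m)}\bigr),
\]
the state of $f_g^{(m)}$ at a level-$1$ vertex $i \in \{1, \ldots, md\}$ is exactly $f_{g_j}^{(m)}$ with $j = ((i-1) \bmod d) + 1$, and in particular it lies in $f^{(m)}(G)$ and is the image under $f^{(m)}$ of the level-$1$ state $g_j$ of $g$. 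An easy induction on the level of $u \in V(\TT_{md})$ then shows that $(f_g^{(m)})^u = f_h^{(m)}$ for some state $h = g^{v}$ of $g$ taken at an appropriate vertex $v \in V(\TT_d)$. Applying persistence of $r$ to the pair $g, h$ yields
\[
r^{(m)}\bigl((f_g^{(m)})^u\bigr) = r(h) = r(g) = r^{(m)}(f_g^{(m)}),
\]
which is exactly persistence of $r^{(m)}$.

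The argument is essentially bookkeeping, and I do not foresee a substantive obstacle. The only conceptual point worth checking carefully is that Zaremsky's $m$-fold repetition in the wreath recursion never produces a state outside the image of $f^{(m)}$, so that the definition of $r^{(m)}$ applies at every vertex and the persistence of $r$ transfers verbatim.
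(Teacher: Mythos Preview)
Your proposal is correct and follows essentially the same approach as the paper: both arguments use injectivity of $f^{(m)}$ to see that $r^{(m)} = r \circ (f^{(m)})^{-1}$ is a well-defined retraction, and then invoke the explicit wreath recursion of $f_g^{(m)}$ to identify every level-$1$ state as some $f_{g_i}^{(m)}$, whence persistence of $r$ gives $r^{(m)}((f_g^{(m)})_j) = r(g_i) = r(g) = r^{(m)}(f_g^{(m)})$. The only difference is cosmetic: the paper stops after the level-$1$ computation (leaving the induction to arbitrary vertices implicit via $(f_g^{(m)})^{uv} = ((f_g^{(m)})^u)^v$), whereas you spell out that induction explicitly.
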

\begin{proof}
The map $r^{(m)}:f^{(m)}(G)\to Q$ is a well-defined retraction, since it is the composition of the isomorphism $ f^{(m)}(G)\to G$, $f_g^{(m)}\mapsto g$, with the retraction $r:G\to Q$. We claim that $r^{(m)}$ is also persistent. Indeed, for $g\in G$, the wreath recursions $g=\rho(g)(g_1,\ldots, g_d)$,
\[
    f_g^{(m)}=\rho(g)^{\oplus m}(f_{g_1}^{(m)},\ldots, f_{g_{d}}^{(m)},f_{g_1}^{(m)}\ldots,f_{g_{d}}^{(m)},\ldots, f_{g_1}^{(m)},\ldots,f_{g_{d}}^{(m)}),
\]
and persistence of $r$, imply that for every $1\leq j\leq md$ there is some $1\leq i \leq d$ such that
\[
r^{(m)}( (f_g^{(m)})_{j})
=r^{(m)}(f^{(m)}_{g_i})
=r(g_i)
=r(g)
=r^{(m)}(f_g^{(m)}).
\]
\end{proof}

\begin{corollary}\label{cor:from-pers-retr-to-virt-simpl-and-qr}
Let $G \leq \Aut(\TT_d)$ be a finitely generated self-similar group and let $r \colon G \rightarrow Q$ be a persistent retraction.
Then there is some $m \in \N$ and some embedding $j \colon G \rightarrow \Aut(\TT_{md})$ such that $j(G)\leq \Aut(\TT_{md})$ is a self-similar subgroup such that the corresponding R\"over--Nekrashevych group $V_{md}(G)$ has finite abelianization and admits $Q$ as a quasi-retract.
\end{corollary}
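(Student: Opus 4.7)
The statement is essentially a direct combination of the three preceding lemmas, so the plan is simply to assemble them in the right order.

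First, I would apply \Cref{lem:finite-abelianization} to the finitely generated self-similar group $G \leq \Aut(\TT_d)$. This produces some $m \in \N$ such that, setting $j \defeq f^{(m)} \colon G \to \Aut(\TT_{md})$, the image $j(G)$ is a self-similar subgroup of $\Aut(\TT_{md})$ (by construction of $f^{(m)}$ as explained just before \Cref{lem:finite-abelianization}), and the abelianization $V_{md}(j(G))_{\ab}$ is finite. The injectivity of $j$ was already noted in the paragraph defining $f^{(m)}$, using that the induced action restricts to the original one on the subtree $\TT_d \subset \TT_{md}$.

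Next I would feed the persistent retraction $r \colon G \to Q$ into \Cref{lem:pers-retr-inv-under-doubling} with the same value of $m$. This gives a persistent retraction $r^{(m)} \colon j(G) \to Q$. Since $G$ is finitely generated and $j$ is an isomorphism onto $j(G)$, the group $j(G)$ is also finitely generated.

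Finally, since $j(G) \leq \Aut(\TT_{md})$ is a finitely generated self-similar group equipped with the persistent retraction $r^{(m)}$, I would apply \Cref{lem:pers-retr-implies-quasi-retr} to conclude that $V_{md}(j(G))$ admits $Q$ as a quasi-retract. Combining this with the finiteness of $V_{md}(j(G))_{\ab}$ from the first step yields the corollary.

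Since every step is a direct invocation of a previously proved lemma, there is no real obstacle; the only thing worth double-checking is the compatibility of the choice of $m$ across the two lemmas, but \Cref{lem:pers-retr-inv-under-doubling} holds for \emph{every} $m$, so one can safely use the $m$ provided by \Cref{lem:finite-abelianization}.
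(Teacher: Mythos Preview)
Your proposal is correct and matches the paper's own proof, which simply cites \Cref{lem:finite-abelianization}, \Cref{lem:pers-retr-inv-under-doubling}, and \Cref{lem:pers-retr-implies-quasi-retr} as a direct consequence. Your observation that \Cref{lem:pers-retr-inv-under-doubling} works for every $m$, so the $m$ from \Cref{lem:finite-abelianization} can be used, is exactly the compatibility check needed.
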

\begin{proof}
    This is a direct consequence of \Cref{lem:finite-abelianization,lem:pers-retr-inv-under-doubling,lem:pers-retr-implies-quasi-retr}.
\end{proof}

\section{Realizing linear groups as persistent retracts}\label{sec:realizing-persistent-retracts}

The goal of this section is to provide for a given finitely generated subgroup $Q \leq \GL_n(\Q)$, a self-similar group $G$ that admits a persistent retraction $r \colon G \rightarrow Q$ such that the finiteness properties of $G$ and $Q$ coincide.
The self-similarity of $G$ will be inherited from the affine group $\mathrm{Aff}_n(\Z[1/N]) \defeq \Z[1/N]^n \rtimes \mathrm{GL}_n(\Z[1/N])$, which is known to admit a natural family of self-similar actions, see e.g.~\cite{KionkeSchesler23amenable,SunicVentura2012}.
Let us start by recalling the latter.

\subsection{Self-similar actions of $\mathrm{Aff}_n(\Z[1/N])$}

Let $p$ be a prime that does not divide $N$.
We consider the set $X_{p,n} \defeq \{0,\ldots,p-1\}^n$, which will play the role of our alphabet $X_d$ from the previous sections.
Let $\TT_{p,n}$ denote the rooted tree corresponding to $X_{p,n}$.
Recall that the boundary $\partial \TT_{p,n}$ can be identified with the set $X_{p,n}^{\omega}$ of infinite sequences over $X_{p,n}$, which in turn can be identified with $\Z_p^n$ via $(x_n) \mapsto \sum \limits_{n=0}^{\infty} p^i x_i$.
In view of this, the natural action of
\[
\Aff_n(\Z_p) = \Z_p^n \rtimes \mathrm{GL}_n(\Z_p)
\]
on $\Z_p^n$ induces an action on $\TT_{p,n}$ and hence on $X_{p,n}^{\omega}$.
Since this action is moreover faithful, we will from now on identify $\Aff_n(\Z_p)$ with its image in $\Aut(\TT_{p,n})$.
With this point of view, it was shown in~\cite[Lemma 6.3]{KionkeSchesler23amenable} that the subgroup
\[
\Aff_n(\Z) = \Z^n \rtimes \mathrm{GL}_n(\Z)
\]
of $\Aff_n(\Z_p)$ is self-similar.
More generally, essentially the same proof as in~\cite{KionkeSchesler23amenable} can be used to deduce the self-similarity of $\Aff_n(R)$ for more general rings $R$, such as $R = \Z[1/N]$.
In what follows it will be convenient to make this more precise.

\begin{definition}\label{def:p-rigid}
We call a subring $R$ of $\Z_p$ \emph{rigid} if for every $x \in \Z_p$ we have $x \in R$ if and only if $px \in R$.
In other words, $R$ is a rigid subring of $\Z_p$ if $(p)R = (p)\Z_p \cap R$.
\end{definition}

\begin{lemma}\label{lem:rigid-self-similar}
Let $R$ be a rigid subring of $\Z_p$ and let $g \in \Aff_n(R)$ be an element that is given by $g(v) = Av+b$ for some $A \in \GL_n(R)$, $b \in R^n$ and every $v \in \TT_{p,n}$.
Then for every $x \in X_{p,n}$ there is some $b' \in R^n$ such that the state $g^x$ is given by $g^x(v) = Av+b'$ for every $v \in \TT_{p,n}$.
\end{lemma}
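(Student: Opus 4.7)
The plan is to compute $g^x$ explicitly using the identification of $\partial \TT_{p,n}$ with $\Z_p^n$ via $(x_i)_{i\geq 0} \mapsto \sum_{i\geq 0} p^i x_i$. First, fix $x \in X_{p,n}$ and view it as an element of $\Z^n \subseteq R^n$. Under the above identification, every boundary point with initial letter $x$ has the unique form $x + p\tilde v$ for some $\tilde v \in \Z_p^n$, and the action of $g$ on this point is simply
\[
g(x + p\tilde v) = Ax + b + pA\tilde v.
\]

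Next, I would read off the state by performing a "digit plus carry" decomposition. Write $Ax + b = y + pc$, where $y \in X_{p,n}$ is the componentwise reduction modulo $p$ and $c \in \Z_p^n$ is the carry. Substituting this back gives
\[
g(x + p\tilde v) = y + p(A\tilde v + c),
\]
which on the one hand forces $g(x) = y$ and on the other hand, by the defining property of states, identifies $g^x$ with the affine transformation $w \mapsto Aw + c$ of $\Z_p^n$. In particular, $g^x$ has the desired affine form with the same linear part $A \in \GL_n(R)$ and translation part $b' := c$.

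The last step is to show $b' = c \in R^n$. Since $A \in \GL_n(R)$, $b \in R^n$, and $x \in \Z^n \subseteq R^n$, we have $Ax + b \in R^n$; also $y \in \Z^n \subseteq R^n$. Hence $pc = (Ax + b) - y \in R^n$. Applying the rigidity hypothesis $(p)R = (p)\Z_p \cap R$ to each coordinate then yields $c \in R^n$, completing the proof.

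The argument is essentially bookkeeping with base-$p$ expansions; the only conceptual ingredient is the rigidity of $R$, which is precisely engineered to rule out denominators in the carry $c$. No substantial obstacle is expected, and in fact this is the natural generalisation of the known argument for $R = \Z$ from \cite{KionkeSchesler23amenable} with the rigidity condition isolating exactly the property of $\Z \subseteq \Z_p$ that makes that proof work.
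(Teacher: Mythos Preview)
Your proof is correct and follows essentially the same approach as the paper's: both compute $g(x+pw)=Ax+b+pAw$, write $Ax+b$ as a first digit plus $p$ times a carry, read off the state as $w\mapsto Aw+\text{(carry)}$, and then invoke rigidity to conclude the carry lies in $R^n$. The only differences are notational.
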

\begin{proof}
Let $u \in \Z_p^n$ be an element of the form $u = x + pw$ with $x \in X_{p,n}$ and $w \in \Z_p^n$.
Let further $x' \in X_{p,n}$ and $b' \in \Z_p^n$ be such that $Ax+b = x' + pb'$.
Then we have
\[
g(u)
= A(x + pw) + b
= Ax+b + pAw
= x' + p(Aw+b'),
\]
which tells us that $g^x$ is given by $g^x(w) = Aw+b'$.
Since $\Z\subset R$, we have $X_{p,n}\subset R^n$, where we identify $x\in X_{p,n}$ with the sequence $(x,0,0,\ldots)\in \Z_p^n$.
Using $Ax+b = x' + pb'$ and our assumption that $R$ is rigid we thus deduce that
\[
pb' = Ax+b - x' \in (p) \Z_p^n \cap R^n = (p)R^n. 
\] 
Hence $b' \in R^n$, which completes the proof. 
\end{proof}

\begin{lemma}\label{lem:rigid-self-similar-and-retract}
Let $R$ be a rigid subring of $\Z_p$ and let $Q \leq \GL_n(R)$ be a subgroup.
Then $R^n \rtimes Q \leq \Aff_n(R)$ is a self-similar subgroup of $\Aut(\TT_{p,n})$ for which the canonical map $R^n \rtimes Q \rightarrow Q$ is a persistent retraction.
\end{lemma}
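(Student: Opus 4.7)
The plan is to deduce both claims directly from \Cref{lem:rigid-self-similar}, using that the statement of that lemma preserves the linear part $A$ while only altering the translation part.

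First I would verify self-similarity. Given $g \in R^n \rtimes Q$, write $g$ in the form $g(v)=Av+b$ with $A\in Q\leq \GL_n(R)$ and $b\in R^n$. By \Cref{lem:rigid-self-similar}, for every $x\in X_{p,n}$ the level-$1$ state $g^x$ has the form $g^x(w)=Aw+b'$ for some $b'\in R^n$, so the linear part of $g^x$ is again the same element $A\in Q$. Hence $g^x \in R^n \rtimes Q$. A straightforward induction on the length of $u\in V(\TT_{p,n})$, using the relation $g^{ux}=(g^u)^x$ and the same lemma applied to $g^u$, shows that every state $g^u$ also has linear part $A$ and translation part in $R^n$, so $g^u \in R^n\rtimes Q$. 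This establishes that $R^n\rtimes Q$ is a self-similar subgroup of $\Aut(\TT_{p,n})$.

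Next I would address the retraction. The canonical map $r\colon R^n\rtimes Q \to Q$, $(b,A)\mapsto A$, is a group homomorphism splitting the inclusion $Q\hookrightarrow R^n\rtimes Q$, hence a retraction. For persistence, the computation above already shows that for every $g=(b,A)\in R^n\rtimes Q$ and every $u\in V(\TT_{p,n})$, the state $g^u$ has linear part $A$. Consequently $r(g^u)=A=r(g)$, which is exactly the persistence condition of \Cref{def:persistent-retracts}.

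There is no genuine obstacle here: the content of the lemma is entirely packaged in \Cref{lem:rigid-self-similar}, whose key input (rigidity of $R$) is used to conclude that the translation part $b'$ of a level-$1$ state remains in $R^n$. The only subtlety worth spelling out explicitly in the writeup is the inductive step from level-$1$ states to arbitrary states, which is immediate from iterating the lemma but is what formally turns the level-$1$ statement into both self-similarity and persistence.
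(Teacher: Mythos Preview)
Your proposal is correct and follows essentially the same argument as the paper: both deduce the form $g^u(v)=Av+b'$ with $b'\in R^n$ by inducting on the word length of $u$ via $(g^x)^y=g^{xy}$ and \Cref{lem:rigid-self-similar}, and then read off self-similarity and persistence of the canonical projection directly from this description.
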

\begin{proof}
Let $g \in R^n \rtimes Q$ be an element that is given by $g(v) = Av+b$ for some $A \in Q$ and $b \in R^n$.
Since $(g^x)^y = g^{xy}$ for all $x,y \in X_{p,n}$, it follows by induction from Lemma~\ref{lem:rigid-self-similar} that for every $u \in X_{p,n}^{\ast}$ there is some $b' \in R^n$ such that the state $g^u$ is given by
\begin{equation}\label{eq:rigid-self-similar}
g^u(v) = Av+b'
\end{equation}
In particular it follows that $g^u \in R^n \rtimes Q$, which implies that $R^n \rtimes Q$ is a self-similar subgroup of $\Aut(\TT_{p,n})$.
To prove the second claim of the corollary, we consider the canonical retraction $r \colon R^n \rtimes Q \rightarrow Q$.
From~\eqref{eq:rigid-self-similar}, it directly follows that $r(g^u) = A$ for every $u \in X_{p,n}^{\ast}$, which completes the proof.
\end{proof}

Recall that we have chosen $p$ to be a prime that does not divide $N$.
Thus $N$ is a unit in $\Z_p$ so that we can think of $\Z[1/N]$ as a subring of $\Z_p$, which is easily seen to be rigid.
Using Lemma~\ref{lem:rigid-self-similar-and-retract}, we can now easily deduce the following.
For convenience, we recall the notation that appeared so far.

\begin{corollary}\label{cor:aff-Z-N-self-sim}
For every finitely generated subgroup $Q$ of $\GL_n(\Q)$, there exists a number $N \in \N$ and a prime $p$ such that $\Z[1/N]^n \rtimes Q \leq \Aff_n(\Z[1/N])$ is a self-similar subgroup of $\Aut(\TT_{p,n})$ for which the canonical map $\Z[1/N]^n \rtimes Q \rightarrow Q$ is a persistent retraction.
\end{corollary}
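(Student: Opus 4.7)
The plan is to realize this as a direct consequence of Lemma~\ref{lem:rigid-self-similar-and-retract} once the correct $N$ and $p$ are chosen. The only work is to ensure that (a) $Q$ sits inside $\GL_n(\Z[1/N])$ for some $N$, (b) there is a prime $p$ so that $\Z[1/N]$ embeds as a subring of $\Z_p$, and (c) this embedding is rigid in the sense of Definition~\ref{def:p-rigid}.

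First, since $Q$ is finitely generated, pick a finite generating set $g_1,\ldots,g_k$ of $Q$. Each matrix $g_i^{\pm 1}$ has finitely many rational entries, and so I may take $N \in \N$ to be any common denominator for all entries of the $g_i$ and their inverses. Then $g_1,\ldots,g_k \in \GL_n(\Z[1/N])$, hence $Q \leq \GL_n(\Z[1/N])$. Next, choose any prime $p$ not dividing $N$; such primes exist in abundance. Because $p \nmid N$, the integer $N$ is a unit in $\Z_p$, so the natural ring homomorphism $\Z[1/N] \to \Z_p$ is well defined, and it is injective since $\Z \hookrightarrow \Z_p$ is.

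The main point to verify is rigidity of the image of $\Z[1/N]$ in $\Z_p$. Given $x \in \Z_p$ with $px \in \Z[1/N]$, write $px = a/N^{\ell}$ for some $a\in \Z$ and $\ell \geq 0$. Since $x \in \Z_p$ we have $v_p(x) \geq 0$ and hence $v_p(a/N^\ell) = v_p(a) \geq 1$ (using $p \nmid N$), so $p \mid a$ in $\Z$. Writing $a = pb$ gives $x = b/N^{\ell} \in \Z[1/N]$, which shows $(p)\Z_p \cap \Z[1/N] \subseteq (p)\Z[1/N]$; the reverse inclusion is obvious. Thus $\Z[1/N]$ is a rigid subring of $\Z_p$.

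With these $N$ and $p$ fixed, Lemma~\ref{lem:rigid-self-similar-and-retract} applied to $R = \Z[1/N]$ and the chosen $Q \leq \GL_n(\Z[1/N])$ immediately yields that $\Z[1/N]^n \rtimes Q$ is a self-similar subgroup of $\Aut(\TT_{p,n})$ and that the canonical retraction $\Z[1/N]^n \rtimes Q \twoheadrightarrow Q$ is persistent. No step here is genuinely hard; the only subtlety worth isolating is the rigidity verification, which is the reason a prime coprime to $N$ must be chosen.
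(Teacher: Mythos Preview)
Your proof is correct and follows essentially the same approach as the paper: choose $N$ so that a finite generating set of $Q$ lies in $\GL_n(\Z[1/N])$, pick a prime $p\nmid N$, and invoke Lemma~\ref{lem:rigid-self-similar-and-retract}. The only difference is that you spell out the rigidity verification for $\Z[1/N]\subset \Z_p$ in detail, whereas the paper dismisses it as ``easily seen'' in the sentence preceding the corollary.
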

\begin{proof}
    Let $S_Q$ be some finite generating set for $Q$. Then there is some $N\in \N$ such that $S_Q\subset \GL_n(\Z[1/N])$ and thus $Q\leq \GL_n(\Z[1/N])$. For this choice of $N$, the assertion is a direct consequence of Lemma~\ref{lem:rigid-self-similar-and-retract}.
\end{proof}

\subsection{Extending linear groups with Baumslag--Solitar groups}\label{subsec:self-similar-extensions}

Our next goal is to show that every finitely generated subgroup $G$ of $\GL_n(\Q)$ arises as a persistent retract of a self-similar group that has the same finiteness properties as $G$.
To this end, we collect some auxiliary observations.
To formulate the first of them, we write $D_n(\Q)=\left\{\diag(\lambda)\mid \lambda\in \Q^{\times}\right\}$ for the subgroup of $\GL_n(\Q)$ with elements the diagonal matrices $\diag(\lambda)$ with all entries equal to $\lambda\in \mathbb{Q}^{\times}$.

\begin{lemma}\label{lem:disjoint-embedding}
For every finitely generated subgroup $G \leq \GL_n(\Q)$ there are natural numbers $m,N \in \N$ and a monomorphism $\iota \colon G \rightarrow \GL_m(\Z[1/N])$ such that $\iota(G) \cap D_m(\Q) = \{\id\}$.
\end{lemma}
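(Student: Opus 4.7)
The plan is to use a minimal block-diagonal padding. First choose $N$: fix a finite generating set $S = \{s_1, \ldots, s_k\}$ of $G$ and take $N$ to be a common multiple of all primes appearing in denominators of entries of the $s_i$ and of all primes dividing numerator or denominator of $\det(s_i)$. Then each $s_i$ has entries in $\Z[1/N]$ and $\det(s_i) \in \Z[1/N]^\times$, so that $s_i, s_i^{-1} \in \GL_n(\Z[1/N])$, and hence $G = \langle S\rangle \leq \GL_n(\Z[1/N])$.

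With $N$ fixed, set $m \defeq n+1$ and define
\[
\iota \colon G \to \GL_{n+1}(\Z[1/N]), \qquad \iota(g) \defeq \diag(g, 1),
\]
interpreting $\diag(g,1)$ as the block-diagonal matrix with blocks $g \in \GL_n(\Z[1/N])$ and $1 \in \GL_1(\Z[1/N])$. Block multiplication gives $\iota(g)\iota(h) = \diag(gh, 1) = \iota(gh)$, and the top-left block recovers $g$, so $\iota$ is an injective homomorphism.

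For the disjointness from scalars, suppose $\iota(g) = \lambda I_{n+1}$ for some $\lambda \in \Q^{\times}$. Reading off the $(n{+}1, n{+}1)$-entry immediately forces $\lambda = 1$; hence $\iota(g) = I_{n+1}$ and therefore $g = I_n$. This shows $\iota(G) \cap D_{n+1}(\Q) = \{\id\}$, as required.

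There is no real obstacle to this argument: the construction is essentially forced once one observes that a single extra trivial coordinate suffices to break scalarity, since any scalar image would have to be the scalar $1$. The only genuine choice is the denominator $N$, which is handled by the standard clearing-denominators procedure for finitely generated linear groups.
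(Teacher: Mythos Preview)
Your proof is correct, and in fact cleaner than the paper's. Both arguments begin identically by clearing denominators to place $G$ inside $\GL_n(\Z[1/N])$, and both then pass to a block-diagonal embedding in one more dimension. The paper, however, uses the map $A \mapsto \diag(A,\det(A)^{-1})$ into $\SL_{n+1}(\Z[1/N])$ and then argues that a scalar matrix in $\SL_{n+1}$ must satisfy $\lambda^{n+1}=1$, whence $\lambda \in \{\pm 1\}$; this forces a parity case split, and when $n+1$ is even the map must be applied a second time to reach $\SL_{n+2}$. Your padding by a constant $1$-block pins any scalar in the image to $\lambda = 1$ immediately from a single matrix entry, eliminating the case analysis entirely. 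The only thing the paper's route buys is that the image lies in $\SL_m$, but this is not used anywhere in the sequel, so your argument is a strict simplification (and always achieves $m = n+1$, whereas the paper sometimes needs $m = n+2$).
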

\begin{proof}
As explained in the last subsection, there exists a natural number $N \in \N$ such that $G \leq \GL_n(\Z[1/N])$.
By applying the morphism
\[
j \colon \GL_n(\Z[1/N]) \rightarrow \SL_{n+1}(\Z[1/N]),\ A \mapsto \begin{pmatrix}
A & 0\\
0 & \det(A)^{-1}
\end{pmatrix}
\]
to $G$ we obtain the subgroup $j(G) \leq \SL_{n+1}(\Z[1/N])$ that is isomorphic to $G$.
Suppose $n+1$ is odd and let $\diag(\lambda) \in \SL_{n+1}(\Z[1/N])$ be the diagonal matrix all of whose diagonal entries are equal to $\lambda$.
Then $\lambda^{n+1} = 1$ and hence $\lambda \in \{-1,1\}$.
If $n+1$ is odd, then it follows that $\lambda = 1$ and hence that $D_m(\Q) \cap \iota(G) = \{\id\}$.
If $n+1$ is even, then we can apply $j$ a second time and continue as before, which completes the proof.
\end{proof}

The utility of Lemma~\ref{lem:disjoint-embedding} for us is that it allows us to restricts ourselves to the case where the given subgroup $G \leq \GL_n(\Q)$ has trivial intersection with the subgroup $K_{n,N}$ of $\mathrm{Aff}_n(\Z[1/N]) = \Z[1/N]^n \rtimes \mathrm{GL}_n(\Z[1/N])$ that is generated by $\Z[1/N]^n$ and the diagonal matrix $\diag(N)$ whose diagonal entries are equal to $N$.

\begin{lemma}\label{lem:finiteness-properties}
The group $K_{n,N}$ is of type $\F_{\infty}$.
\end{lemma}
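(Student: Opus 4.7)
The plan is to recognize $K_{n,N}$ as an ascending HNN extension of $\Z^n$ and deduce the conclusion from standard Bass--Serre theory. The case $N = 1$ is trivial since then $K_{n,N} = \Z^n$, which is of type $\F$, so I assume $N \geq 2$ throughout.

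I would start by unpacking the structure of $K_{n,N}$. The matrix $t := \diag(N) \in \GL_n(\Z[1/N])$ has infinite order and normalizes the translation subgroup $\Z[1/N]^n \leq \Aff_n(\Z[1/N])$ by acting on it as multiplication by $N$. Since $\Z[1/N]^n$ meets $\langle t \rangle$ trivially, this gives
\[
K_{n,N} = \Z[1/N]^n \rtimes \langle t\rangle \cong \Z[1/N]^n \rtimes \Z,
\]
with the generator of $\Z$ acting on $\Z[1/N]^n$ by multiplication by $N$. Setting $A := \Z^n \leq \Z[1/N]^n$, I would then check: (i) $t A t^{-1} = NA \subsetneq A$, so $t$ conjugates $A$ into itself via the injective endomorphism $\phi \colon A \to A$, $v \mapsto Nv$; and (ii) every element of $\Z[1/N]^n$ has the form $t^{-j} w$ with $j \geq 0$ and $w \in A$, since denominators in $\Z[1/N]$ are powers of $N$. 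Together these identify $K_{n,N}$ with the ascending HNN extension $\langle A, t \mid tat^{-1} = \phi(a),\ a \in A\rangle$.

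To finish, I would invoke the Bass--Serre tree $T$ of this HNN extension. The action of $K_{n,N}$ on $T$ is cocompact with one orbit of vertices and one orbit of edges, and all vertex and edge stabilizers are conjugate to $A \cong \Z^n$, a group of type $\F$. Equivalently, a classifying space for $K_{n,N}$ can be built as a finite graph of spaces whose single vertex space is the $n$-torus $(S^1)^n$ and whose single edge space is $(S^1)^n \times [0,1]$, glued at one end by the identity and at the other by the $N^n$-sheeted self-covering induced by $\phi$; this is a finite $(n+1)$-dimensional CW complex. Either viewpoint immediately yields that $K_{n,N}$ is of type $\F$, and in particular of type $\F_\infty$. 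I do not expect any serious obstacle: the only real calculation is the verification of the HNN normal form for elements of $\Z[1/N]^n$, and the rest is a standard application of Bass--Serre theory.
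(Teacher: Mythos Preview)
Your proposal is correct and matches the paper's approach: both identify $K_{n,N}$ with the ascending HNN extension of $\Z^n$ along $v\mapsto Nv$ and then invoke the standard fact that such extensions inherit strong finiteness properties (the paper cites Geoghegan's book for $\F_\infty$, while you build the finite graph-of-spaces $K(\pi,1)$ explicitly, giving the slightly stronger conclusion of type $\F$). The only slip is that elements of $\Z[1/N]^n$ should be written as $t^{-j} w t^{j}$ rather than $t^{-j} w$, but this does not affect the argument.
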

\begin{proof}
Consider the embedding
\[
\alpha \colon \Z^n \rightarrow \Z^n,\ v \mapsto Nv
\]
and its associated ascending $\HNN$-extension $\Z^n \ast_{\alpha}$.
Since $\Z^n$ is of type $\F_{\infty}$, it follows from~\cite[II.7.2 Exercise 3]{Geoghegan08} that $\Z^n \ast_{\alpha}$ is of type $\F_{\infty}$.
Thus the lemma follows from the isomorphism\footnote{For $n=1$ this is the well-known isomorphism $BS(1,N)\cong \Z[1/N]\rtimes \Z$. The case $n > 1$ follows readily from the case $n=1$ by restricting the induced isomorphism $BS(1,N)^n\cong (\Z[1/N]\rtimes \Z)^n$.}
\[
\Z^n \ast_{\alpha}
= \langle \ x_1,\ldots,x_n,t \ | \ [x_i,x_j],\ t x_i t^{-1} = x_i^N \ \forall i,j \ \rangle \rightarrow K_{n,N}
\]
that is induced by $x_i \mapsto e_i$ and $t \mapsto \diag(N)$, where $e_i$ denotes the $i$-th standard unit vector in $\Z^n$.
\end{proof}

\begin{proposition}\label{prop:realizing-G-as-persistent-retract}
Every finitely generated subgroup $G$ of $\GL_n(\Q)$ arises as a persistent retract of a self-similar group that has the same finiteness properties as $G$.
\end{proposition}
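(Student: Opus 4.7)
The plan is to combine the given subgroup $G \leq \GL_n(\Q)$ with the auxiliary group $K_{m,N}$ from Lemma~\ref{lem:finiteness-properties} inside an affine group, so that Lemma~\ref{lem:rigid-self-similar-and-retract} will supply both the self-similar structure and the persistent retraction, while the extra normal factor $K_{m,N}$ of type $\F_{\infty}$ guarantees that the finiteness properties are preserved.

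First I would apply Lemma~\ref{lem:disjoint-embedding} to replace $G$ by an isomorphic subgroup, still denoted $G$, of $\GL_m(\Z[1/N])$ for some $m,N \in \N$ satisfying $G \cap D_m(\Q) = \{\id\}$. Since $\diag(N)$ is a scalar matrix, it is central in $\GL_m(\Q)$ and commutes with $G$, and because $\langle \diag(N) \rangle \cap G \subseteq D_m(\Q) \cap G = \{\id\}$, the subgroup $Q \defeq \langle \diag(N) \rangle \times G$ of $\GL_m(\Z[1/N])$ is a genuine direct product. Choosing a prime $p$ not dividing $N$ as in Corollary~\ref{cor:aff-Z-N-self-sim} makes $\Z[1/N]$ a rigid subring of $\Z_p$, and Lemma~\ref{lem:rigid-self-similar-and-retract} then tells us that $\Gamma \defeq \Z[1/N]^m \rtimes Q \leq \Aut(\TT_{p,m})$ is self-similar and that the canonical projection $\Gamma \to Q$ is a persistent retraction. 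Post-composing with the projection $Q = \langle \diag(N) \rangle \times G \to G$ yields a persistent retraction $r \colon \Gamma \to G$, since the persistence condition survives post-composition with any group homomorphism.

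The kernel of $r$ would be exactly $K_{m,N} = \Z[1/N]^m \rtimes \langle \diag(N) \rangle$, giving a short exact sequence $1 \to K_{m,N} \to \Gamma \to G \to 1$. By Lemma~\ref{lem:finiteness-properties}, $K_{m,N}$ is of type $\F_{\infty}$ and hence of type $\FP_\infty(R)$ for every $R$, so the standard extension argument via the Lyndon--Hochschild--Serre spectral sequence, which applies uniformly to $\F_n$ and $\FP_n(R)$, transfers these finiteness properties from $G$ to $\Gamma$. Conversely, since $G$ is a retract of $\Gamma$, hence a quasi-retract, Theorem~\ref{thm-quasi-re-fin} makes the finiteness properties of $\Gamma$ descend to $G$, so they coincide.

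The main obstacle I expect is the bookkeeping that explains why the naive candidate $\Z[1/N]^m \rtimes G$ is not good enough: although it is already self-similar with a persistent retraction onto $G$ by Lemma~\ref{lem:rigid-self-similar-and-retract}, its abelian normal subgroup $\Z[1/N]^m$ is not finitely generated, so the short exact sequence does not propagate finiteness properties of $G$. Replacing the bare abelian factor by the finitely generated, $\F_{\infty}$ ascending $\HNN$-extension $K_{m,N}$ repairs this, and the disjointness condition $G \cap D_m(\Q) = \{\id\}$ engineered by Lemma~\ref{lem:disjoint-embedding} is precisely what ensures that the enlarged group still has the shape $\Z[1/N]^m \rtimes Q$ required by Lemma~\ref{lem:rigid-self-similar-and-retract}.
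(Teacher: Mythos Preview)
Your proposal is correct and follows essentially the same approach as the paper: both construct the same group $\Gamma = \Z[1/N]^m \rtimes (\langle \diag(N) \rangle \times G)$, use Lemma~\ref{lem:rigid-self-similar-and-retract} for the persistent retraction onto $G$, and the short exact sequence with kernel $K_{m,N}$ together with Lemma~\ref{lem:finiteness-properties} for the finiteness properties. Your presentation is in fact slightly more direct, since the paper first builds $\Gamma$ as $K_{n,N} \rtimes G$ and only afterwards reinterprets it as $\Z[1/N]^n \rtimes H$, whereas you go straight to the latter form; your closing paragraph explaining why the naive candidate $\Z[1/N]^m \rtimes G$ fails is exactly the content of the paper's Remark~\ref{rem:loosing-Fn}.
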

\begin{proof}
Using Lemma~\ref{lem:disjoint-embedding}, we can assume that $G$ lies in $\GL_n(\Z[1/N])$ and satisfies $G \cap D_n(\Q) = \{\id\}$.
As a consequence, the intersection of $G$ and $K_{n,N}$ in $\Aff_n(\Z[1/N])$ is trivial is well.
Since moreover $K_{n,N}$ is a normal subgroup of $\Gamma \defeq \langle K_{n,N},G \rangle$, we obtain a short exact sequence
\[
1 \rightarrow K_{n,N} \rightarrow \Gamma = K_{n,N} \rtimes G \rightarrow G \rightarrow 1.
\]
In particular, $G$ is a retract of $\Gamma$. In view of \Cref{thm-quasi-re-fin}, Lemma~\ref{lem:finiteness-properties} and the fact that the finiteness properties of an extension of groups $A,B$ are bounded from below by the minimum of the finiteness properties of $A$ and $B$, see e.g.~\cite[II.7.2 Exercise 1]{Geoghegan08}, it follows that $\Gamma$ has the same finiteness properties as $G$.
A further consequence of $G \cap D_n(\Q) = \{\id\}$ is that the group $H \defeq \langle G, \diag(N) \rangle$ splits as a direct product
\[
H \cong G \oplus \langle \diag(N) \rangle \cong G \times \Z
\]
since $\diag(N)$ lies in the center of $\GL_n(\Q)$.
By combining that $H\cap (\Z[1/N])^n=\{1\}$ with the fact that the subgroup $\Z[1/N]^n$ of $K_{n,N}$ is normal in $\Aff_n(\Z[1/N])$, we obtain the short exact sequence
\[
1 \rightarrow \Z[1/N]^n \rightarrow \Gamma \rightarrow H \rightarrow 1.
\]
Thus we see that $\Gamma$ is of the form $\Gamma = \Z[1/N]^n \rtimes H$ so that we can apply Lemma~\ref{lem:rigid-self-similar-and-retract} to deduce that it is a self-similar subgroup of $\Aut(\TT_{p,n})$, $p \nmid N$, for which the canonical map $\Gamma = \Z[1/N]^n \rtimes H \rightarrow H$ is a persistent retraction.
Finally, we can compose the latter retraction with the projection $H \cong G \oplus \langle \diag(N) \rangle \rightarrow G$ to obtain a persistent retraction $\Gamma \rightarrow G$.
\end{proof}

\begin{remark}\label{rem:loosing-Fn}
In general, the transition from a subgroup $G$ of $\GL_n(\Z[1/N])$ to the group $\Z[1/N]^n \rtimes G$ can lead to a loss of finiteness properties.
Indeed, if $G$ is the trivial group, then $\Z[1/N]^n \rtimes G$ is not even finitely generated.
\end{remark}

\section{Applications}\label{sec:applications}

We are now ready to prove the results stated in the introduction.
For the convenience of the reader we repeat them here.

\begin{theorem}\label{thm:linear-groups-as-retracts}
Let $H$ be a finitely generated subgroup of $\GL_n(\Q)$ for some $n \in \N$.
There exists a simple group $G$ that has the following properties:
\begin{enumerate}
\item $G$ has the same finiteness properties as $H$,
\item $H$ is a subgroup of $G$,
\item $G$ admits a quasi-retract onto $H$.
\end{enumerate}
\end{theorem}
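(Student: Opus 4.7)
The plan is to chain together the constructions of Sections~\ref{sec:quasi-retracts}--\ref{sec:realizing-persistent-retracts}: first realise $H$ as a persistent retract of a self-similar group $\Gamma$, then apply Zaremsky's modification to obtain a R\"over--Nekrashevych group $V$ with finite abelianization, and finally take $G$ to be the (simple) commutator subgroup of $V$.

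Concretely, I first apply Proposition~\ref{prop:realizing-G-as-persistent-retract} to $H$ to produce a finitely generated self-similar group $\Gamma \leq \Aut(\TT_{p,n})$ together with a persistent retraction $r \colon \Gamma \twoheadrightarrow H$ that shares all finiteness properties with $H$; note that $H$ is in particular a subgroup of $\Gamma$. Feeding $(\Gamma, r)$ into Corollary~\ref{cor:from-pers-retr-to-virt-simpl-and-qr} then yields an integer $m \in \N$ and an embedding $j \colon \Gamma \hookrightarrow \Aut(\TT_{md})$ whose image is self-similar, such that $V := V_{md}(j(\Gamma))$ has finite abelianization and admits $H$ as a quasi-retract. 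Set $G := [V,V]$. By Proposition~\ref{prop:prpty-Vd(G)}(2) the group $G$ is simple, and the finiteness of $V_{\rm ab}$ forces $G$ to have finite index in $V$.

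To verify (1), Proposition~\ref{prop:positive-finiteness-properties} shows that $V$ is of type $\F_n$ (respectively $\FP_n(R)$) whenever $\Gamma$, and hence $H$, is, while Theorem~\ref{thm-quasi-re-fin} applied to the quasi-retract $V \twoheadrightarrow H$ gives the reverse implication; since $G$ has finite index in $V$, it inherits these properties. For (3), the quasi-retract $V \twoheadrightarrow H$ restricts to a quasi-retract $G \to H$ because the inclusion $G \hookrightarrow V$ is a quasi-isometry. For (2), $H$ maps into $V$ through the composition $H \hookrightarrow \Gamma \overset{\iota_1}{\hookrightarrow} V$.

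The most delicate point is property (2): this composition may fail to land inside the simple subgroup $G$, since elements of $\iota_1(H)$ need not be in the kernel of $V \twoheadrightarrow V_{\rm ab}$. The natural workaround is to replace $H$ by its finite-index intersection $H \cap G$, which preserves both the finiteness-properties statement and the quasi-retract conclusion by Theorem~\ref{thm-quasi-re-fin}; alternatively, one can exploit the explicit description in Proposition~\ref{prop:abelianization-Vd(G)} to arrange, perhaps after enlarging $m$, that $\iota_1(H)$ already lies in $\ker(V \twoheadrightarrow V_{\rm ab})$. I expect this embedding issue to be the main obstacle in turning the outline above into a clean proof.
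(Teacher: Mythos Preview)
Your outline matches the paper's proof almost exactly for properties (1) and (3): the same chain Proposition~\ref{prop:realizing-G-as-persistent-retract} $\rightarrow$ Corollary~\ref{cor:from-pers-retr-to-virt-simpl-and-qr} $\rightarrow$ Proposition~\ref{prop:positive-finiteness-properties}/Theorem~\ref{thm-quasi-re-fin} $\rightarrow$ pass to the finite-index simple commutator subgroup $G=[V,V]$.

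The gap is precisely the point you flag for (2), and neither of your workarounds closes it. Replacing $H$ by $H\cap G$ proves only that a finite-index subgroup of $H$ sits inside $G$, which is strictly weaker than the stated conclusion ``$H$ is a subgroup of $G$''. Your second suggestion, enlarging $m$ so that $\iota_1(H)$ lands in $\ker(V\to V_{\rm ab})$, is not substantiated: from Proposition~\ref{prop:abelianization-Vd(G)} the image of $\iota_1(h)$ in $V_{\rm ab}$ is $\pi(h)$ in the quotient of $\Gamma_{\rm ab}$ by relations of the form $\pi(g)=m\sum_i\pi(g_i)$, and there is no reason these relations should kill all of $\pi(H)$ for any choice of $m$.

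The paper resolves (2) by a different and rather slick trick that does not try to control where the original embedding lands. Since $G$ has finite index, say $k$, in $V$, the Kaloujnine--Krasner theorem embeds $V$ (and hence $H$) into the wreath product $S_k\wr G$. One then invokes a result of Zaremsky~\cite[Proposition~2.5]{Zaremsky25embedding} stating that $S_k\wr G$ embeds back into $G$ for R\"over--Nekrashevych-type groups of this form. Chaining these gives $H\hookrightarrow V\hookrightarrow S_k\wr G\hookrightarrow G$. This is the missing ingredient in your argument.
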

\begin{proof}
From Proposition~\ref{prop:realizing-G-as-persistent-retract} we know that there is a self-similar subgroup $\Gamma$ of $\Aut(\TT_d)$ for some $d \in \N$ that has the same finiteness properties as $H$ and that admits a persistent retraction $r \colon \Gamma \rightarrow H$.
In this case, Corollary~\ref{cor:from-pers-retr-to-virt-simpl-and-qr} provides us with an embedding $j \colon \Gamma \rightarrow \Aut(\TT_{m})$ for some $m \in \N$ such that $j(\Gamma)$ is a self-similar subgroup of $\Aut(\TT_{m})$ whose corresponding R\"over--Nekrashevych group $V_{m}(j(\Gamma))$ has finite abelianization and admits a quasi-retraction $\rho \colon V_{m}(j(\Gamma)) \rightarrow H$.
According to Proposition~\ref{prop:positive-finiteness-properties} (resp.~\cite[Section 4]{SkipperWitzelZaremsky19}) the group $V_{m}(j(\Gamma))$ is of type $\FP_n(R)$ for some unital commutative ring $R$ (resp.\ of type $\F_n$) if $j(\Gamma) \cong \Gamma$ is of type $\FP_n(R)$ (resp.\ of type $\F_n$).
On the other hand, Theorem~\ref{thm-quasi-re-fin} tells us that $V_{m}(j(\Gamma))$ is of type $\FP_n(R)$ (resp.\ of type $\F_n$) only if $H$ is of type $\FP_n(R)$ (resp.\ of type $\F_n$).
Since $\Gamma$ has the same finiteness properties as $H$, we can summarize the statements by saying that $V_{m}(j(\Gamma))$ has the same finiteness properties as $H$.

As $V_{m}(j(\Gamma))$ has finite abelianization, it follows from Proposition~\ref{prop:prpty-Vd(G)} that its commutator subgroup $G \defeq [V_{m}(j(\Gamma)),V_{m}(j(\Gamma))]$ is a simple subgroup of finite index.
Thus the inclusion map $i \colon G \rightarrow V_{m}(j(\Gamma))$ is a quasi-isometry so that another application of Theorem~\ref{thm-quasi-re-fin} implies that $G$ and $H$ have the same finiteness properties.
Moreover we can precompose $\rho$ with $i$ to obtain a quasi-retraction $\rho \circ i \colon G \rightarrow H$.
It remains to prove that $H$ is a subgroup of $G$.
Since $G$ has finite index in $V_{m}(j(\Gamma))$ and $H$ is a subgroup of $V_{m}(j(\Gamma))$, it follows from the Kaloujnine--Krasner embedding Theorem~\cite{KaloujnineKrasner48} that it is sufficient to prove that the wreath product $S_k \wr G$ for $k\in \N$ embeds into $G$.
Since the latter was proven by Zaremsky~\cite[Proposition 2.5]{Zaremsky25embedding}, this completes the proof.
\end{proof}

Using Theorem~\ref{thm:linear-groups-as-retracts}, we can now easily prove our main result.

\begin{theorem}\label{thm:BB-type-simple-groups}
For every finite graph $\Gamma$ there is a simple group $G_{\Gamma}$ with the following finiteness properties, where $R\neq 0$ is any commutative unital ring:
\begin{enumerate}
\item $G_{\Gamma}$ is of type $\F_n$ if and only if $\Flag(\Gamma)$ is $(n-1)$-connected;
\item $G_{\Gamma}$ is of type $\FP_n(R)$ if and only if $\Flag(\Gamma)$ is homologically $(n-1)$-connected over $R$;
\item $G_{\Gamma}$ is $\F_\infty$ if and only if $\Flag(\Gamma)$ is contractible;
\item $G_{\Gamma}$ is $\FP_\infty(R)$ if and only if $\Flag(\Gamma)$ is $R$-acyclic.
\end{enumerate}
\end{theorem}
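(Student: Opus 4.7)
The strategy is to apply Theorem~\ref{thm:linear-groups-as-retracts} with $H$ taken to be the Bestvina--Brady group $\BB_{\Gamma}$. First I would recall that $\BB_{\Gamma}$ sits inside the right-angled Artin group $A_{\Gamma}$, which in turn embeds into the right-angled Coxeter group $W_{\Gamma}$ by the Davis--Januszkiewicz construction~\cite{DavisJanuszkiewicz00}; since $W_{\Gamma}$ acts faithfully by the canonical reflection representation with integral cosine matrix, $W_{\Gamma}$, and hence $\BB_{\Gamma}$, is linear over $\Z$ and therefore over $\Q$. In particular $\BB_{\Gamma}$ is a finitely generated subgroup of $\GL_m(\Q)$ for some $m$, so Theorem~\ref{thm:linear-groups-as-retracts} produces a simple group $G_{\Gamma}$ containing $\BB_{\Gamma}$ as a subgroup, admitting a quasi-retract onto $\BB_{\Gamma}$, and having the same homotopical and homological finiteness properties as $\BB_{\Gamma}$.

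The second ingredient is the Bestvina--Brady theorem~\cite{BestvinaBrady97}, which identifies the finiteness properties of $\BB_{\Gamma}$ with the (homological) connectivity of $\Flag(\Gamma)$. Precisely, $\BB_{\Gamma}$ is of type $\F_n$ (resp.\ $\F_{\infty}$) if and only if $\Flag(\Gamma)$ is $(n-1)$-connected (resp.\ contractible), and $\BB_{\Gamma}$ is of type $\FP_n(R)$ (resp.\ $\FP_{\infty}(R)$) if and only if $\Flag(\Gamma)$ is homologically $(n-1)$-connected over $R$ (resp.\ $R$-acyclic). The original proof is stated for $R = \Z$, but the homological argument goes through for an arbitrary nonzero commutative ring $R$ via exactly the same Morse-theoretic computation of the descending links in the universal cover of the Salvetti complex.

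Combining these two inputs gives each of the four equivalences. For the forward direction of (1), if $G_{\Gamma}$ is $\F_n$ then since $\BB_{\Gamma}$ is a quasi-retract of $G_{\Gamma}$, Theorem~\ref{thm-quasi-re-fin} yields that $\BB_{\Gamma}$ is $\F_n$, whence $\Flag(\Gamma)$ is $(n-1)$-connected by Bestvina--Brady. For the backward direction, if $\Flag(\Gamma)$ is $(n-1)$-connected then $\BB_{\Gamma}$ is $\F_n$, so by Theorem~\ref{thm:linear-groups-as-retracts}(1) the group $G_{\Gamma}$ is $\F_n$. The same argument, using the homological half of Theorem~\ref{thm-quasi-re-fin} and the homological half of Theorem~\ref{thm:linear-groups-as-retracts}, yields (2). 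Items (3) and (4) then follow by taking $n \to \infty$, using that $\F_{\infty}$ (resp.\ $\FP_{\infty}(R)$) is equivalent to being $\F_n$ (resp.\ $\FP_n(R)$) for every $n$, and that contractibility (resp.\ $R$-acyclicity) of the finite complex $\Flag(\Gamma)$ is equivalent to being $(n-1)$-connected (resp.\ homologically $(n-1)$-connected over $R$) for every $n$.

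There is essentially no obstacle here beyond verifying that Theorem~\ref{thm:linear-groups-as-retracts} is available in both the homotopical and homological versions for every nonzero commutative ring $R$; this is built into the formulation of the quasi-retract results (Theorem~\ref{thm-quasi-re-fin}) and the positive-direction result Proposition~\ref{prop:positive-finiteness-properties}, both of which are stated uniformly in $R$. The linearity of $\BB_{\Gamma}$ over $\Q$ is the only external fact used, and it is standard.
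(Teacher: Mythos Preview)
Your approach is essentially the same as the paper's, but there is one genuine gap: you assert that $\BB_{\Gamma}$ is a finitely generated subgroup of $\GL_m(\Q)$, and this is false in general. The Bestvina--Brady group $\BB_{\Gamma}$ is finitely generated if and only if $\Flag(\Gamma)$ is connected, so when $\Gamma$ is disconnected you cannot invoke Theorem~\ref{thm:linear-groups-as-retracts}, which has finite generation of $H$ as a hypothesis. (Concretely: if $\Gamma$ is two isolated vertices, $A_{\Gamma}=F_2$ and $\BB_{\Gamma}$ is the kernel of $F_2\to\Z$, an infinite-rank free group.)

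The paper handles this exactly as one would expect: it splits into two cases. If $\Flag(\Gamma)$ is $0$-connected, your argument goes through verbatim. If $\Flag(\Gamma)$ is disconnected, then none of the connectivity conditions in (1)--(4) hold for any $n\ge 1$, so the theorem reduces to the assertion that there exists a simple group which is not finitely generated; this is of course well known (e.g.\ the finitary alternating group on $\N$, or any countable simple group that is not finitely generated). Adding this one-line case distinction repairs your proof and makes it match the paper's.
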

\begin{proof}
By the main theorem of~\cite{BestvinaBrady97}, the Bestvina-Brady group $\BB_{\Gamma}$ is of type $\FP_{n}(R)$ (resp.\ finitely presented) if and only if $\Flag(\Gamma)$ is homologically $(n-1)$-connected (resp.\ simply connected).
Thus, by the Hurewicz theorem, $\BB_{\Gamma}$ is of type $\F_{n}$ if and only if $\Flag(\Gamma)$ is $(n-1)$-connected.
Since $\BB_{\Gamma}$ is a subgroup of a right-angled Artin group $A_{\Gamma}$, which in turn is a subgroup of a right-angled Coxeter group $W_{\Gamma}$, it follows from the integrality of the cosine matrix associated to $W_{\Gamma}$ that $\BB_{\Gamma}$ is linear over $\Z$.
In the case where $\BB_{\Gamma}$ is finitely generated, i.e.\ if $\Flag(\Gamma)$ is $0$-connected, we can apply Theorem~\ref{thm:linear-groups-as-retracts} to deduce that there exists a simple group $G$ that is of type $\F_n$ (resp.\ $\FP_n(R)$) if and only if $\BB_{\Gamma}$ is of type $\F_n$ (resp.\ $\FP_n(R)$).
In the case where $\Gamma$ is not $0$-connected, the claim of the Theorem is that there exists a simple group that is not finitely generated, which is well-known to be the case.
\end{proof}

As a byproduct of the proof of Theorem~\ref{thm:BB-type-simple-groups}, we obtain the first known examples of self-similar groups that are separated by homological finiteness properties.

\begin{proposition}\label{prop:BB-type-selfsim-groups}
For every $n \in \N$ there is a self-similar group of type $\FP_{n}(\mathbb{Z})$ that is neither of type $\FP_{n+1}(\mathbb{Z})$ nor finitely presented.
Moreover there is a self-similar group of type $\FP_{\infty}(\mathbb{Z})$ that is not finitely presented.
\end{proposition}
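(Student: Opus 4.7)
The plan is to extract the self-similar groups constructed along the way in the proof of Theorem~\ref{thm:BB-type-simple-groups}, rather than passing to their R\"over--Nekrashevych groups. The key observation is that Proposition~\ref{prop:realizing-G-as-persistent-retract} already produces, for any finitely generated subgroup $H\leq \GL_n(\Q)$, a self-similar group $\Gamma$ (sitting inside some $\Aut(\TT_{p,n})$) which has the same finiteness properties as $H$ and admits $H$ as a persistent retract. Since we do not need simplicity for the current statement, we can stop at this intermediate stage of the construction.

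For the first assertion, I would choose a finite graph $\Lambda$ with $\Flag(\Lambda)$ connected and homologically $(n-1)$-connected over $\Z$ but neither homologically $n$-connected over $\Z$ nor simply connected; such graphs exist by the well-known constructions underlying~\cite{BestvinaBrady97}. By the main theorem of~\cite{BestvinaBrady97}, the associated Bestvina--Brady group $\BB_{\Lambda}$ is then of type $\FP_n(\Z)$ but not of type $\FP_{n+1}(\Z)$ and not finitely presented. As recalled in the proof of Theorem~\ref{thm:BB-type-simple-groups}, the group $\BB_{\Lambda}$ embeds into a right-angled Coxeter group $W_{\Lambda}$ whose cosine matrix is integral, and hence $\BB_{\Lambda}\leq \GL_m(\Z)\leq \GL_m(\Q)$ for a suitable $m\in \N$. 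Applying Proposition~\ref{prop:realizing-G-as-persistent-retract} with $H=\BB_{\Lambda}$ yields a self-similar group $\Gamma_n$ with the same finiteness properties as $\BB_{\Lambda}$, which gives the desired example.

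For the second assertion, I would instead choose $\Lambda$ so that $\Flag(\Lambda)$ is a finite acyclic but not simply connected $2$-complex; the existence of such complexes is again recalled just before Corollary~\ref{introcor:BB-type-simple-groups-n-not-n+1}. Then Bestvina--Brady implies that $\BB_{\Lambda}$ is of type $\FP_{\infty}(\Z)$ but not finitely presented, and applying Proposition~\ref{prop:realizing-G-as-persistent-retract} once more produces a self-similar group with the same finiteness properties.

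There is no real obstacle here beyond invoking the already-established pieces: the only subtle point is that Proposition~\ref{prop:realizing-G-as-persistent-retract} genuinely preserves both $\FP_n(\Z)$ and the failure of finite presentability, which is built into the statement that $\Gamma$ has the \emph{same} finiteness properties as $H$ (cf.\ Remark~\ref{rem:loosing-Fn}, which indicates that one has to be careful in general, but the self-similar extension used there is precisely designed so that no finiteness properties are gained or lost at this step). Everything else is a direct appeal to Bestvina--Brady and to the existence of the relevant flag complexes.
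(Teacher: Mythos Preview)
Your proposal is correct and follows essentially the same approach as the paper: apply Proposition~\ref{prop:realizing-G-as-persistent-retract} to suitable Bestvina--Brady groups, using that they are linear over $\Q$ and have the desired finiteness properties. The paper's proof is simply a terser version of what you wrote.
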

\begin{proof}
From Proposition~\ref{prop:realizing-G-as-persistent-retract} it follows that for every finitely generated subgroup $G$ of $\GL_n(\Q)$ there is a self-similar group that has the same finiteness properties as $G$.
Thus the claim follows from the existence of Bestvina-Brady groups with the desired finiteness properties and the fact that Bestvina-Brady groups are linear over $\Q$, see the proof of Theorem~\ref{thm:BB-type-simple-groups}.
\end{proof}

\begin{corollary}\label{cor:higher-dehn-fct-general}
Every (high-dimensional) Dehn function of a subgroup of $\GL_n(\Q)$ arises as a lower bound on the corresponding Dehn function of a simple group with the same finiteness properties.
\end{corollary}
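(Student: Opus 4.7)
The plan is to combine Theorem~\ref{thm:linear-groups-as-retracts} with the quasi-retract inequality for higher-dimensional Dehn functions recorded in Theorem~\ref{thm:quasi-retract-dehnfunc}. Fix a finitely generated subgroup $H \leq \GL_n(\Q)$ and let $k \in \N$ be such that the $k$-dimensional Dehn function $\delta^{(k)}_H$ is defined, i.e.\ $H$ is of type $\F_{k+1}$.

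First, I would apply Theorem~\ref{thm:linear-groups-as-retracts} to produce a simple group $G$ that has the same finiteness properties as $H$, contains $H$ as a subgroup, and admits a quasi-retraction $\rho \colon G \to H$ with respect to word metrics coming from fixed finite generating sets. Because $G$ and $H$ share the same finiteness properties, $G$ is in particular also of type $\F_{k+1}$, so that $\delta^{(k)}_G$ is well-defined as well.

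Second, I would feed the quasi-retraction $\rho$ into Theorem~\ref{thm:quasi-retract-dehnfunc}. Its hypotheses are met: both $G$ and $H$ are of type $\F_{k+1}$, and $H$ is a quasi-retract of $G$ by construction. The conclusion is precisely $\delta^{(k)}_H \preceq \delta^{(k)}_G$, which is the desired lower bound. Note that the same simple group $G$ is obtained from Theorem~\ref{thm:linear-groups-as-retracts} independently of $k$, so it simultaneously witnesses the lower bound for every dimension $k$ for which $\delta^{(k)}_H$ is defined.

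There is no real obstacle to this argument: both key inputs have already been set up in the preceding sections, and the corollary is simply the combination of the structural result (Theorem~\ref{thm:linear-groups-as-retracts}) with the quasi-isometric monotonicity of (higher) Dehn functions (Theorem~\ref{thm:quasi-retract-dehnfunc}). The only minor verification is that the $\F_{k+1}$ hypothesis of Theorem~\ref{thm:quasi-retract-dehnfunc} is satisfied for $G$, and this is immediate from clause~(1) of Theorem~\ref{thm:linear-groups-as-retracts}.
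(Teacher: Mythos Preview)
Your proposal is correct and matches the paper's own proof, which simply states that the corollary is a direct consequence of Theorem~\ref{thm:linear-groups-as-retracts} and Theorem~\ref{thm:quasi-retract-dehnfunc}. You have merely unpacked this one-line argument with the obvious verifications.
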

\begin{proof}
This is a direct consequence of Theorem~\ref{thm:linear-groups-as-retracts} and Theorem~\ref{thm:quasi-retract-dehnfunc}.
\end{proof}

\begin{corollary}\label{cor:higher-dehn-fct}
For every finite subset $I \subseteq \N$, there is a simple group of type $\F_{\infty}$ whose $n$-dimensional Dehn function is at least exponential for every $n \in I$.
\end{corollary}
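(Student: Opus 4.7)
The plan is to apply Corollary \ref{cor:higher-dehn-fct-general} to a carefully chosen finitely generated subgroup $H \leq \GL_m(\Q)$ that is of type $\F_\infty$ and has $n$-dimensional Dehn function bounded below by an exponential for every $n \in I$. Everything then follows mechanically from the tools already established.

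For each $n \in I$, I would fix a finitely generated subgroup $H_n \leq \GL_{k_n}(\Q)$ of type $\F_\infty$ whose $n$-dimensional Dehn function is bounded below by an exponential function. The existence of such an $H_n$ is precisely the input recorded in the paragraph preceding the corollary, namely that ``exponential functions also arise as high-dimensional Dehn functions of linear groups over $\Q$''; for the base case $n=1$ one may take the Baumslag--Solitar group $\BS(1,2) \leq \GL_2(\Q)$, whose $\F_\infty$-ness and exponential Dehn function are classical.

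Next, I would consider the finite direct product $H := \prod_{n \in I} H_n$, which embeds block-diagonally as a finitely generated subgroup of $\GL_{\sum_{n\in I} k_n}(\Q)$. Being a finite product of $\F_\infty$-groups, $H$ is of type $\F_\infty$. Moreover, for every $n \in I$ the canonical projection $H \twoheadrightarrow H_n$ is a retraction, so Theorem \ref{thm:quasi-retract-dehnfunc} yields
\[
\delta^{(n)}_{H_n} \preceq \delta^{(n)}_{H},
\]
and hence $\delta^{(n)}_H$ is at least exponential for every $n \in I$.

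Finally, I would feed $H$ into Corollary \ref{cor:higher-dehn-fct-general} to obtain a simple group $G$ with the same finiteness properties as $H$, in particular of type $\F_\infty$, such that $\delta^{(n)}_H \preceq \delta^{(n)}_G$ for every $n$ in the range under consideration. Combining the two inequalities gives the exponential lower bound on $\delta^{(n)}_G$ for each $n \in I$. The only nontrivial step is the first one, where the existence of the $H_n$ is taken from the literature on Dehn functions of linear groups over $\Q$; the remainder of the argument is a purely formal packaging, using products of linear groups and the quasi-retract property from Theorem \ref{thm:linear-groups-as-retracts}.
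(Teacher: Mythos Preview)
Your proposal is correct and follows essentially the same route as the paper: take a product of linear groups over $\Q$, one for each $n\in I$, use the retractions onto the factors together with Theorem~\ref{thm:quasi-retract-dehnfunc}, and then apply Corollary~\ref{cor:higher-dehn-fct-general}/Theorem~\ref{thm:linear-groups-as-retracts}. The only substantive difference is that the paper makes the input explicit by taking $H_n = \SL_{n+2}(\Z)$, invoking Leuzinger--Young for the exponential $n$-dimensional Dehn function and Raghunathan for type $\F_\infty$, whereas you defer to the sentence in the introduction; supplying such a concrete choice would close that small gap.
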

\begin{proof}
For each $n \in \N$, let $G_n \leq \GL_n(\Q)$ be a subgroup whose $n$-dimensional Dehn function is exponential.
By a result of Leuzinger and Young~\cite{LeuzingerYoung21} examples of such groups are given by $\SL_{n+2}(\Z)$.
Then the product $H \defeq \prod \limits_{n \in I} G_n$ is a subgroup of $\GL_N(\Q)$ for an appropriate $N \in \N$.
From Theorem~\ref{introthm:linear-groups-as-retracts} we know that there is a simple group $G$ with the same finiteness properties as $H$ that admits a quasi-retract onto $H$.
Since each of the groups $\SL_{n+2}(\Z)$ is of type $\F_{\infty}$~\cite{Raghunathan1968} it follows that $G$ is of type $\F_{\infty}$.
By precomposing the canonical projection $H \rightarrow G_n$ with the quasi-retraction $G \rightarrow H$, we obtain a quasi-retract from $G$ onto $G_n$ for each $n \in I$.
In this case Corollary~\ref{cor:higher-dehn-fct-general} tells us that the $n$-dimensional Dehn function of $G$ is bounded from below by the $n$-dimensional Dehn function of $G_n$, which completes the proof.
\end{proof}

\bibliographystyle{alpha}
\bibliography{literature.bib}

\end{document}